\documentclass[12pt,reqno,twoside]{amsart}

\title
[Approximation for systems of linear forms via dynamics]
{Metric Diophantine approximation for systems of linear forms via dynamics}
\author{Dmitry Kleinbock}
\address{Brandeis University, Waltham MA
02454-9110 {\tt kleinboc@brandeis.edu}}
\author{Gregory Margulis}
\address{Yale University, 
   New Haven, CT 06520 {\tt margulis\@math.yale.edu}}
\author{Junbo Wang}
\address{Brandeis University, Waltham MA 02454-9110
{\tt junbo@brandeis.edu}}
\keywords{Simultaneous Diophantine approximation, strong extremality, homogeneous flows, quantitative non-divergence}
\subjclass{11J13; 37A17}
\usepackage{amsmath}
\usepackage{amssymb}
\usepackage{epsfig}
\usepackage{color}

\font\sn = cmssi8 scaled \magstep0

\newif\ifdraft\drafttrue

\font\sn = cmssi8 scaled \magstep0

\draftfalse

\newcommand\name[1]{\label{#1}{\ifdraft{\sn [#1]}\else\ignorespaces\fi}}
\newcommand\bname[1]{{\ifdraft{\sn [#1]}\else\ignorespaces\fi}}

\newcommand\eq[2]{{\ifdraft{\ \tt [#1]}\else\ignorespaces\fi}\begin{equation}
\label{eq: #1}{#2}\end{equation}}

\newcommand{\under}[2]{\underset{\text{#1}}{#2}}

\oddsidemargin=5mm \evensidemargin=-10mm \textwidth=5.5in
\topmargin=0mm
\voffset=-13mm
\headheight=5mm \headsep=9mm
\textheight=9.1in \footskip=5mm

\numberwithin{equation}{section}

\newcommand\df{\stackrel{\mathrm{def}}{=}}

\newcommand{\R}{{\mathbb{R}}}
\newcommand{\Z}{{\mathbb{Z}}}
\newcommand{\br}{{\mathbb{R}}}
\newcommand{\bz}{{\mathbb{Z}}}

\newcommand{\N}{{\mathbb{N}}}

\newcommand{\vv}{{\bf{v}}}

\newcommand{\SL}{\operatorname{SL}}

\newcommand{\ggm}{G/\Gamma}

\newcommand{\Id}{\operatorname{Id}}
\newcommand{\diag}{\operatorname{diag}}



\newcommand{\x}{{\bf x}}
\newcommand{\vy}{{\bf y}}
\newcommand{\vx}{{\bf x}}
\newcommand{\vu}{{\bf u}}
\newcommand{\vd}{{\bf d}}

\newcommand{\ve}{{\bf e}}

\newcommand{\vf}{{\bf f}}
\newcommand{\vz}{{\bf z}}

\newcommand{\vp}{{\bf p}}
\newcommand{\vt}{{\bf t}}

\newcommand{\vq}{{\bf q}}
\newcommand{\til}{\widetilde}
\newcommand{\supp}{{\rm supp}}

\newcommand{\T}{{\mathbf{t}}}

\newcommand{\sm}{\smallsetminus}
\newcommand{\vre}{\varepsilon}
\newcommand\vwa{very well approximable}
\newcommand\vwma{very well multiplicatively  approximable}
\newcommand\da{Diophantine approximation}

\newcommand{\norm}[1] {\left\|{#1}\right\|}
      
\newcommand {\equ}[1]     {\eqref{eq: #1}}

\newcommand\ssm{\smallsetminus}
\newcommand\spr{Sprind\v zuk}
\newcommand\cag{$(C,\alpha)$-good}
\newcommand\nz{\smallsetminus \{0\}}
\newcommand\di{Diophantine}
\newcommand\de{Diophantine exponent}
\newcommand\hd{Hausdorff dimension}

\newcommand\dt{Dirichlet's Theorem}

\newcommand{\DI}{{\mathrm{DI}}}

\newcommand{\fa}{{\mathcal A}}

\newcommand{\ft}{{\mathcal T}}
\newcommand{\fs}{{\mathcal S}}
\newcommand{\fr}{{\mathcal R}}
\newcommand{\fw}{{\mathcal W}}
\newcommand{\const}{\operatorname{const}}

\newtheorem{thm}{Theorem}[section]
\newtheorem{lem}[thm]{Lemma}
\newtheorem{prop}[thm]{Proposition}
\newtheorem{cor}[thm]{Corollary}

\newcommand {\ignore}[1]  {}

\newcommand\mr{M_{m,n}}

\newcommand\amr{$Y\in M_{m,n}$}
\newcommand{\Span}{\operatorname{Span}}

\newcommand\vrn{\varnothing}

\newcommand{\vw}{{\bf{w}}}

\date{July 2009}

\begin{document}

     \begin{abstract}
The goal of this paper is to 
generalize the main results of \cite{KM} and subsequent
papers on metric \da\ with dependent quantities to the 
set-up of systems of linear forms.  In particular, we establish
`joint strong extremality' of arbitrary finite collection 
of smooth nondegenerate submanifolds of $\br^n$.
The  proofs are based on generalized
quantitative nondivergence estimates for translates of measures 
on the space
of lattices. 
\end{abstract}

\maketitle


\section{Introduction}
\label{intro}
The 
theory of simultaneous \da\ 
is concerned with the
following question: if $Y$ is an $m\times n$ real matrix (interpreted as a  system of
$m$
linear forms in $n$ variables),  how small, in terms
of the size of $\vq\in\bz^n$,  can be the distance from $Y\vq$ to  
$\bz^m$. This generalizes the classical theory of approximation of real numbers
by rationals, where $m = n = 1$. 

In the case of a single linear form ($m = 1$), or, dually, a single vector ($n = 1$),
significant progress has been made during recent years in showing that some important
approximation properties of vectors/forms happen to be generic with respect to certain
measures other than  Lebesgue measure. 
This circle of problems  dates back to the 1930s,
namely, to Mahler's work on 
transcendental 
numbers. 
In order to describe  more precisely Mahler's original problem, as well as 
subsequent results and conjectures,  let us introduce
some standard notions from the theory of \da.

Denote by $\mr$ the space of real matrices with  $m$ rows and $n$
columns. 
It follows from
\dt\ 
on simultaneous 
approximation 
\ignore{
states that for any \amr\ (viewed as a system
of $m$ linear forms in $n$ variables) 
and for any 
$t > 0$ there exist
$\vq = (q_1,\dots,q_n) \in \Z^n\nz$ and 
$\vp = (p_1,\dots,p_m)\in \Z^m$ satisfying the following system of inequalities:
\eq{dt}{
\|Y\vq - \vp\|
< e^{-t/m}
  \ \ \ \mathrm{and}  
\ \ \|\vq\|
\le e^{t/n}
\,.}
Consequently, 
}
that
for
any \amr\ there are infinitely many
  $\vq\in\bz^n$ such that
$\|Y\vq - \vp\| < \|\vq\|^{ - n /
m }$
 for some 
 $\vp\in\Z^m $
(here  $\|\cdot\|$ 
is given by 
$\|\x\| = \max_{i }|x_i|$.) 
On the other hand, if $\delta > 0$, the set of \amr\ such that 
there exist infinitely many
  $\vq\in\bz^n$ with
\eq{vwa}{
\|
Y\vq - \vp\| < \|\vq\|^{ - n /
m - \delta}\text{ for some }\vp\in\Z^m}
is null with respect to  Lebesgue measure $\lambda$. 
One 
 says that $Y$ is
{\sl \vwa\/} (abbreviated by VWA) if \equ{vwa} holds
for some positive $\delta$ and infinitely many   $\vq\in\bz^n$.
It follows  that 
the
set of VWA matrices has zero Lebesgue measure.
However its  \hd\ is equal to the dimension of $\mr$ \cite{Dodson}, so in this sense this set is rather big. 
Note also that by Khintchine's
Transference Principle, see e.g.~\cite[Chapter V]{Cassels}, 
$Y$ is VWA 
iff 
so is the transpose of $Y$. 

Let us now turn to a conjecture made by Mahler \cite{mahler}
 in 1932 and proved three decades
later  by \spr, see
 \cite{Sprindzuk-original, Sprindzuk-Mahler}. It states
 that 
 \ignore{given any $\delta > 0$, the set of
$x\in
\br$ such that
\eq{mahler}{
|P(x)| < h_P^{-(n+\delta)} \quad\text{for infinitely many
}P\in\bz[x]\quad\text{with }\deg P\le n}
has Lebesgue measure zero. Here
for a polynomial $$P(x) = a_0 + a_1x + \cdot + a_nx^n\in\bz[x]\,,$$
$h_P$ stands for the {\sl height\/} of $P$, that is, $h_P \df\max_{i =
0,\dots,n}|a_i|$.  Using the definition of the preceding
paragraph, it is not hard to see that Mahler's
Conjecture amounts to saying that}for $\lambda$-almost every
$x\in\br$,
the row vector $\vf(x) = (x,x^2,\dots,x^n)$ 
is not
VWA.
Sprind\v zuk's  proof of the above conjecture 
has led to the development of a new branch of number theory, 
the so-called `\da\ with dependent quantities'.  One of the goals of the theory has been showing that   certain smooth 
maps $\vf$ from open subsets of $\br^d$ to $\br^n$ are, in the terminology
introduced by Sprind\v zuk,
{\sl extremal\/}, that is, 
vectors\ \  $\vf(\vx)$ are not VWA
for $\lambda$-a.e.\ $\vx$ (the reader is referred to  \cite{BD}
for history and references).
Thus it seems  natural to
propose the following general 
problem: exhibit sufficient conditions
on a measure $\mu$ on $\mr$ (for example of the form $F_*\lambda$
where $F$ is a smooth map from an open subset of $\br^d$ to $\mr$)
guaranteeing that $\mu$ is {\sl extremal\/}, which by definition means that
$
\text{$\mu$-a.e.\ \amr\ is not VWA}
.
$
When $\mu = F_*\lambda$ for $F:\br^d\to \mr$, one can interpret this problem as studying $m$ maps $\br^d\to\br^n$ 
(rows of $F$)
simultaneously. Some special cases  were done by Kovalevskaya in the 1980s,
who used the terminology `jointly extremal' for the rows (or columns) 
of $F$ for which $F_*\lambda$ is extremal.

\medskip

The present paper, among other things, suggests possible solutions to this problem.  In fact 
this will be done in a stronger, multiplicative way. 
 For $\vx = (x_i)
 $ we let 
$$
\Pi(\vx) \df \prod_{i} |x_i|\quad \text{ and }\quad\Pi_{+}(\vx) \df
\prod_{i } \max(|x_i|, 1)\,.
$$
Then say that \amr\ is {\sl \vwma\/} (VWMA) if for some $\delta > 0$ there are infinitely many $\vq\in \Z^n$
such that \eq{hom}{ \Pi(Y\vq  -\vp) < \Pi_{+}(\vq)^{-(1+\delta)}
} for some $\vp\in\Z^m$.
Since $\Pi( Y\vq - \vp)$ is always not greater than $\|Y\vq - \vp\|^m$ and
$\Pi_{+}(\vq)
\le
\|\vq\|^n$ for
$\vq\in\bz^n\nz$, VWA implies VWMA. Still it can be easily shown that  Lebesgue-a.e.\ $Y$ is not VWMA\footnote{Also, generalizing Khintchine's Transference Principle one can show that $Y$ is  VWMA
iff so is the transpose of $Y$,
see a remark at the end of \S\ref{dioph}.}. 
Therefore one can ask for stronger sufficient conditions
on a measure $\mu$ on $\mr$ 
guaranteeing that it is {\sl strongly extremal\/}, that is,
$
\text{$\mu$-a.e.\ \amr\ is not VVWA}
.
$

An approach to this class of problems based on homogeneous dynamics
was developed in the paper \cite{KM},  which dealt with the case
$m = 1$. The problem of extending that approach to the matrix set-up
was  raised in \cite[\S 6.2]{KM} and then in \cite[\S9.1]{Gorodnik}. To state the main result of \cite{KM}, which  verified a conjecture
made by \spr\ in \cite{Sprindzuk-Uspekhi}, let us recall the following definitions. 
A smooth  map $\vf$ from  $U\subset\R^d$ to $\R^n$  is called 
{\sl $\ell$-nondegenerate at\/}  $\x\in U$ if   partial derivatives of
$\vf$ at $\x$
up to order $\ell$ 
span $\R^n$. We will say that  $\vf$  is {\sl nondegenerate at\/}  $\x$
if it is $\ell$-nondegenerate at $\x$ for some $\ell$, and that it is  {\sl nondegenerate\/}
if it is nondegenerate at $\lambda$-a.e.\ $\x\in U$. Here is 
the statement of  \cite[Theorem A]{KM}:

\begin{thm}\name{thm: strex} Let $\vf$ be a smooth nondegenerate
 map
from an
 open subset $U$ of $\br^{d}$ to $\br^n$. Then $\vf_*\lambda$  is 
 strongly extremal. \end{thm}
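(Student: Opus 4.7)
The plan is to recast the Diophantine condition dynamically, prove a quantitative nondivergence bound for the relevant family of translates, and then conclude by a Borel--Cantelli argument, following the strategy introduced in \cite{KM}.

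First I would set up the Dani-style correspondence. Let $\Omega = \SL_{n+1}(\br)/\SL_{n+1}(\bz)$, the space of unimodular lattices in $\br^{n+1}$. For $\vy\in\br^n$ let $u_\vy$ denote the upper-triangular unipotent matrix with top row $(1,\vy)$; for $\vt=(t_1,\dots,t_n)$ with $t_i\ge 0$ and $s=\sum_i t_i$, let $g_\vt = \diag(e^{s},e^{-t_1},\dots,e^{-t_n})$. A direct unpacking of \equ{hom} shows that $\vy\in\br^n$ is VWMA iff there exist $\gamma>0$, a divergent sequence $\vt^{(k)}$, and nonzero $\vv_k\in g_{\vt^{(k)}}u_\vy\bz^{n+1}$ with $\|\vv_k\|\le e^{-\gamma s_k}$. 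Thus it suffices to show that for $\lambda$-a.e.\ $\vx\in U$, no such sequence exists for $\vy=\vf(\vx)$.

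The heart of the argument is a quantitative nondivergence estimate: for every ball $B\subset U$ centered at a point of nondegeneracy (after shrinking and rescaling appropriately) there exist $C,\alpha>0$ such that
\[
\lambda\bigl\{\vx\in B : g_\vt u_{\vf(\vx)}\bz^{n+1}\text{ contains a nonzero vector of norm }\le\vre\bigr\}\le C\vre^\alpha\lambda(B)
\]
uniformly in $\vt$, for all sufficiently small $\vre$. To establish this I would apply the Margulis-type nondivergence machinery of \cite{KM}, which requires two ingredients about the family of maps $\vx\mapsto g_\vt u_{\vf(\vx)}\bz^{n+1}$: (i) the $(C,\alpha)$-good property on $B$ for functions of the form $\vx\mapsto\|g_\vt u_{\vf(\vx)}\vw\|$ where $\vw$ runs over primitive vectors of $\wedge^j\bz^{n+1}$ (via Pl\"ucker coordinates), and (ii) a lower bound, uniform on $B$, for the supremum of such functions. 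Both facts reduce, via the explicit coordinate computation of $g_\vt u_{\vf(\vx)}$ acting on wedge products, to statements about smooth real-valued functions whose coordinate components are polynomials in the entries of $\vf(\vx)$ with coefficients depending on $\vt$.

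The main obstacle, and the step where nondegeneracy enters crucially, is verifying (i) and (ii). The key observation is that $\ell$-nondegeneracy of $\vf$ at $\vx_0$ implies, after a compactness argument on a small ball $B\ni\vx_0$, that any nonzero linear combination of the relevant Pl\"ucker coordinate functions has sup-norm on $B$ comparable to the maximum of its coefficients, and moreover is $(C,\alpha)$-good on $B$ with constants depending only on $\vf$ and $B$. This reduces to showing that the finite-dimensional space of functions arising from $\vf$ and its wedge extensions is ``good'' in a uniform sense, which follows from the standard fact that polynomials of bounded degree are $(C,\alpha)$-good with uniform constants plus a perturbation argument off the polynomial case.

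Given the nondivergence estimate, the conclusion is routine: for each ball $B$ as above, the set of VWMA points in $B$ is contained in the lim-sup over $\vt$ in a countable dense sequence of the sets $\{\vx\in B:\exists\,0\ne\vv\in g_\vt u_{\vf(\vx)}\bz^{n+1},\ \|\vv\|\le e^{-\gamma s}\}$. Summing the measure bound $Ce^{-\gamma\alpha s}\lambda(B)$ over this sequence yields a convergent series, so Borel--Cantelli finishes the proof on $B$; covering the set of points of nondegeneracy by countably many such balls concludes.
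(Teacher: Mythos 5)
Your proposal is correct and follows essentially the same route as the paper: the Dani-type correspondence between VWMA and linear growth of $g_\fa$-orbits (Proposition \ref{KM - rate of escape condition}(b)), the quantitative nondivergence estimate of Theorem \ref{thm: friendly nondivergence} with its two hypotheses supplied by the $(C,\alpha)$-good property of linear combinations of $1,f_1,\dots,f_n$ and by nonplanarity-plus-compactness (Lemma \ref{lem: nonpl}), followed by Borel--Cantelli over a countable set of $\vt$'s. The one step you compress that the paper spells out is the verification that for every primitive $\vw$ some $g_\vt$-\emph{non-contracted} coordinate of $u_{\vf(\cdot)}\vw$ is a nontrivial integer combination of $1,f_1,\dots,f_n$ (Proposition \ref{prop: nonzero}); for $m=1$ this is a short computation, so the outline stands.
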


\ignore{
Another theme of the present paper is a generalization of a  result from \cite{KW} on improvement of Dirichlet's Theorem. Given  $0 < \vre < 1$, 
say that 
Dirichlet's Theorem {\sl can be $\vre$-improved\/} for \amr, 
and write $Y\in\DI_\vre(m,n)$, or  $Y\in\DI_\vre$ when the
dimensionality is clear from the context, 
if 
for every sufficiently
large $t$  one can find
$\vq  \in \Z^n\nz$ and 
$\vp \in \Z^m$ with
$$\|Y\vq - \vp\|
 < \vre e^{-t/m} 
  \ \ \ \mathrm{and}  
\ \ \|\vq\|
 < \vre  e^{t/n} 
\,,$$
i.e., satisfy \equ{dt} with the right hand side terms 
 multiplied by $\vre$.  
One also says that 
$Y$ is   {\sl singular\/} if 
$Y\in\DI_\vre$
for any $\vre > 0$. The latter termonology was introduced
by  Khintchine who showed 
that Lebesgue-a.e.\ \amr\ is not singular.
Then   Davenport and Schmidt \cite{Davenport-Schmidt2}
proved that for any
$m,n\in\N$ and any $\,\vre < 1$, 
the sets  $\DI_\vre(m,n)$ 
have Lebesgue measure zero.
In another paper \cite{Davenport-Schmidt},  
they considered 
row
matrices
${\vf(x) = \begin{pmatrix}x & x^2\end{pmatrix}\in
M_{1,2}}$
and showed that
for any $\,\vre < 4^{-1/3}$, the set of $x\in \R$ for which 
$\vf(x)\in\DI_\vre(1,2)$ has zero Lebesgue measure. 
This  was subsequently extended by Baker, Bugeaud and others; namely, for some other smooth submanifolds of  $\R^n$ they exhibited
constants $\vre_0$ such that almost no points on these submanifolds
(viewed as row or column matrices) are in $\DI_\vre
$ for $\vre < \vre_0$. More details on the history of the subject can be found in \cite{KW, nimish}.

In the present paper we extend these and other results
of this flavor   to measures on $\mr$ with $\min(m,n) > 1$. Similarly
to the above discussion on strong extremality and following \cite{KW,  nimish mult}, we will  do it in a   multi-parameter setting. 
It will be convenient to use
\eq{sumequal}{
\fa \df\big\{ \vt = (t_1,\dots,t_{m+n})\in \R_+^{m+n} : 
\sum_{i = 1}^m t_i =\sum_{j = 1}^{n} t_{m+j} 
} 
as the set of parameters.
Generalizing Dirichlet's Theorem\footnote{In \cite{nimish mult} this generalization is
referred to as Dirichlet-Minkowski Theorem.}, it is easy to see that for any system  of  linear forms $Y_1,\dots,Y_m$
(rows of \amr)
and for any $\,\vt \in\fa
$
there exist solutions $\vq  = (q_1,\dots,q_n)\in \Z^n\nz$ and 
$\vp = (p_1,\dots,p_m) \in \Z^m$ 
of
\eq{mdt}{
\begin{cases}
|Y_i\vq - p_i| < e^{-t_i}\,,\quad &i = 1,\dots,m
 \\  
\ \ |q_j| \le e^{t_{m+j}}\,,\quad &j = 1,\dots,n
\,.
\end{cases}}
Now, given an unbounded subset $\mathcal{T}$ of $\fa
$
and positive $\vre < 1$, say that
{\sl Dirichlet's Theorem can be $\vre$-improved for $Y$  along\/} $\mathcal{T}$, 
or $Y\in\DI_\vre(\mathcal{T})$,  
 if there is $T$ such that for every  $\vt =
(t_1,\dots,t_{m+n})\in\mathcal{T}$ with $t > T$, 
the  inequalities 
$$
\begin{cases}
|Y_i\vq - p_i| < \vre e^{-t_i}\,,\quad &i = 1,\dots,m
 \\  
\ \ |q_j| < \vre e^{t_{m+j}}\,,\quad &j = 1,\dots,n
\,.
\end{cases}
$$
i.e., \equ{mdt} with the right hand side terms 
 multiplied by $\vre$,
 have nontrivial integer solutions. Clearly
$
\DI_\vre
 =  \DI_\vre(\fr
)
$. 
We will say that $Y$ is {\sl singular} along  $\mathcal{T}$ \cite{sing} if $Y\in \cap_{\vre > 0}\DI_\vre(\mathcal{T})$.

Using Lebesgue's  Density Theorem and an elementary argument  
from   \cite[Chapter V, \S 7]{Cassels} 
dating back to  Khintchine, one can show that 
for any $m,n$ and any unbounded $\mathcal{T}\subset\fa
$,  
$\DI_\vre(\mathcal{T})$ has Lebesgue 
measure zero as long as $\vre < 1/2
$. 
In \cite{KW} a similar statement, with $1/2$ replaced by
some positive $\vre_0$, was proved   for other  measures on $\R^n$. In particular 
the following analogue of Theorem \ref{thm: strex} was established:

\begin{thm}\name{thm: di}
For any smooth nondegenerate
 map $\vf$
from an
 open subset $U$ of $\br^{d}$ to $\br^n$ there exists
$\vre_0$ such that for any  unbounded  
$\mathcal{T}\subset\fa$ one has
\eq{usualdt}{ \vf_*\lambda\big(\DI_\vre(\mathcal{T})\big) = 0\  \ \forall \,\vre < \vre_0
}
 \end{thm}

In particular, for any  unbounded  
$\mathcal{T}\subset\fa$ the set of vectors which are singular along $\ft$ 
is $\vf_*\lambda$-null,a result proved earlier in \cite{sing}. 
We remark that recently N.\ Shah \cite{nimish, nimish mult} showed that when $\vf$ is real analytic and under some
restrictions on $\ft$, \equ{usualdt} holds with $\vre_0 = 1$. 
\medskip
}

The goal of this paper is to  describe  a fairly large class of strongly extremal measures on $\mr$.
Here is an important special case of our general  results:

\begin{thm}\name{thm: strexanddi}
For every $i = 1,\dots,m$, let $\vf_i$ be a nondegenerate
 map
from an
 open subset $U_i$ of $\br^{d_i}$ to $\br^n$, and let
  \eq{f}{
F: U_1\times\dots\times U_m\to\mr,\quad(\vx_1,\dots,\vx_m) \mapsto \begin{pmatrix} \vf_1(\vx_1)\\
\vdots\\
\vf_m(\vx_m)\end{pmatrix}\,.
}
Then 
the pushforward of  Lebesgue measure on $U_1\times\dots\times U_m$ by $F$ is 
 strongly extremal.
\ignore{
 \item[(b)] there exists
$\vre_0$ such that for any $\vre < \vre_0$ one has
$${ F_*\lambda\big(\DI_\vre(\mathcal{T})\big) = 0\ 
\text{ for any unbounded } 
\mathcal{T}\subset\fa\,.
}$$
\end{itemize}
}
 \end{thm}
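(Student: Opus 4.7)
The plan is to follow the dynamical approach pioneered in \cite{KM} and extend it to the matrix setting by exploiting the block structure of $F$. To a matrix $Y\in M_{m,n}$ and a parameter $\vt\in\fa$ (an $(m+n)$-tuple of non-negative reals with $\sum_{i=1}^m t_i = \sum_{j=1}^n t_{m+j}$), associate the lattice $g_\vt u_Y\Z^{m+n}$ in $\br^{m+n}$, where
$$g_\vt = \diag(e^{t_1},\ldots,e^{t_m},e^{-t_{m+1}},\ldots,e^{-t_{m+n}}), \quad u_Y = \begin{pmatrix} I_m & Y \\ 0 & I_n \end{pmatrix}.$$
A standard Minkowski-style computation translates strong extremality into the following non-divergence assertion: for $F_*\lambda$-almost every $\vx$ in the domain of $F$, no $\delta>0$ admits a sequence $\vt_k\in\fa$ with $\|\vt_k\|\to\infty$ along which $g_{\vt_k}u_{F(\vx)}\Z^{m+n}$ contains a nonzero vector of norm at most $e^{-\delta\|\vt_k\|}$.

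By a Borel--Cantelli argument applied to a discretized grid inside $\fa$, it suffices to prove an estimate of the form
$$F_*\lambda\bigl\{\vx : g_\vt u_{F(\vx)}\Z^{m+n}\text{ contains a nonzero vector of norm }<\vre\bigr\} \ll \vre^\alpha,$$
uniformly in $\vt\in\fa$ of sufficiently large norm, for some $\alpha>0$. Such bounds are supplied by the quantitative non-divergence machinery on the space of lattices, provided one can verify two standard inputs: (i) the functions $\psi_\vw(\vx) := \|g_\vt u_{F(\vx)} \vw\|$, as $\vw$ ranges over $\bigwedge^k\Z^{m+n}$ representing primitive rational sublattices, are uniformly $(C,\alpha)$-good on balls in $U_1\times\cdots\times U_m$; and (ii) the supremum of $\psi_\vw$ over every such ball is bounded below by a uniform constant times the Euclidean norm $\|\vw\|$. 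This is precisely where the \emph{generalized} quantitative non-divergence estimate on the space of lattices, adapted to a product domain, is required.

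The crucial observation is that the block structure of $F$ forces every Pl\"ucker coordinate of $u_{F(\vx)}(\vw_1\wedge\cdots\wedge\vw_k)$, viewed as a function of $\vx = (\vx_1,\ldots,\vx_m)$, to be an $\br$-linear combination of products of the form $\prod_{i\in I} \ell_i(\vf_i(\vx_i))$, where $I\subset\{1,\ldots,m\}$ and each $\ell_i$ is an affine functional on $\br^n$. In particular, after fixing all $\vx_j$ with $j\neq i$, the resulting function of $\vx_i$ is an affine combination of coordinates of $\vf_i(\vx_i)$. The single-factor $(C,\alpha)$-goodness lemma from \cite{KM} applied to each $\vf_i$ then yields, via a slicing argument, that $\psi_\vw$ is $(C',\alpha')$-good on the product domain with constants independent of $\vw$ and $\vt$. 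A parallel slicing argument, using the uniform lower-bound property of \cite{KM} in each block, establishes (ii).

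The principal obstacle is making this slicing quantitative and uniform. One must extract a \emph{single} pair $(C',\alpha')$ that controls all Pl\"ucker coordinates and all parameter values $\vt$ simultaneously, even though the polynomials involved vary in degree and in which subset of blocks they couple. The key point is that local nondegeneracy of each $\vf_i$ is uniform in a neighborhood of a generic point with uniform $(C_i,\alpha_i)$ constants, so one can combine them inductively on the number of blocks $m$ to produce global uniform constants (with $\alpha'$ roughly $\min_i \alpha_i/m$). Once this is in place, the generalized quantitative non-divergence estimate applies to $F_*\lambda$ and yields the desired measure bound, completing the proof.
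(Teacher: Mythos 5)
Your overall strategy coincides with the paper's: reduce strong extremality to the absence of linear growth of the orbits $g_\vt u_{F(\x)}\Z^{m+n}$, $\vt\in\fa$, apply the quantitative nondivergence estimate, and verify its two hypotheses --- $(C,\alpha)$-goodness of the functions $\x\mapsto\|g_\vt u_{F(\x)}\vw\|$ and a uniform lower bound on their suprema over balls --- by exploiting the fact that the Pl\"ucker coordinates of $u_{F(\x)}\vw$ are integer linear combinations of $1$ and the minors of $F(\x)$, handled block by block. Your treatment of hypothesis (i) is essentially the paper's: the coordinates in the eigenbasis of $g_\vt$ are linear combinations of $1$ and the components of $\vd\circ F$, and the product/slicing step is exactly the induction on $m$ via the product-goodness lemma of Kleinbock--Tomanov, so the uniformity in $\vw$ and $\vt$ comes for free.

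There is, however, a genuine gap in your hypothesis (ii). You assert that $\sup_B\|g_\vt u_{F(\cdot)}\vw\|$ is bounded below uniformly in $\vt\in\fa$ by ``a parallel slicing argument,'' but slicing only gives a lower bound on coordinates of $u_{F(\x)}\vw$ itself, and $g_\vt$ contracts most of the eigendirections exponentially --- and \emph{which} directions are contracted varies with $\vt$ once $\min(m,n)>1$ (this is the essential new difficulty compared with the $m=1$ case of \cite{KM}, where the expanded subspace is the same for all $\vt$). To get a bound uniform in $\vt$ one must produce, for each primitive $\vw$, a coordinate of $u_{F(\x)}\vw$ that lies in $E^+=\bigcap_{\vt\in\fa}E^+_\vt$, i.e.\ in the span of the vectors $\ve_I$ and $\ve_{\{1,\dots,m\}}\wedge\vv_J$, and is simultaneously a \emph{nontrivial} integer linear combination of $1$ and the minors. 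This is the content of Proposition \ref{prop: nonzero}, proved by an explicit exterior-algebra computation split into the cases $\ell\le m$ and $\ell\ge m$; it is the combinatorial heart of the matrix case and is absent from your argument. Only after that does the lower bound reduce to the nonplanarity of $(\vd\circ F,\lambda)$, which itself requires the inductive argument of Theorem \ref{thm: indepvargeneral} (fix $\x_1$ using nonplanarity of $\vf_1$ to keep one coefficient large, then recurse on the remaining blocks); your phrase ``uniform lower-bound property of \cite{KM} in each block'' gestures at this but, as stated, would not survive the action of $g_\vt$.
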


The case 
$d_1 = \dots = d_m = 1$,
i.e.\ that of
$n$ nondegenerate curves in $\R^m$, had been previously 
studied by Kovalevskaya \cite{Ko1, Ko2, Ko3}. A special case of the above theorem where  $U_1 = \dots = U_m$ and $\vf_1 = \dots = \vf_m$
is also of interest: it describes  approximation properties
of generic $m$-tuples of points (viewed as row vectors, or linear forms) on a given nondegenerate manifold. In this form 
the above statement had been conjectured earlier by Bernik (private communication).
We remark that recently V.\ Beresnevich informed us of an alternative approach allowing to prove
Theorem \ref{thm: strexanddi} when $\vf_1, \dots , \vf_m$ are real analytic. 
\medskip

The structure of the paper is as follows. 
In \S \ref{statements} we introduce the terminology needed to  state our
general result   (Theorem    \ref{thm: strexgeneral})
   of which  
Theorem \ref{thm: strexanddi}  
is a  special case. 
In \S\ref{dioph} we discuss a dynamical approach
to  \da\ problems and describe \di\ properties introduced above in the language of flows
on the space of lattices. Then in \S\ref{nondiv} and \S\ref{exp} we
present the main `quantitative nondivergence' measure  estimate  and  use it 
to state and prove a more precise
version of Theorem    \ref{thm: strexgeneral}.
\S\ref{indepvar} is devoted to proving Proposition \ref{prop: indepvar}, which explains why Theorem \ref{thm: strexanddi} follows from Theorem     \ref{thm: strexgeneral}.
Then the results of  \S\ref{nondiv}
 are used in \S\ref{moreexamples} for construction of  examples of extremal and strongly extremal measures not covered by Theorem~\ref{thm: strexgeneral}. Finally in the last section we mention several 
additional results and further open questions.  


\medskip

\noindent{\bf Acknowledgements.} Part of the
work was done during the  collaboration of D.K.\ and G.M.\ at  ETH-Zurich, the University
of Bielefeld and Yale University;  the
hospitality of these institutions is gratefully
acknowledged. 
This research was supported in part by NSF grants DMS-0239463,  DMS-0244406, DMS-0801064 and DMS-0801195.
Several results of this paper were  part 
of Ph.D.\ Thesis of the third named author \cite{junbo} defended at Brandeis in 2008. The authors are grateful
to Victor Beresnevich and the reviewer for useful remarks.


\ignore{Let us start by recalling several basic facts from
the theory of simultaneous \da. For $\x,\vy\in\R^n$ we let
\begin{equation*}
\begin{split}
\vx\cdot\vy = \sum_{i = 1}^n x_iy_i&, \quad \|\vx\| = \max_{1\le i \le n}|x_i|,\\
\Pi(\vx) = \prod_{i = 1}^n |x_i|\quad&\text{ and
}\quad\Pi_{+}(\vx) = \prod_{i = 1}^n |x_i|_{+},
\end{split}
\end{equation*}
where $|x|_{+}$ stands for $\max(|x|, 1)$.
 One says that a matrix $Y\in Mat_{n,m}$ is

\begin{itemize}
\item {\it \vwa}  (VWA)
if  for some $\vre > 0$ there are infinitely many $\vq\in \Z^n$
such that

\eq{VMA hom}{\|Y\vq + \vp\|^{m} < \|\vq\|^{-n(1+\vre)}}

for some $\vp\in\Z^m$;
\item {\it \vwma}  (VWMA)
if  for some $\vre > 0$ there are infinitely many $\vq\in \Z^n$
such that \eq{hom}{ \Pi_{+}(Y\vq + \vp) < \Pi_{+}(\vq)^{-(1+\vre)}
} for some $\vp\in\Z^m$.
\end{itemize}

Clearly VWA $\Rightarrow$ VWMA, and it is easy to show, using the
Borel-Cantelli Lemma, that Lebesgue almost every $Y\in Mat_{n,m}$
is not VWMA (and hence not VWA either). A (Radon) measure $\nu$ on
$Mat_{n,m}$ is called {\sl extremal\/} (resp., {\sl strongly
extremal}) if $\mu$-a.e.~$Y\in Mat_{n,m}$ is not VWA (resp., not
VWMA); in particular, Lebesgue measure, hereafter denoted by
$\lambda$, is strongly extremal $\Rightarrow$ extremal. We could
prove a Theorem to show the general condition of strong extremal
of the Matrix. Before we state the Theorem, let us introduce some
notation:

Let $\vf=\left [\begin{array}{ccc}
                               f_{11} & \cdots & f_{1n} \\
                               \cdots & \cdots & \cdots \\
                               f_{m1} & \cdots & f_{mn}
                              \end{array}
                     \right
                     ]$(where each of $f_{ij}:U\to\R$ with $1\leq i\leq m$ and $1\leq j\leq n$ is a continuous
                     map) be the n by m matrix function we will consider.
                     Then define:

$D_{\vf_1}=(f_{11},f_{12},\cdots,f_{1n},\cdots,f_{m1},\cdots,f_{mn})$
so it contains all the functions of the matrix.

$D_{\vf_2}=(\left |\begin{array}{cc}
                               f_{ij} & f_{ik} \\
                               f_{lj} & f_{lk}
                              \end{array}
                     \right
                     |)$ for all $1\leq i,l\leq m$ and $1\leq
                     j,k\leq n$. So this is the functions of all
                     the determinant of all the 2 by 2
                     submatrices.

Similarly, we could define $D_{\vf_3},D_{\vf_4}$ until
$D_{\vf_{min(m,n)}}$.Finally we would define:

$D_{\vf}=\bigcup D_{\vf_i}$. With this definition, we could state
our first Theorem.

\begin{thm}\name{thm: homnm}
Let $\nu$ be a Federer measure on $\R^d$,$U$ an open subset of
$\R^d$, and $\vf$ is defined as before, if $(D_{\vf},\nu)$ is
nonplanar and the pair $(D_{\vf},\nu)$ is good . Then $\vf_*\nu$
is strongly extremal.

\end{thm}
}

\section{
The main theorem}
\name{statements}

We now introduce  some   terminology needed to state
a more general version  of Theorem \ref{thm: strexanddi}.
Let $X$ be a 
metric space. 
If $x\in X$ and $r> 0$, we denote
by $B(x,r)$  the open ball of radius $r$ centered at $x$. If $B =
B(x,r)$  and $c > 0$, $cB$ will denote the ball $B(x,cr)$. For
$B \subset X$ and a real-valued function $f$ on $B$, let
$$\Vert f \Vert_{B} \stackrel{\mathrm{def}}{=}\sup_{x \in B} |f(x)|\,.
$$
%
If $\nu$  is  a measure on $X$
such that $\nu(B) > 0$,  define $\Vert f \Vert_{\nu,B} \df \Vert f
\Vert_{B\,\cap\,\supp\,\nu}$. All measures on metric spaces will be assumed to be Radon.

 If $D
> 0$ and $U\subset X$ is an open subset, let us say that a 
measure $\nu$ on $X$  is
{\sl $D$-Federer on $U$\/}  if one has $\nu(\frac13B) > \nu(B)/D$  for any ball $B\subset U$ centered
at $\supp\,\nu$.
This condition is often called `doubling'
in the literature; see \cite{KLW, MU} for examples and
references. A measure $\nu$ will be called {\sl Federer\/}
 if
for $\nu$-a.e.\ $ x\in X$  there exist a neighborhood $U$ of
$x$ and $D > 0$ such that $\nu$ is $D$-Federer on $U$.

\medskip
Given $C,\alpha > 0$ and open $U \subset X$, say that $f:U\to \R$ is
         {\sl $(C,\alpha)$-good on $U$ with respect to\/} a measure $\nu$
         if for any ball $B \subset U$ centered in $\supp\,\nu$
and any $\vre > 0$ one has 
$$\nu\big(\{ x\in B :
|f(x)| < \vre\}\big) \le C \left(\frac{\varepsilon}{\Vert
f\Vert_{\nu, B}}\right)^\alpha{\nu(B)}\,.
$$ This condition was
formally introduced in \cite{KM} for $\nu$ being Lebesgue
measure on $\R^d$, and in \cite{KLW} for  arbitrary $\nu$.
%
%
If $\vf = (f_1,\dots,f_N)$ is a map from $U$ to $R^N$, following \cite{dima tams}, we will  say that
a pair $(\vf,\nu)$ is  {\sl good\/}
 if for $\nu$-a.e.\ $x$
   there exists  a
neighborhood $V$ of $x$  
such that
any linear 
combination of $1,f_1,\dots,f_N$  is $(C,\alpha)$-good on $V$ with
respect to $ \nu$. 
\ignore{
For fixed $C,\alpha$, we will say that  $(\vf,\nu)$ is  {\sl
\cag\/}   if for $\nu$-a.e.\ $x$
   there exists  a
neighborhood $U$ of $x$  
such that $(\vf,\nu)$ is 
 $(C,\alpha)$-good  on $U$, and that $(\vf,\nu)$ is  {\sl good\/} if  for $\nu$-a.e.\ $x$
   there exists  a
neighborhood $U$ of $x$  and $C,\alpha > 0$ 
such that $(\vf,\nu)$ is 
 $(C,\alpha)$-good  on $U$.}

Here is another useful definition: 
$(\vf,\nu)$ is said to be {\sl nonplanar\/}
if for any ball $B$
with $\nu(B) > 0$, the restrictions of $ 1,f_1,\dots,f_{N}$
to $B\,\cap \,\supp\,\nu$ are linearly independent over $\R$; in
other words,  $\vf(B \,\cap\, \supp\,\nu)$  is not contained in
any proper affine subspace  of $\R^N$. 

Important examples of good and nonplanar pairs $(\vf,\nu)$ 
are $\nu = \lambda$ (Lebesgue
measure on $\R^d$) and $\vf$ smooth and nondegenerate. In this
case the fact that $(\vf,\lambda)$ is  
good follows from  \cite[Proposition 3.4]{KM}, and
nonplanarity is immediate. 
In \cite{KLW} a class of {\sl friendly\/} measures was introduced:
  a measure $\nu$ on $\R^n$ is  
friendly if and only if it is Federer and the pair $(\Id,\nu)$
is good and nonplanar; many examples of those can be found in \cite{KLW, Urbanski, Urbanski-Str}. 
In the paper \cite{KLW} the approach to metric \da\ developed in \cite{KM}
has been extended to maps and measures satisfying the conditions described above.
One of its main results is the following theorem \cite[Theorem 4.2]{dima pamq}, implicitly contained in  \cite{KLW}: let $\nu$ be a Federer measure on
$\br^d$, $U\subset\br^d$ open, and $\vf:U\to\br^n$ a continuous map such that $(\vf,\nu)$ is 
good and
nonplanar; then $\vf_*\nu$
is strongly extremal. 
\ignore{
Another relevant  result is  \cite[Theorem 1.5]{KW} which says the following:
let 
$\nu$ be a  $D$-Federer 
measure  on
$\R^d$, $U\subset\br^d$ open, and   $\vf:U\to\br^n$  continuous such that 
the pair $(\vf,\nu)$ is 
$(C,\alpha)$-good and
nonplanar; then  $ \vf_*\nu\big(\DI_\vre(\mathcal{T})\big) = 0$
for any unbounded $ 
\mathcal{T}\subset\fa$ and any $\vre < \vre_0$, where
$\vre_0$ depends only on $d,n,C,\alpha,D$.
}

Our goal in this paper is to  replace $\R^n$ with $\mr$ in the above statements. 
For this, given 
$Y = (y_{i,j})\in\mr$ and subsets 
$I =
\{i_1,\dots,i_{r}\}\subset \{1,\dots,m\}$ and $J =
\{j_1,\dots,j_{r}\}\subset \{1,\dots,n\}$ of equal cardinality and with $i_1 <\dots <i_{r}$ and $j_1<\dots<j_{s}$,
we define \eq{def yij}{
y_{I,J} \df\left|\begin{matrix}
                              y_{i_{1},j_{1}} & \cdots & y_{i_{1},j_{r}} \\
                               \cdots & \cdots & \cdots \\
                               y_{i_{r},j_{1}} & \cdots & y_{i_{r},j_{r}}
                              \end{matrix}\right|\,,\text{ with the convention }y_{\varnothing, \varnothing} = 1
                     \,.
}
Denote by
\eq{def n}{N \df {{m+n}\choose{ m}}  - 1
} 
the number of different square submatrices of an $m\times n$ matrix, and
 consider  the map $\vd:\mr\to\br^N$ given by
$$
\vd(Y) \df  \big(y_{I,J}\big)_{I\subset  \{1,\dots,m\},\  J\subset  \{1,\dots,n\},\  0 <|I| = |J| \le \min(m,n)}\,.
$$
In other words, $\vd(Y)$ is a vector whose coordinates are determinants of all possible square submatrices of $Y$ (the order in which they appear does not matter).
\medskip

At this point we can state  the
 main result of the paper:

\begin{thm}\name{thm: strexgeneral} Let $\nu$ be a Federer measure on
$\br^d$, $U\subset\br^d$ open,  and $F:U\to\mr$ a continuous map such that $(\vd \circ F,\nu)$ is 
{\rm (i)} good and
{\rm (ii)} nonplanar.  Then $F_*\nu$
is strongly extremal.
\end{thm}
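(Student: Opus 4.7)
The plan is to extend the dynamical approach of \cite{KM, KLW} from the case $m=1$ to the matrix setting, using the Dani-type correspondence between Diophantine approximation and excursions of trajectories in the space of unimodular lattices $\Omega = \SL_{m+n}(\br)/\SL_{m+n}(\bz)$. For $Y \in \mr$ put $u_Y = \left(\begin{smallmatrix} I_m & Y \\ 0 & I_n \end{smallmatrix}\right)$, and for $\vt = (t_1,\dots,t_{m+n}) \in \fa$ put $g_\vt = \diag(e^{t_1},\dots,e^{t_m},e^{-t_{m+1}},\dots,e^{-t_{m+n}})$. Standard arguments (to be developed in \S\ref{dioph}) show that $Y$ is VWMA if and only if there is $\delta > 0$ and an unbounded sequence $\vt^{(k)} \in \fa$ along which $g_{\vt^{(k)}} u_Y \bz^{m+n}$ contains a nonzero vector of norm $< e^{-\delta \|\vt^{(k)}\|}$. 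Strong extremality of $F_*\nu$ is therefore equivalent to a statement that $g_\vt u_{F(\vx)} \bz^{m+n}$ does not exhibit such linear escape for $\nu$-a.e.\ $\vx$.

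By a Borel--Cantelli argument applied to a countable grid of parameters $\vt \in \fa$ with $\|\vt\|\to\infty$, and to a countable cover of $U$ by balls $B$, it suffices to show an estimate of the form
$$
\nu\big(\{\vx \in B : g_\vt u_{F(\vx)} \bz^{m+n} \text{ has a vector of norm} < e^{-\delta\|\vt\|}\}\big) \le C' e^{-\alpha\delta\|\vt\|}\,\nu(B),
$$
valid on balls $B$ contained in a neighborhood where $\nu$ is Federer and the pair $(\vd\circ F,\nu)$ is $(C,\alpha)$-good. Such an estimate will be provided by the quantitative nondivergence theorem on the space of lattices to be stated in \S\ref{nondiv}, and my task reduces to checking its hypotheses: that for every primitive $\vw \in \wedge^k \bz^{m+n}$ with $0 < k < m+n$, the scalar functions $\vx \mapsto \|g_\vt u_{F(\vx)} \vw\|$ (in a fixed norm on $\wedge^k \br^{m+n}$) are $(C,\alpha)$-good on $B$ with respect to $\nu$, and the sup-norm of $g_\vt u_{F(\vx)} \vw$ over $B$ is bounded below by a suitable quantity.

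Expanding $\vw$ in the standard polyvector basis $\{e_L\}_{|L|=k}$ and computing $u_Y\vw$, one checks that the coefficient of each $e_J$ in $u_Y\vw$ is an integer linear combination of the minors $y_{I,J'}$ of $Y$ (including the constants $y_{\varnothing,\varnothing}=1$ when diagonal index blocks are empty); applying $g_\vt$ simply multiplies each such coefficient by an exponential $e^{\sigma_\vt(J)}$. Consequently each coordinate of $g_\vt u_{F(\vx)}\vw$ is a fixed exponential factor times a linear combination of the entries of $\vd\circ F$ together with $1$. Hypothesis (i) then yields goodness of each coordinate (and therefore, after adjusting $C$, of its maximum) as a function of $\vx$ with respect to $\nu$, while hypothesis (ii) guarantees that on any ball $B$ with $\nu(B)>0$ no such nontrivial linear combination vanishes identically on $B \cap \supp\,\nu$, giving the required lower bound on $\|g_\vt u_{F(\cdot)}\vw\|_{\nu,B}$. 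I expect the main technical obstacle to be precisely the combinatorial bookkeeping in this last paragraph: identifying which minors arise as coefficients of $u_Y\vw$ for arbitrary $\vw$, and making the lower bound on $\|g_\vt u_{F(\cdot)}\vw\|_{\nu,B}$ explicit enough (in terms of $\|g_\vt\|$ and the covolume associated to $\vw$) to feed into quantitative nondivergence and to survive summation over $\vt$ in the Borel--Cantelli step.
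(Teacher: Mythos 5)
Your overall strategy coincides with the paper's: the correspondence reducing strong extremality of $F_*\nu$ to the absence of linear growth of $g_\fa\Lambda_{F(\x)}$ for $\nu$-a.e.\ $\x$, quantitative nondivergence (Theorem \ref{thm: friendly nondivergence}) fed into a Borel--Cantelli argument over a countable grid in $\fa$, goodness of the functions $\x\mapsto\|g_\vt u_{F(\x)}\vw\|$ extracted from hypothesis (i) via the observation that every coordinate of $u_Y\vw$ is an integer linear combination of $1$ and the minors of $Y$, and a lower bound on $\|g_\vt u_{F(\cdot)}\vw\|_{\nu,B}$ extracted from hypothesis (ii). The first three ingredients are carried out in the paper essentially as you describe.

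There is, however, a genuine gap at the last step, and it is exactly the point you flag as ``the main technical obstacle'' without resolving it. Knowing that \emph{some} coordinate of $u_{F(\cdot)}\vw$ in the basis $\{\ve_I\wedge\vv_J\}$ is a nontrivial integer linear combination of $1$ and the components of $\vd\circ F$, hence (by nonplanarity and Lemma \ref{lem: nonpl}) of sup-norm at least $c=c(B)>0$ on $B\cap\supp\,\nu$, does \emph{not} yield the required lower bound on $\|g_\vt u_{F(\cdot)}\vw\|_{\nu,B}$: applying $g_\vt$ multiplies that coordinate by $e^{\sum_{i\in I}t_i-\sum_{j\in J}t_{m+j}}$, which in a contracted direction decays exponentially in $t$ and destroys any estimate that could survive the Borel--Cantelli summation. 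What is actually needed --- and what constitutes the real content of the paper's argument, Proposition \ref{prop: nonzero} --- is that for \emph{every} $\vw\in\fw$ a nontrivial integer combination already appears among the coordinates of $u_Y\vw$ along the subspace $E^+=\cap_{\vt\in\fa}E^+_\vt$ spanned by the $\ve_I$ and the $\ve_{\{1,\dots,m\}}\wedge\vv_J$, i.e.\ the directions contracted by no $g_\vt$. This is a nontrivial combinatorial fact: starting from a nonzero coefficient $a_{I,J}$ of $\vw$ one must choose a complementary index set $K$ (with $K\subset\{1,\dots,m\}\ssm I$ and $|K|=|J|$ when $|I|+|J|\le m$, and dually when $|I|+|J|\ge m$) so that $a_{I,J}$ survives as a coefficient in $\langle u_Y\vw,\ve_{I\cup K}\rangle$, respectively $\langle u_Y\vw,\ve_{\{1,\dots,m\}}\wedge\vv_{J\ssm K}\rangle$. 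Once this is in place, the chain $\|g_\vt u_{F(\x)}\vw\|\ge\|\pi^+_\vt u_{F(\x)}\vw\|\ge|\langle u_{F(\x)}\vw,\vw_0\rangle|$ gives a bound uniform in $\vt$ and $\vw$, and the rest of your outline goes through (this is Corollary \ref{cor: uniform lower}); without it, the estimate you propose to feed into quantitative nondivergence is simply not available, and indeed (cf.\ Corollary \ref{cor: uniform zero} and the examples of \S\ref{moreexamples}) it genuinely fails for maps whose image meets the orthogonal complement of some $E^+_{\vt_0}$.
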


\ignore{
\begin{thm}\name{thm: digeneral} For any\,
$d,m,n\in\N$ and   
$\,C,\alpha,D > 0$ there exists
$\vre_0 =  \vre_0(d,m,n,C,\alpha,D)$ with the following property.
 Let 
 $\nu$ be a  
measure  on
$\R^d$, $U\subset\br^d$ open, and     $F:U\to\mr$ continuous. 
Assume that $\nu$ is  $D$-Federer,  and $(\vd \circ F,\nu)$ is 
{\rm (i)}  $(C,\alpha)$-good and
{\rm (ii)}  nonplanar.
Then for any $\vre < \vre_0$
\eq{result general}{ F_*\nu\big(\DI_\vre(\mathcal{T})\big) = 0\ 
\text{ for any unbounded } 
\mathcal{T}\subset\fa\,.
}
\end{thm}
}

Note that  if  $\min(m,n) = 1$, $
 \vd \circ F$ coincides with $F$, and $N$ is equal to $\max(m,n)$; thus  \cite[Theorem 4.2]{dima pamq}
cited above  is a special case  of Theorem  \ref{thm: strexgeneral}.
If $\min(m,n) > 1$, 
the assumptions (i) and (ii) above can be verified for a wide variety of examples.
For instance, when a map $F:U\to\mr$ is real analytic and $\nu$ is Lebesgue measure, 
assumption (i) of both theorems is satisfied (this follows from the results of \cite{KM}
and \cite{gafa}). And  if $F$ is differentiable and $\nu = \lambda$, both (i) and (ii) would  follow from
an assumption that the map $\vd\circ F:\br^d\to\br^N$ is nondegenerate. We explain this in more detail in 
\S\ref{indepvar}, where we also    prove

\begin{prop}\name{prop: indepvar} 
Let $F: U_1\times\dots\times U_m\to \br^n$
be as in   Theorem \ref{thm: strexanddi}.
Then the pair $(\vd\circ F,\lambda)$ is good and nonplanar.
\end{prop}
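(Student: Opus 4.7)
The plan is to factor the claim through an auxiliary tensor-product map whose coordinates consist of all monomials $\prod_i f_{i, j_i}(\vx_i)$. Each $y_{I, J} \circ F$ is a linear combination of these monomials, so once the auxiliary map is shown to be nondegenerate, goodness follows from \cite[Proposition 3.4]{KM}; nonplanarity is then a separate check via disjoint monomial supports.

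Set $\til\vf_i(\vx_i) := (1, \vf_i(\vx_i)) \in \br^{n+1}$ and let $\Psi: U_1 \times \cdots \times U_m \to \br^{(n+1)^m}$ be the tensor-product map
\[
\Psi(\vx_1, \ldots, \vx_m) := \til\vf_1(\vx_1) \otimes \cdots \otimes \til\vf_m(\vx_m),
\]
whose coordinate indexed by $\vec{j} = (j_1, \ldots, j_m) \in \{0, 1, \ldots, n\}^m$ is $\prod_i f_{i, j_i}(\vx_i)$ (with the convention $f_{i, 0} \equiv 1$). Let $\Psi'$ be obtained from $\Psi$ by dropping the coordinate corresponding to $\vec{j} = (0, \ldots, 0)$, which is the constant function $1$. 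Expanding $y_{I, J}(F(\vx)) = \det(f_{i, j}(\vx_i))_{i \in I, j \in J} = \sum_{\sigma: I \to J} \mathrm{sgn}(\sigma) \prod_{i \in I} f_{i, \sigma(i)}(\vx_i)$ exhibits each coordinate of $\vd \circ F$ as a linear combination of coordinates of $\Psi'$; consequently, any linear combination of $1$ and the $y_{I, J} \circ F$'s is a linear combination of $1$ and the coordinates of $\Psi'$.

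The main technical step is to verify that $\Psi'$ is nondegenerate at $\lambda$-a.e.\ point, and this is where one must be careful. Fix $\vx^0 = (\vx_1^0, \ldots, \vx_m^0)$ at which each $\vf_i$ is $\ell_i$-nondegenerate at $\vx_i^0$; by hypothesis such points have full measure. Then the partial derivatives of $\til\vf_i$ of orders $0, 1, \ldots, \ell_i$ span $\br^{n+1}$: the positive-order derivatives lie in $\{0\} \times \br^n$ and span this hyperplane by nondegeneracy of $\vf_i$, while $\til\vf_i(\vx_i^0) = (1, \vf_i(\vx_i^0))$ contributes the missing constant direction. Because the variable groups $\vx_i$ are disjoint, one has $\partial^{\alpha_1}_{\vx_1} \cdots \partial^{\alpha_m}_{\vx_m} \Psi(\vx^0) = \bigotimes_{i=1}^m \partial^{\alpha_i} \til\vf_i(\vx_i^0)$, and since a tensor product of spanning sets spans the tensor product space, as $(\alpha_1, \ldots, \alpha_m)$ ranges over multi-indices with $|\alpha_i| \le \ell_i$, these derivatives span $\br^{(n+1)^m}$. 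Projecting to the non-constant coordinates, the partial derivatives of $\Psi'$ of total order at most $\sum_i \ell_i$ span $\br^{(n+1)^m - 1}$, so $\Psi'$ is nondegenerate at $\vx^0$.

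Applying \cite[Proposition 3.4]{KM} to $\Psi'$ now produces, for a.e.\ $\vx$, a neighborhood on which every linear combination of $1$ and coordinates of $\Psi'$ is $(C, \alpha)$-good with respect to Lebesgue measure for some $C, \alpha > 0$; by the observation above this gives the goodness of $(\vd \circ F, \lambda)$. For nonplanarity, note that for distinct pairs $(I_1, J_1) \ne (I_2, J_2)$ the monomial supports of $y_{I_1, J_1}$ and $y_{I_2, J_2}$ are disjoint: the sets of variables differ when $I_1 \ne I_2$, and the column indices differ when $I_1 = I_2$ but $J_1 \ne J_2$. The individual monomials $\prod_{i \in I} f_{i, \sigma(i)}(\vx_i)$ are themselves linearly independent on any ball, since the $\vx_i$ are independent variables and, by nondegeneracy, $1, f_{i, 1}, \ldots, f_{i, n}$ are linearly independent on any neighborhood in $U_i$. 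Thus $\{1\} \cup \{y_{I, J} \circ F\}$ is linearly independent on every ball in $U_1 \times \cdots \times U_m$, which establishes nonplanarity.
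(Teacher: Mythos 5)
Your proof is correct, but it takes a genuinely different route from the paper's. The paper deduces the Proposition from the more general Theorem \ref{thm: indepvargeneral}, which is proved by induction on the number of rows $m$: a cofactor-type identity (the passage from \equ{lincomb1} to \equ{lincomb2}) rewrites any linear combination of $1$ and the minors of $F$ as a linear combination of minors of the last $m-1$ rows whose coefficients are affine in the entries of the first row; nonplanarity then follows by applying Lemma \ref{lem: nonpl} first to $(\vf_1,\nu_1)$ and then to the inductive hypothesis, and goodness follows from the product lemma \ref{lem: prod good} (from \cite{KT}). That argument needs only that each $(\vf_i,\nu_i)$ is good and nonplanar and each $\nu_i$ is Federer, so it covers arbitrary such measures and merely continuous $\vf_i$. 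Your tensor-product argument is shorter and more transparent for the case actually stated: it exhibits every component of $\vd\circ F$ as a fixed linear combination of the coordinates of a single nondegenerate map $\Psi'$, so goodness comes straight from \cite[Proposition 3.4]{KM} and nonplanarity from the disjointness of monomial supports together with linear independence of tensor products of independent families. The price is that it is tied to Lebesgue measure and to smooth nondegenerate $\vf_i$, since it needs nondegeneracy of the high-dimensional map $\Psi'$; it would not yield Theorem \ref{thm: indepvargeneral}.

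One detail worth spelling out in your nondegeneracy step: nondegeneracy of $\Psi'$ at $\vx^0$ means that the \emph{positive}-order partial derivatives of $\Psi'$ span $\br^{(n+1)^m-1}$, whereas your tensor computation produces a spanning set of $\br^{(n+1)^m}$ that includes the order-zero term $\Psi(\vx^0)$. This is easily repaired: every positive-order derivative of $\Psi$ has vanishing $(0,\dots,0)$-coordinate, so the span $V$ of the positive-order derivatives lies in that coordinate hyperplane; since $V+\br\,\Psi(\vx^0)=\br^{(n+1)^m}$ forces $\dim V\ge (n+1)^m-1$, the space $V$ equals the hyperplane, and hence the positive-order derivatives of $\Psi'$ do span $\br^{(n+1)^m-1}$, as required.
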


In view of the above proposition and since Lebesgue measure is  Federer, Theorem  \ref{thm: strexanddi} follows from 
Theorem  \ref{thm:  strexgeneral}.

\medskip

We remark that the assumptions (i) and (ii) of Theorem
\ref{thm: strexgeneral} 
are not the most general possible; in particular, assuming (i) one can
establish 
necessary and sufficient conditions for the extremality and strong extremality of $F_*\nu$, see Theorem   \ref{thm: criterion}. 
Some examples of extremal and strongly extremal measures not covered by
Theorem
\ref{thm: strexgeneral}  are discussed in \S\ref{moreexamples}.
\ignore{However they For example, consider
the following maps from $\br$ to $M_{2,2}$ and $M_{2,3}$ :
\eq{examples}{
\begin{aligned}F_1(x) = \begin{pmatrix} x & x^2\\ x^3 & x^5\end{pmatrix},\ F_2(x) = \begin{pmatrix} x & x^2\\ x^3 & x^4\end{pmatrix},\ F_3(x) = \begin{pmatrix} x & x^2  \\ x^2 & x\end{pmatrix},\\ F_4(x) = \begin{pmatrix} x & x^2 & x^3  \\ x^3 & x^4 & x^5\end{pmatrix},\ F_5(x) = \begin{pmatrix} x & x^2 & x^3  \\ x^3 & x^4 & x^6\end{pmatrix}\,.\end{aligned}} [maybe choose different examples]
Clearly $$\vd\circ F_1: x \mapsto (x,x^2,x^3,x^5, x^6 - x^5)$$ is nondegenerate, hence $(F_1)_*\lambda$ satisfies
the conclusions of Theorem \ref{thm: strexgeneral}. However the nonplanarity condition fails for the rest of the maps. Yet it is possible,
by a careful use of the scheme of proof of these theorems, to say more. For example, 
some of the above curves in $M_{2,2}$ or $M_{2,3}$ are strongly extremal, while some are extremal but not
strongly extremal (fill in later).}

\ignore{
Let $\vf=\left [\begin{array}{ccc}
                               f_{11} & \cdots & f_{1n} \\
                               \cdots & \cdots & \cdots \\
                               f_{m1} & \cdots & f_{mn}
                              \end{array}
                     \right
                     ]$(where each of $f_{ij}:U\to\R$ with $1\leq i\leq m$ and $1\leq j\leq n$ is a continuous
                     map) be the n by m matrix function we will consider.
                     Then define:

$D_{\vf_1}=(f_{11},f_{12},\cdots,f_{1n},\cdots,f_{m1},\cdots,f_{mn})$
so it contains all the functions of the matrix.

$D_{\vf_2}=(\left |\begin{array}{cc}
                               f_{ij} & f_{ik} \\
                               f_{lj} & f_{lk}
                              \end{array}
                     \right
                     |)$ for all $1\leq i,l\leq m$ and $1\leq
                     j,k\leq n$. So this is the functions of all
                     the determinant of all the 2 by 2
                     submatrices.

Similarly, we could define $D_{\vf_3},D_{\vf_4}$ until
$D_{\vf_{min(m,n)}}$.Finally we would define:

$D_{\vf}=\bigcup D_{\vf_i}$. With this definition, we could state
our first Theorem.

\begin{thm}\name{thm: homnm}
Let $\nu$ be a Federer measure on $\R^d$,$U$ an open subset of
$\R^d$, and $\vf$ is defined as before, if $(D_{\vf},\nu)$ is
nonplanar and the pair $(D_{\vf},\nu)$ is good . Then $\vf_*\nu$
is strongly extremal.

\end{thm}

If $\vf\in Mat_{m,n}$, suppose $n<m$(it will be similar to define
when $m>n$), and $\vf=\left [\begin{array}{ccc}
                               f_{11} & \cdots & f_{1n} \\
                               \cdots & \cdots & \cdots \\
                               f_{m1} & \cdots & f_{mn}
                              \end{array}
                     \right
                     ]$, we say that $\vf$ is $(C,\alpha)-$good
                     if for any two matrix $A\in Mat_{m}$ and $B\in Mat_{m,n}$ with

$A=\left [\begin{array}{ccc}
                               a_{11} & \cdots & a_{1m} \\
                               \cdots & \cdots & \cdots \\
                               a_{m1} & \cdots & a_{mm}
                              \end{array}
                     \right
                     ]$

$B=\left [\begin{array}{ccc}
                               b_{11} & \cdots & b_{1n} \\
                               \cdots & \cdots & \cdots \\
                               b_{m1} & \cdots & b_{mn}
                              \end{array}
                     \right
                     ]$, then the determinant of the following matrix is $(C,\alpha)-$good.

$\left [\begin{array}{ccc}
                               a_{11}+b_{11}f_{11}+b_{12}f_{12}+\cdots+b_{1n}f_{1n} & \cdots & a_{m1}+b_{11}f_{m1}+b_{12}f_{m2}+\cdots+b_{1n}f_{mn} \\
                               \cdots & \cdots & \cdots \\
                               a_{1m}+b_{m1}f_{11}+b_{m2}f_{12}+\cdots+b_{mm}f_{mm} & \cdots & a_{mn}+b_{m1}f_{m1}+b_{m2}f_{m2}+\cdots+b_{mn}f_{mm} \\
                              \end{array}
                     \right
                     ]$.

Note that when $m =1$, this definition is same as the
$(C,\alpha)-$good of the vector functions.}

\section{Diophantine approximation and flows on homogeneous spaces}
\label{dioph}



From now on we will let $k = m+n$ and put $G=\SL_{k}( \R), \ \Gamma=\SL_{k}( \Z)$ and $\Omega = \ggm$.
Note that $\Omega$
is naturally identified with the space of unimodular lattices in
$\R^{k}$ via the correspondence $g\Gamma \mapsto g\Z^{k}$.
Define
$$
u_Y \stackrel{\mathrm{def}}{=} \left(
\begin{array}{ccccc}
I_{m} & Y \\ 0 & I_{n} 
\end{array}
\right), \ \ \ \Lambda_Y \stackrel{\mathrm{def}}{=} 
u_Y \Z^{k}\,,
$$
where $I_{\ell}$ stands for the $\ell\times \ell$ identity matrix.
To highlight the relevance of the objects defined above to the \di\ problems considered in the introduction,
note that 
$$
 \Lambda_Y = \left\{\begin{pmatrix} Y\vq - \vp\\\vq\end{pmatrix} : \vp\in\Z^m,\  \vq\in\Z^n\right\}\,.
$$
The main theme of this section is a well known restatement of  \di\ properties of $Y$ in terms of
behavior of certain orbits of $\Lambda_Y$ on $\Omega$. 
Let us denote by $\fa
$
the set of $k$-tuples $\vt = (t_1,\dots,t_{k})\in \R^{k}$
such that
\eq{sumequal}{
t_1,\dots,t_{k} > 0
\quad \mathrm{and}\quad 
\sum_{i = 1}^m t_i =\sum_{j = 1}^{n} t_{m+j} \,.
} 
To any $\vt\in\fa
$ 
let us associate the diagonal
matrix $$g_\vt \df \diag(e^{t_1}, \ldots,
e^{ t_m}, e^{-t_{m+1}}, \ldots,
e^{-t_{k}})\in G\,.$$
If $\ft$ is a subset of $\fa$, we let $g_\ft \df \{g_\vt : \vt\in\ft\}$.
We are going to consider $g_\ft$-orbits of lattices $\Lambda_Y$.
The two most important special cases will be $\ft =
\fa$ and $\ft = \fr$, where
\eq{def r}{\fr
 \df \left\{\left(\tfrac t m,\dots,\tfrac t m,\tfrac t n,\dots,
\tfrac t n\right)
: t > 0\right\}} is 
the `central ray' in $\fa
$. 
Also it will be convenient to use the following notation: for $ \vt\in\mathcal{A}$, we will denote
\eq{def t}{t = 
\sum_{i = 1}^m t_i =\sum_{j = 1}^{n} t_{m+j}\,, }
so that whenever $\vt$ and $t$ appear in the same formula, \equ{def t} will be assumed.
Clearly one has  $t \ge \|\vt\| \ge t/\min(m,n)$. Note also that this agrees with
the notation of \equ{def r}.

 Given $\varepsilon>0$, consider
\begin{equation*}
\begin{split}
K_{{\varepsilon}} &\stackrel{\mathrm{def}}{=} \big\{\Lambda\in\Omega \bigm| \Vert 
\vv \Vert \geq {\varepsilon} \quad \forall\, \vv \in
\Lambda \sm \{ 0 \}\big\},
\end{split}\end{equation*}
i.e.\ the collection of all unimodular
lattices in $\R^{k}$ which contain no nonzero vector 
of norm smaller than
$\varepsilon$). 
By Mahler's compactness criterion (see e.g.\  \cite[Chapter
10]{Rag}), each $K_{{\varepsilon}}$ is compact.
It has been observed in the past\footnote{See also \cite{Dani} where it is proved that $Y$ is badly approximable iff $g_\fr\Lambda_Y$ is bounded.}
that the existence of infinitely many solutions of inequalities \equ{vwa} and \equ{hom} corresponds to an unbounded sequence of excursions of certain trajectories outside of
the increasing family of compact subsets described above
 --  roughly speaking, to the trajectories growing with certain rate.
To make this specific, given 
$\mathcal{T}\subset \fa
$ and a lattice $\Lambda\in\Omega$, say that the trajectory $g_{\mathcal{T}}  \Lambda$ 
{\sl has linear growth\/}
if there exists $\gamma > 0$ such that
$$g_{\T} \Lambda \notin K_{e^{-\gamma t}}\text{ for an unbounded set of }\vt\in\mathcal{T}\,.
$$
(The terminology is justified by the fact that for small $\vre$, the diameter of $K_{{\varepsilon}}$
is bounded from both sides by $\const\cdot \log(1/\vre)$.)

\medskip

The next proposition gives the desired correspondence between approximation and dynamics:

\begin{prop}\name{KM - rate of escape condition} Let \amr.

\begin{itemize}
\item[(a)]
$Y$ is VWA $\Leftrightarrow$ $g_{\mathcal{R}}  \Lambda_Y$  has linear growth; 
 \item[(b)]
$Y$ is VWMA $\Leftrightarrow$  $g_{\mathcal{A}}  \Lambda_Y$  has linear growth.
\ignore{
\item[(c)] for an unbounded $\ft\subset \fa$, $Y\in\DI_\vre(\mathcal{T})$ $\Leftrightarrow$ $ \exists\,T > 0$ such that $g_\vt \Lambda_Y   \in K_\vre$
  for all $\vt\in\ft$ with $t > T$; in other words,
  \eq{liminf}
{\DI_\vre(\mathcal{T}) = \bigcup_{T> 0}\quad\bigcap_{\vt\in\mathcal{T} ,\,t > T}
\{Y : 
g_\vt \Lambda_Y   \notin K_{\vre}\}\,.}
}
\end{itemize}
\end{prop}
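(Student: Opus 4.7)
\medskip\noindent\textbf{Proof plan.} The strategy is to translate the linear-growth condition into Diophantine inequalities on integer pairs $(\vp,\vq)$ and match these to the definitions of VWA and VWMA. Using the description $\Lambda_Y = \{(Y\vq - \vp,\vq) : \vp \in \Z^m,\ \vq \in \Z^n\}$, the action of $g_\vt$ multiplies the $i$-th upper coordinate by $e^{t_i}$ and the $j$-th lower coordinate by $e^{-t_{m+j}}$. Thus $g_\vt \Lambda_Y \notin K_{e^{-\gamma t}}$ is equivalent to the existence of a nonzero $(\vp,\vq) \in \Z^m \times \Z^n$ satisfying
\[
|(Y\vq - \vp)_i| < e^{-t_i - \gamma t}\quad(1\le i \le m), \qquad |q_j| < e^{t_{m+j} - \gamma t}\quad(1 \le j \le n),
\]
and any such witness must have $\vq \ne 0$ once $t$ is large (else $\vp$ would be a nonzero integer vector of norm less than $1$).

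Part~(a) reduces to specializing $\vt \in \fr$, which simplifies the system above to $\|Y\vq - \vp\| < e^{-(1/m + \gamma) t}$ and $\|\vq\| < e^{(1/n - \gamma) t}$. Eliminating $t$ in favor of $Q := \|\vq\|$ produces $\|Y\vq - \vp\| < Q^{-n/m - \delta}$ with $\delta = n(m+n)\gamma/[m(1 - n\gamma)]$, so linear growth forces VWA. For the converse, given a VWA witness $\vq$ with parameter $\delta$, I would set $t$ so that $\|\vq\| = e^{(1/n - \gamma)t}$ and verify algebraically that the VWA inequality yields the companion upper bound once $\gamma$ is small relative to $(m,n,\delta)$. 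Unboundedness of $t$ is automatic because $\|\vq\| \to \infty$, and the exceptional case $Y\vq_0 \in \Z^m$ produces VWA trivially via integer multiples of $\vq_0$.

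For~(b), the direction linear growth $\Rightarrow$ VWMA is obtained by taking the product of the upper inequalities: this gives $\Pi(Y\vq - \vp) < e^{-(1+m\gamma)t}$, and combined with $\Pi_+(\vq) \le e^t$ from the lower inequalities, it yields VWMA with exponent $m\gamma$. For the converse, given a VWMA witness $\vq$ with $\vp$ the nearest integer to $Y\vq$ (so $w_i := |(Y\vq - \vp)_i| \le 1/2$), I would equalize the weighted coordinates by choosing $\alpha = (\Pi_+(\vq)\Pi(Y\vq-\vp))^{1/(m+n)}$ and setting $t_i = \log(\alpha/w_i)$, $t_{m+j} = \log(|q_j|_+/\alpha)$. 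Then $\sum t_i = \sum t_{m+j} =: t$, the weighted maximum is $\|g_\vt(Y\vq-\vp,\vq)\|_\infty = \alpha$, and the VWMA bound combined with $t \ge \tfrac{m+n+n\delta}{m+n}\log\Pi_+(\vq)$ gives $\alpha < e^{-\gamma t}$ for any $\gamma < \delta/(m+n+n\delta)$.

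The main obstacle in the converse of~(b) is the positivity constraint $t_i > 0$: the equalizing choice above requires $\alpha > \max_i w_i$, which can fail when some $w_i$ is comparable to $1/2$. I would overcome this by extracting an infinite subsequence of VWMA witnesses along which $\max_i w_i$ decays polynomially in $\Pi_+(\vq)$---available via a Borel--Cantelli type argument---or, when $Y$ has a rational row so that some $w_i$ vanishes on a sublattice, by invoking Dirichlet's theorem on the remaining coordinates to produce short lattice vectors of $\Lambda_Y$ with at least one coordinate equal to zero. Varying $\vq$ through this subsequence then produces the required unbounded family of $\vt \in \fa$ and establishes linear growth.
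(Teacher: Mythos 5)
Your dictionary between linear growth and the system of inequalities on $(\vp,\vq)$ is correct, and your treatment of the implication ``linear growth $\Rightarrow$ VWMA'' is sound and in fact slightly cleaner than the paper's: you use only $\Pi_+(\vq)\le e^t$, whereas the paper tracks the number $\ell$ of nonzero components of $\vq$ and gets a messier exponent. (You should still record for part (b), as you do for (a) and as the paper does, why the witnesses $\vq$ do not repeat: either $Y_i\vq\in\Z$ for some $i$ and some nonzero $\vq$, in which case $Y$ is trivially VWMA via integer multiples, or $\Pi(Y\vq-\vp)>0$ always, and the bound tending to $0$ forces infinitely many distinct $\vq$.) Part (a) is only cited in the paper, and your sketch of it is the standard argument.

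The genuine gap is in the converse direction of (b), and it is twofold. First, your equalizing choice $t_i=\log(\alpha/w_i)$ with $\alpha=\bigl(\Pi_+(\vq)\,\Pi(Y\vq-\vp)\bigr)^{1/(m+n)}$ requires $\max_i w_i<\alpha$, and the repair you propose --- passing to witnesses with $\max_i w_i$ decaying polynomially in $\Pi_+(\vq)$ --- does not suffice: $\alpha$ is governed by the \emph{product} of the $w_i$, so if one coordinate is only moderately small (say $w_1$ just below $\Pi_+(\vq)^{-\delta/m}$) while another is astronomically small or zero, then $\alpha\ll w_1$ and $t_1<0$ no matter how good the uniform polynomial decay is. The paper avoids this by not equalizing: it fixes a single threshold $r=\Pi_+(\vq)^{-\delta s}$ and proves an elementary redistribution lemma (Lemma~\ref{lem: aux2}) showing that the total expansion $e^t$ can be factored as $\prod_i e^{t_i}$ with $e^{t_i}w_i\le r$ for every $i$; this only needs $w_i<r$ for each $i$ together with $\prod_i w_i\le r^m e^{-t}$, and coordinates with tiny $w_i$ simply absorb more of the expansion. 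Second, the mechanism you invoke to obtain the auxiliary decay of $\max_i w_i$ --- ``a Borel--Cantelli type argument'' --- is not available here: the claim concerns a single fixed VWMA matrix $Y$, not almost every $Y$, so there is no measure to sum. The correct tool (Lemma~\ref{lem: aux1}, following \cite{KLW}) is a pigeonhole argument: one looks at the $q+1$ points $\ell\, Y\vq\bmod 1$, $\ell=1,\dots,q+1$, with $q\approx\Pi_+(\vq)^{\delta_0/(m+n+1)}$, finds two within $q^{-1/m}$ of each other, replaces $\vq$ by the multiple $(j-i)\vq$, and checks that the new pair still satisfies the VWMA inequality with a smaller exponent while in addition satisfying $\|Y\bar\vq-\bar\vp\|<\Pi_+(\bar\vq)^{-\delta/m}$. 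With these two lemmas supplied, the rest of your computation (the admissible range of $\gamma$ in terms of $\delta$) does go through.
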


%

Part (a) is a special case of \cite[Theorem 8.5]{loglaws}. 
Part (b), more precisely, its `$\Rightarrow$' direction, has been worked out
in \cite{KM} and \cite{KLW} in the cases $m = 1$ and $n = 1$ respectively (converse direction is easier and was not required for applications). See also \cite[Theorem 9.2]{loglaws} for a related statement.
The proof of the general case of (b) combines the argument of the aforementioned papers; 
to make this paper self-contained we include the proof of both directions.

\begin{proof}[Proof of Proposition \ref{KM - rate of escape condition}(b)] Start with the `if' part.
Suppose  there exists $\gamma > 0$ and an unbounded subset $\mathcal{T}$ of $\mathcal{A}$ such that whenever $ \vt\in\mathcal{T}$, 
for some $(\vp,\vq)\ne 0$ one has \eq{part 1}{e^{t_{i}}|Y_i\vq - p_i| <  e^{-\gamma
t },\quad i = 1,\dots,m\,,} and 
\eq{part 2}{e^{ -t_{m+j} }|q_{j}| < e^{-\gamma t},\quad j = 1,\dots,n\,.} 
We need to prove that $Y$ is VWMA. 
Let $\ell$ be the number of nonzero components of $\vq$. 
(Note that $\vq\ne 0$,  otherwise from \equ{part 1} it would follow that $\vp = 0$,
hence $(\vp,\vq) =0$.)
Multiplying the inequalities in \equ{part 2} corresponding to $q_i \ne 0$
one gets  $e^{-t}\Pi_{+}(\vq) < e^{-\ell\gamma t}$, or
$\Pi_{+}(\vq) < e^{(1-\ell\gamma) t}$. On the other hand, after multiplying inequalities from \equ{part 1} 
one has
$e^{t} \Pi( Y\vq - \vp ) < e^{-n\gamma t}$,
or 
\eq{concl}{
\Pi( Y\vq - \vp )\leq e^{-(1+n\gamma) t} = (e^{(1-\ell\gamma) t})^{-\frac{1+n\gamma}{1-\ell\gamma}}
< \Pi_{+}(\vq)^{-\frac{1+n\gamma}{1-\ell\gamma}}\,.}
Therefore, \equ{hom} is satisfied with some positive $\delta = \delta(\gamma)$. Finally observe that
$Y$ is obviously VWMA if $Y_i\vq \in\bz$ for some $i$ and $\vq\in\bz^n\nz$: indeed, it suffices to take
integer multiples of $\vq$ to satisfy \equ{hom}. Otherwise, taking $\vt\to\infty$ in $\mathcal{T}$ 
we get infinitely many  $\vq$ for which \equ{concl}, and hence \equ{hom}, holds. 

\medskip

For the other direction, let us
prove two auxiliary lemmas.

\begin{lem}\label{lem: aux1}
Let \amr\ be VWMA.
Then there exists $\,\delta > 0$ for which there are infinitely
many solutions $\vp \in \Z^m$, $\vq \in \Z^n\nz$ to
  \equ{hom} in addition satisfying
\begin{equation}
\label{eq:equation with new condition} \norm {  Y \mathbf q
- \mathbf p } < \Pi_{+}(\mathbf q)^{ - \delta/m  }\,.
\end{equation}
\end{lem}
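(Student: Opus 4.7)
I plan to prove Lemma \ref{lem: aux1} by induction on $m$, combined with a pigeonhole argument on the size profile of the components of $Y\vq-\vp$ and a Dirichlet-type perturbation.

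For the base case $m=1$, the sup norm $\|Y\vq-\vp\|$ coincides with $|Y\vq-p|$, and \equ{hom} with exponent $\delta>0$ reads $|Y\vq-p|<\Pi_+(\vq)^{-1-\delta}<\Pi_+(\vq)^{-\delta}=\Pi_+(\vq)^{-\delta/m}$ whenever $\Pi_+(\vq)\ge 1$, so the additional sup-norm bound is automatic with the same $\delta$.

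For the inductive step $m\ge 2$, assume the lemma for matrices of size $(m-1)\times n$. Extract an infinite sequence $(\vp_k,\vq_k)$ of solutions to \equ{hom} with exponent $\delta_0>0$, and by choosing each $\vp_k$ as the nearest integer vector to $Y\vq_k$ arrange that $|Y_i\vq_k-p_{i,k}|\le 1/2$ for every $i$. Either (Case A) there exists $\delta\in(0,\delta_0)$ for which the sup-norm bound $\max_i|Y_i\vq_k-p_{i,k}|<\Pi_+(\vq_k)^{-\delta/m}$ holds for infinitely many $k$---in which case the lemma is proved---or (Case B) no such $\delta$ works. In Case B, pigeonhole on the argmax index $i_*\in\{1,\dots,m\}$ lets us pass to a subsequence on which $i_*$ is fixed; without loss of generality $i_*=m$, so $|Y_m\vq_k-p_{m,k}|\ge\Pi_+(\vq_k)^{-\delta/m}$ along the subsequence for any prescribed small $\delta>0$. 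Dividing \equ{hom} by this lower bound gives
\[
\prod_{i<m}|Y_i\vq_k-p_{i,k}|<\Pi_+(\vq_k)^{-1-\delta_0+\delta/m},
\]
which exhibits the matrix $Y'\in M_{m-1,n}$ obtained by deleting the $m$-th row of $Y$ as VWMA with exponent $\delta_0-\delta/m>0$ (choosing $\delta<m\delta_0$). The induction hypothesis applied to $Y'$ furnishes a further subsequence for which both the product bound and the partial sup-norm bound $\max_{i<m}|Y_i\vq_k-p_{i,k}|<\Pi_+(\vq_k)^{-\delta''/(m-1)}$ hold, with some $\delta''>0$.

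It remains to upgrade to a sup-norm bound including the $m$-th component. I would do this by perturbing $\vq_k$ to $\vq_k+\vw_k$ with a small integer vector $\vw_k\in\Z^n$ chosen via a simultaneous inhomogeneous Dirichlet argument: $\vw_k$ is selected so that $Y_m\vw_k$ approximates $-(Y_m\vq_k-p_{m,k})$ modulo $\Z$, while $\|Y_i\vw_k\|$ remains negligibly small for $i<m$; existence of such a $\vw_k$ of bounded size follows from Minkowski's theorem applied to an appropriate symmetric convex body in $\R^n$. The main obstacle is this final perturbation step: one must verify that the updated pair $(\vp_k+\operatorname{round}(Y(\vq_k+\vw_k))-\operatorname{round}(Y\vq_k),\,\vq_k+\vw_k)$ still satisfies \equ{hom} with a uniform positive $\delta$, and that $\Pi_+(\vq_k+\vw_k)\asymp\Pi_+(\vq_k)$ so that the two bounds scale together; tracking these conditions through the successive induction layers is the technical heart of the proof.
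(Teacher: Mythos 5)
Your base case and the Case A/Case B dichotomy are fine, and Case B does correctly show that the truncated matrix $Y'\in M_{m-1,n}$ is VWMA. But the inductive step has a genuine gap that the final ``perturbation'' cannot repair. When you apply the induction hypothesis to $Y'$, it does \emph{not} furnish a further subsequence of your $(\vp_k,\vq_k)$: it produces entirely new denominators $\vq'$ (indeed any proof of the lemma must be allowed to manufacture new $\vq$'s), and for these you have no information whatsoever about $|Y_m\vq'-p_m'|$ beyond the trivial bound $1/2$. That is precisely why you need the perturbation, and the perturbation fails for two reasons. First, choosing $\vw\in\Z^n$ so that $Y_m\vw$ approximates a \emph{prescribed} target modulo $\Z$ is an inhomogeneous approximation problem; Minkowski's theorem only produces nonzero lattice points in symmetric convex bodies and says nothing about hitting a given residue. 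Such inhomogeneous problems are not solvable with uniform quality in general (already for $n=1$ there exist pairs $(\alpha,\beta)$ with $\liminf_q q\,\|q\alpha-\beta\|_{\Z}>0$), so no bounded-size $\vw_k$ with the required property need exist. Second, even granting such a $\vw_k$, the product bound on the first $m-1$ rows is destroyed: $|Y_i\vq'-p_i'|$ can be super-polynomially small in $\|\vw_k\|$, so to preserve $\prod_{i<m}|Y_i(\vq'+\vw_k)-p_i''|<\Pi_+(\cdot)^{-(1+\delta)}$ you would need $\|Y_i\vw_k\|_{\Z}$ to be at least as small as these quantities simultaneously for all $i<m$, which a bounded $\vw_k$ cannot achieve for generic $Y$.

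The paper's proof avoids induction entirely and is much lighter. Starting from one solution $(\vp,\vq)$ of \equ{hom} with exponent $\delta_0$, it sets $q=[\Pi_+(\vq)^{\delta_0/(m+n+1)}]$ and applies the pigeonhole principle to the $q+1$ points $\ell Y\vq \bmod 1$, $\ell=1,\dots,q+1$, in $[0,1)^m$, producing $i<j$ with $\|(j-i)Y\vq\|_{\Z^m}\le q^{-1/m}$. The new pair is $\bar\vq=(j-i)\vq$ with $\bar\vp$ the nearest integer vector. Because $\bar\vq$ is an integer \emph{multiple} of the original $\vq$, both bounds are inherited with controlled loss: $\Pi(Y\bar\vq-\bar\vp)\le (j-i)^m\,\Pi(Y\vq-\vp)$ and $\Pi_+(\bar\vq)\le (j-i)^n\,\Pi_+(\vq)$, and since $j-i\le q$ is a small fixed power of $\Pi_+(\vq)$, one checks that \equ{hom} survives with a slightly smaller exponent while the pigeonhole supplies the sup-norm bound. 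The key structural point your approach misses is exactly this: one must modify $\vq$ in a way that keeps all $m$ coordinates of $Y\vq-\vp$ under simultaneous control, and multiplication by an integer (a homogeneous operation, amenable to pigeonhole/Dirichlet) does this, whereas additive perturbation (an inhomogeneous operation) does not.
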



\begin{proof} We follow the argument of \cite{KLW}. Choose $\delta_0>0$
so that we have
\begin{equation}
\label{eq: nvwma} \Pi( Y\vq-\vp) < \Pi_{+}(\vq)^{ -(
1 + \delta_0) }\,,
\end{equation}
for infinitely many $\vp \in \Z^m$, $\vq \in \Z^n$.
Let $\vp, \, \vq$ be a solution to (\ref{eq: nvwma}), and let $$q
\df [ \Pi_{+}(\vq)^{\frac{\delta_0}{m+n+1 } } ]\,.$$ 
We can assume that $\Pi_{+}(\vq)$ is large enough so that
$q
\ge   \frac12\Pi_{+}(\vq)^{\frac{\delta_0}{m+n+1 } }$.
For every $\ell \in
\left\{1 , \dots , q+1 \right\}$ set
\begin{equation*}
\mathbf v_{\ell } \df  \ell Y \mathbf q\, \bmod 1
\end{equation*}
(here the fractional part is taken in each coordinate). Since
$\left\{ \mathbf v_1 , \dots , \mathbf v_{ q+1 } \right\}$ are $q
+ 1$ points in the unit cube $[ 0 , 1 )^m$, there must be two
points, say $\mathbf {v}_ i, \, \mathbf{v}_{j}$, with $1 \leq i
< j \leq q+1$, such that
\begin{equation}
\label{eq:we will use this soon} \norm { \mathbf v_{ i } - \mathbf
v_{ j } } \le q ^{-\frac{1}{m}} \le
(\tfrac12\Pi_{+}(\vq)^{\frac{ \delta_0}{m+n+1} })^{-1/m} = 2^{1/m}\Pi_{+}(\vq)^{-\frac{ \delta_0}{m(m+n+1)} }.
\end{equation}

We set $\bar{\vq} \df (j-i) \vq$ and choose $\bar{\mathbf{p}} \in
\Z^m$ to be an integer vector closest to $ Y\bar{\vq}$. Note
that 
\eq{notethat}{\Pi_{+}(\bar{\vq}) \leq (j-i)^n\Pi_{+}(\vq) \le \Pi_{+}(\vq)^{\frac{n\delta_0}{m+n+1 } + 1}
\,.}
Then by inequality
(\ref{eq:we will use this soon}),
\begin{equation*}
\begin{aligned}
\norm {  Y\bar{\vq}  - \bar{\mathbf{p}} } &\le
2^{1/m}\Pi_{+}(\vq)^{\frac{ \delta_0}{m(m+n+1)} } \under{\equ{notethat}}\le 2^{1/m}
\Pi_{+}(\bar{\vq})^{-\frac{m+n+1 }{m+n+1 + n\delta_0}\frac{ \delta_0}{m(m+n+1)}}\\ &=
2^{1/m}\Pi_{+}(\bar{\vq})^{-\frac{\delta_0}{m(m+n+1 + n\delta_0)}}\,.
\end{aligned}
\end{equation*}
Furthermore,
\begin{equation*}
\label{eq:equation to be continued}
\begin{split}
\Pi( Y\bar{\vq} - \bar{\mathbf{p}}) & \leq
(j -i)^m \Pi( Y\vq -\vp )  \leq \Pi_{+}(\vq)^{\frac{m\delta_0}{m+n+1 } }\, \Pi_{+}(\vq)^{-(1 + \delta_0) } \\
& \under{\equ{notethat}}\leq \Pi_{+}(\bar{\vq})^{-\frac{m+n+1 }{m+n+1 + n\delta_0}
{\frac{m\delta_0-(1 + \delta_0)(m+n+1) }{m+n+1 }}}
\\
&= \Pi_{+}(\bar{\vq})^{-(1+\frac{\delta_0}{m+n+1 + n\delta_0})} \,.
\end{split}
\end{equation*}
This, if we choose a positive $\delta$ not greater than $ \frac{\delta_0}{m+n+1+\delta_0}$
and assume, as we may, that $\Pi_{+}(\bar{\vq})^{\frac{\delta_0}{m+n+1+\delta_0} - \delta}$ is not less than $2$,
we obtain a solution $(\bar{\vp},  \bar{\mathbf{q}})$ to both
\equ{hom} and  (\ref{eq:equation with new condition}).
\end{proof}

\begin{lem}\label{lem: aux2} Suppose we are given $z_1,\dots,z_m \ge 0$, $r \ge 0$ and $C > 1$
such that \eq{given1}{z_i  < r\quad\text{for each }i = 1,\dots,m\,,}
 and  \eq{given2}{\prod_{i = 1}^m z_i  < r^m/C\,.}
Then there exist $C_1,\dots,C_m \ge 1$ such that
\eq{then1}{C = \prod_{i = 1}^m C_i\,, } and \eq{then2}{C_iz_i  \le r \quad\text{for each }i = 1,\dots,m\,.}
\end{lem}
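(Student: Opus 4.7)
The plan is to exhibit the $C_i$ explicitly as a one-parameter family of powers of $r/z_i$, reducing the whole lemma to an intermediate value argument in one real variable. First I would dispose of the degenerate case: if some $z_{i_0} = 0$, simply set $C_{i_0} = C$ and $C_j = 1$ for $j \ne i_0$; then \equ{then1} is automatic, while $C_{i_0} z_{i_0} = 0 \le r$ and $C_j z_j = z_j < r$ by \equ{given1}, giving \equ{then2}.

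Henceforth assume all $z_i > 0$. Set $a_i \df r/z_i$, so that \equ{given1} gives $a_i > 1$ for every $i$, and \equ{given2} translates into $\prod_{i=1}^m a_i > C$. I would look for $C_i$ of the form $C_i = a_i^{t}$ for a single common exponent $t \in [0,1]$; the natural choice is
\[
t_0 \df \frac{\log C}{\log \prod_{i=1}^m a_i},
\]
which lies in $(0,1)$ because $1 < C < \prod a_i$. With this choice $\prod_i C_i = (\prod_i a_i)^{t_0} = C$, so \equ{then1} is immediate, and $C_i = a_i^{t_0} \ge 1$ since $a_i > 1$ and $t_0 > 0$. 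For \equ{then2}, the identity
\[
C_i z_i = (r/z_i)^{t_0} z_i = r^{t_0}\, z_i^{1-t_0}
\]
together with $z_i \le r$ and $1-t_0 > 0$ yields $C_i z_i \le r^{t_0}\cdot r^{1-t_0} = r$.

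I do not anticipate any real obstacle here: the ansatz $C_i = (r/z_i)^{t_0}$ is forced by requiring the two natural boundary conditions to match, namely $\prod C_i(0) = 1$ and $C_i(1)\,z_i = r$, and hypotheses \equ{given1}--\equ{given2} are exactly what is needed to place the target value $C$ strictly between the endpoint products $1$ and $\prod a_i$, so that the intermediate exponent $t_0$ exists in $(0,1)$.
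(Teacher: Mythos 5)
Your proof is correct, but it takes a genuinely different route from the paper's. The paper sorts the $z_i$ in decreasing order and builds the $C_i$ greedily: $C_i = \min\bigl(r/z_i,\ C/\prod_{j<i}C_j\bigr)$, assigning to each factor as much of the remaining budget as the constraint $C_iz_i\le r$ allows, and then argues that hypotheses \equ{given1} and \equ{given2} force the budget to be exhausted exactly at $i=m$, which yields \equ{then1}. Your argument instead writes down a closed-form solution, $C_i=(r/z_i)^{t_0}$ with the single exponent $t_0=\log C/\log\prod_i(r/z_i)\in(0,1)$, so that \equ{then1} holds by construction and \equ{then2} follows from the one-line interpolation $C_iz_i=r^{t_0}z_i^{1-t_0}\le r$. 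What each buys: your version eliminates the ordering, the induction, and the two-case analysis of which term of the minimum is active, at the cost of introducing logarithms and a separate (trivial) treatment of the case $z_{i_0}=0$, which the paper absorbs painlessly via the convention $r/0=\infty$. Both arguments are elementary and complete; yours is arguably the more transparent existence proof, while the paper's greedy construction stays entirely within arithmetic with the given quantities.
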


\begin{proof} Without loss of generality assume that $z_m \le \dots \le z_1$.
Then define $C_0 = 1$ and inductively \eq{choice}{C_i = \min\big(\frac r{z_i}, \frac C{ \prod_{j = 0}^{i-1} C_j }\big)}
(here we use the convention $r/0 = \infty$).
The validity of  \equ{then2} is clear, and it follows from \equ{given1} that if  for some $i$ the first term
in the right hand side of   \equ{choice} is not less than the second one, the same
will happen for all the subsequent values of $i$. Also it follows from  \equ{given2} that 
a scenario under which $r/z_i < C/ \prod_{j = 0}^{i-1} C_j$ for all $i = 1,\dots,m$ is impossible.
Therefore for $i = m$ the minimum in  \equ{choice} is equal to the second term, implying \equ{then1}.\end{proof}

Now let us get back to the proof of the remaining part of Proposition  \ref{KM - rate of escape condition}(b). Suppose that  $Y$ is VWMA; in view of Lemma \ref{lem: aux1} we can assume that for some
$\delta >0$ there are infinitely many solutions to both 
\equ{hom} and (\ref{eq:equation with new condition}). Take an arbitrary positive $s < \frac1{m+n}$,
and for each solution $(\vp,\vq)$,
let $r = \Pi_{+}({\vq})^{-\delta s}$ and define $t_{m+1},\dots,t_n$ by 
$${|q_j|_{+} = r e^{t_{m+j}}\,.}$$
 Then $e^{-t_{m+j}}|q_j|\le e^{-t_{m+j}}|q_j|_+ = r$ and $\Pi_{+}({\vq}) = r^n e^t = \Pi_{+}({\vq})^{-\delta n s} e^t$,
 hence $ r =  e^{- \frac{\delta s}{1 + \delta n s}t}$ and $\Pi_{+}({\vq}) = e^{ \frac{1}{1 + \delta n s}t}$. Thus, denoting $\gamma =  \frac{\delta s}{1 + \delta n s}$, we have $e^{-t_{m+j}}|q_j|\le e^{-\gamma t}$ for $j = 1,\dots,n$. To finish the proof we need to find
 $t_1,
\dots,t_m \ge 0$ with $t = t_1 + \dots + t_m$ such that $e^{t_{i}}|Y_i\vq - p_i| \le e^{-\gamma
t }$ for each $i$; this would clearly imply the linear growth of  $g_{\mathcal{A}}  \Lambda_Y$.

For that, let us denote $z_i = |Y_i\vq - p_i|$ and $C = e^t$, and check  \equ{given1} and \equ{given2}:
in view of 
(\ref{eq:equation with new condition}), we have 
$$
z_i \le \Pi_{+}(\mathbf q)^{ - \delta/m  } = r^{1/ms} < r
$$
since $s < 1/m$, and also, in view of \equ{hom},
$$
\prod_{i = 1}^m z_i  =  \Pi( Y\vq -\vp ) \le \Pi_{+}(\mathbf q)^{ - (1 + \delta)  } =  e^{ -\frac{1 + \delta}{1 + \delta n s}t} = e^{-t}  e^{ -\frac{ \delta(1 - ns)}{1 + \delta n s}t}  =  e^{-t} r^{\frac{1-ns}{s}}\,,
$$
and the latter is not greater than $ r^m/C$ since $s < \frac1{m+n}$. Taking  $e^{t_i} = C_i $
where $C_1,\dots,C_m \ge 1$ are
as in Lemma \ref{lem: aux2}  finishes the proof. \end{proof}

\noindent {\bf Remark.} It easily follows from the continuity of the $G$-action on $\Omega$ that
whenever $\fs$ is a subset of $\mathcal{T}$ of bounded Hausdorff distance 
from $\mathcal{T}$ (that is, $\mathcal{T}$ is contained in the $r$-neighborhood of $\fs$ for some 
$r > 0$), $g_{\mathcal{T}}  \Lambda$ has linear growth if and only if so does $g_{\fs}  \Lambda$. 
In particular, without loss of generality we can take sets $\mathcal{T}$ to be countable, e.g.\ replace 
$\fa$ with the set of vectors in $\fa$ with integer coordinates. See some more explanations in the proof of \cite[Corollary 2.2]{KM}.

\medskip

The correspondence of Proposition \ref{KM - rate of escape condition}
will be instrumental in our deduction of the main results of this paper
from measure estimates on the space of lattices, following the method first introduced in \cite{KM}. 
Indeed, in view of the proposition, proving the extremality or strong extremality of  $F_*\nu$ is equivalent to showing that
for arbitrary  positive $\gamma$, $\nu$-almost every $\x$  is contained in at most finitely many sets
$\{\x : g_{\vt} \Lambda_{F(\x)}
\notin K_{e^{-\gamma t}}\big\}$, where  $\ft$ is either $\fr$ or $\fa$ and $\vt\in\ft$ has
integer coordinates. The latter will follow from the Borel-Cantelli Lemma and  estimates of type
\eq{measest}{
\nu\left(
\big\{\x\in B : g_{\vt} \Lambda_{F(\x)}
\notin K_\vre\big\} \right) \le \const \cdot \vre^\alpha \nu(B)\,,
} 
where $B\subset U$ is a ball and $\alpha > 0$.


\ignore{Let us
summarize this reduction in the following way:

\begin{cor}
\name{cor: reduction} Let    an open subset  $U$ of $\R^d$, 
  a map $F: U\to\mr
$
and  a 
 measure  $\nu$  on
$U$ be given. Also fix an unbounded $\mathcal{T}\subset\fa$, and suppose that for $\nu$-almost every point $\vx_0$ of $U$ 
there exists a ball $B\subset U$ centered at $\vx_0$ and positive $E, \alpha,\vre_0, t_0$
such that for any $\,0 < \vre \le \vre_0$ and $\vt\in\mathcal{T}$ with $\|\vt\| \ge t_0$ 
one has
\eq{measest}{
\nu\left(
\big\{\x\in B : g_{\vt} \Lambda_{F(\x)}
\notin K_\vre\big\} \right) \le E\vre^\alpha \nu(B)\,.}
Then $\nu\big(\{\x\in U \mid g_{\mathcal{T}}  \Lambda_{F(\x)}\text{  
 has linear growth}\}\big) = 0$.
\end{cor}

\begin{proof} 
Choose a countable subset  $\fs$ of $ \mathcal{T}$ with finite Hausdorff
 distance from $ \mathcal{T}$ and such that $\inf_{\vt_1,\vt_2\in \fs,
 \vt_1\ne\vt_2}\| \vt_1-\vt_2\| > 0$.
Taking $\vre =e^{-\gamma t}$ for an arbitrary positive $\gamma$, 
we derive from the assumption of the theorem  that for 
$\nu$-a.e.\  $\x_0\in U$  there exists a ball $B\subset U$
centered at $\x_0$ such that 
$$
\sum_{\T\in\fs}\nu\left(
\big\{\x\in B : g_{\vt} \Lambda_{F(\x)}
\notin K_{e^{-\gamma t}}\big\} \right) < \infty\,.$$
Applying   the Borel-Cantelli Lemma, one concludes that for $\nu$-a.e.\  $\x\in B$ one has
$g_{\vt} \Lambda_{F(\x)}
\in K_{e^{-\gamma t}}$ for all
but finitely many $\T \in \fs$, which, in view of the remark before Proposition \ref{KM - rate of escape condition}, implies that $g_{\mathcal{T}}
\Lambda_{F(\x)}$
 has linear growth for $\nu$-almost no $\vx$.
\end{proof}

A similar approach works for the problems concerning
 the improvability of Dirichlet's Theorem, as shown in \cite{KW}.  From  \equ{expl tau} it immediately follows
that  the fact that \equ{mdtw} has a nontrivial integer solution is equivalent to saying that
the lattice $g_\vt\Lambda_Y$ has a nonzero vector of norm less than $\vre$.
Thus for \amr, $\vre > 0$ and an unbounded $\mathcal{T}\subset\fa
$, 
one has $Y\in\DI_\vre(\mathcal{T})$ if and only if  there exists $t_0 > 0$ such that $g_\vt \Lambda_Y\notin K_\vre$ 
 for all $\vt\in\mathcal{T}$
with $\|\vt\| \ge t_0$ \cite[Proposition 2.1]{KW}. This was used in \cite{KW} for extracting information about the set $\DI_\vre(\mathcal{T})$ from estimates of type  \equ{measest}. Namely, the following is a slight generalization of \cite[Proposition 3.1]{KW}:

\begin{cor}
\name{cor: reduction dt} Let      $U$, 
$F$,  $\nu$ and  $\mathcal{T}\subset\fa$
 be as in Corollary \ref{cor: reduction}, and let $0 < \vre, c < 1$ be given.
Suppose that for $\nu$-almost every point $\vx_0$ of $U$ there exists $r_0> 0$ 
such that for any ball $B\subset U$ centered at $\vx_0$ of radius less than $r_0$
one can find $t_0$ such that for any $\vt\in\mathcal{T}$ with $\|\vt\| \ge t_0$ one has
\eq{measest dt}{
\nu\left(
\big\{\x\in B : g_{\vt} \Lambda_{F(\x)}
\notin K_\vre\big\} \right) \le c \nu(B)\,.}
Then 
$\nu\big(\{\x\in U \mid F(\x)\in\DI_\vre(\mathcal{T})\}\big) = 0$.
\end{cor}

\begin{proof} In view of the remark before the statement of the corollary, $ F(\vy)\in\DI_\vre(\mathcal{T})$
 if and only if $\vy$ belongs to
\eq{unionint}{
 \bigcup_{t_0 > 0}\ \ \bigcap_{\vt\in\mathcal{T},\,\|\vt\| \ge t_0}\big\{\x : g_{\vt} \Lambda_{F(\x)}
\notin K_\vre\big\}\,.}
Thus we can infer from the assumption of the theorem  that for $\nu$-almost
 every $\vx_0\in U$ there exists $r_0 > 0$ 
such that for any ball $B\subset U$ centered at $\vx_0$ of radius less than
$r_0$,
the measure of the intersection of the set 
 \equ{unionint}
with $B$ is  not bigger than $ c \nu(B)$. 
Since $c  $ is assumed to be less than $1$, 
in view of a density theorem for Radon measures on Euclidean spaces
\cite[Corollary 2.14]{Mattila}, this forces  the set 
 \equ{unionint} to have $\nu$-measure zero.
\end{proof}

Our strategy for establishing results on extremality and strong extremality 
 will be to prove 
estimates of type  \equ{measest} 
for certain $F$ and $\nu$,
and then use the above corollaries
to draw the needed conclusions. }

Note that so far whenever the norm $\|\cdot\|$ on a finite-dimensional vector space
was used, in particular in the definition of the sets $K_\vre$, it was meant to be
the `maximum' norm. 
However replacing it by 
another norm would
result only in changes up to fixed multiplicative constants, and therefore Proposition \ref{KM - rate of escape condition} 
will remain true regardless of the norm used to define  $K_\vre$.
In what follows, for geometric reasons it will be convenient to describe sets
$K_\vre$ using Euclidean norm  $\|\cdot\|$ on $\R^{k}$ induced by the standard inner product
$\langle \cdot,\cdot \rangle$.

Note also that the geometry of $\Omega$ at infinity can be similarly described using other representations
of $G$, for example on higher exterior powers of $\R^k$. 
It will be convenient to  denote by $
\mathcal{W}_\ell$, where 
$1\le \ell \le k$, the set of elements $\vw = \vv_1\wedge \dots\wedge \vv_\ell$ of 
$\bigwedge^\ell(\Z^{k})$ where $\{\vv_1, \dots, \vv_\ell\in \Z^{k}\}$ can be completed
to a basis of $\Z^{k}$ (those are called {\sl primitive\/} $\ell$-tuples). In fact, up to a sign
elements of $\mathcal{W}_\ell$ can be identified with rational $\ell$-dimensional subspaces of 
$\R^{k}$, or, equivalently, with primitive subgroups of 
$\Z^{k}$ of rank $\ell$. We also let $$
\mathcal{W} \df \cup_{1\le \ell \le k}\mathcal{W}_\ell\subset \textstyle\bigwedge (\Z^{k})\,.$$
The  Euclidean norm and the inner product will  be extended  from
$\R^{k}$ to its exterior algebra; this way $\|\vw\|$ is equal to the covolume of the subgroup corresponding to $\vw$.
Then for $\varepsilon>0$  define
\begin{equation*}
\tilde K_{{\varepsilon}} \stackrel{\mathrm{def}}{=} \big\{g\Z^k\in\Omega \bigm| \Vert 
g\vw \Vert \geq {\varepsilon} \quad \forall\, \vw \in
\mathcal{W}\big\}\,.
\end{equation*}
Clearly $\tilde K_{{\varepsilon}} \subset  K_{{\varepsilon}}$; on the other hand it easily follows
from Minkowski's Lemma that for any positive $\vre$ one has
 $ K_{{\varepsilon}} \subset  \tilde K_{{c\varepsilon^{1/k}}}$ where $c > 0$ depends only on $k$. Therefore
 the following holds:

\begin{lem}\name{lem: higher lg} Given   $\mathcal{T}\subset \fa
$ and  $\Lambda\in\Omega$, $g_{\mathcal{T}}  \Lambda$ 
has linear growth
if and only if there exists $\gamma > 0$ such that
\eq{higher lg}{
g_{\T} \Lambda \notin \tilde K_{e^{-\gamma t}} \text{ 
 \ for an unbounded set of 
}
\vt\in\mathcal{T}
\,.
}
\end{lem}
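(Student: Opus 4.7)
The plan is to deduce the lemma directly from the two inclusions $\tilde K_{\vre} \subset K_{\vre}$ and $K_{\vre} \subset \tilde K_{c\vre^{1/k}}$ highlighted in the paragraph just preceding the statement, by passing to complements and tracking how the exponent of decay changes.

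For the forward direction, suppose $g_{\mathcal{T}} \Lambda$ has linear growth, so that there exists $\gamma > 0$ and an unbounded set $\mathcal{T}' \subset \mathcal{T}$ with $g_{\vt}\Lambda \notin K_{e^{-\gamma t}}$ for all $\vt \in \mathcal{T}'$. Taking complements in the inclusion $\tilde K_{e^{-\gamma t}} \subset K_{e^{-\gamma t}}$ gives $\Omega \setminus K_{e^{-\gamma t}} \subset \Omega \setminus \tilde K_{e^{-\gamma t}}$, so the same set $\mathcal{T}'$ witnesses \equ{higher lg} with the same $\gamma$.

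For the converse, suppose there exists $\gamma > 0$ and an unbounded $\mathcal{T}' \subset \mathcal{T}$ with $g_{\vt} \Lambda \notin \tilde K_{e^{-\gamma t}}$ for $\vt \in \mathcal{T}'$. Taking complements in the inclusion $K_{\vre} \subset \tilde K_{c\vre^{1/k}}$ and setting $c\vre^{1/k} = e^{-\gamma t}$, i.e.\ $\vre = c^{-k} e^{-k\gamma t}$, one sees that $g_{\vt}\Lambda \notin K_{c^{-k} e^{-k\gamma t}}$ for all $\vt \in \mathcal{T}'$. Now fix any $\gamma' \in (0, k\gamma)$; since the quantity $c^{-k} e^{-k\gamma t} / e^{-\gamma' t} = c^{-k} e^{(\gamma' - k\gamma) t}$ tends to $0$ as $t \to \infty$, the inequality $c^{-k} e^{-k\gamma t} \le e^{-\gamma' t}$ holds for all sufficiently large $t$. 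By the monotonicity of the family $\{K_\vre\}$ (namely $K_{\vre_1} \supset K_{\vre_2}$ when $\vre_1 \le \vre_2$), this yields $g_{\vt} \Lambda \notin K_{e^{-\gamma' t}}$ for all but finitely many $\vt \in \mathcal{T}'$, which is still an unbounded collection. Hence $g_{\mathcal{T}} \Lambda$ has linear growth with exponent $\gamma'$.

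There is essentially no obstacle here: the content of the lemma is entirely packaged in the two set-theoretic inclusions, and the only bookkeeping needed is the exponent adjustment $\gamma \mapsto \gamma'$ (with the factor $c^{-k}$ absorbed into finitely many values of $\vt$). The only mildly nontrivial input is the inclusion $K_{\vre} \subset \tilde K_{c\vre^{1/k}}$, which the authors attribute to Minkowski's Lemma and which I would take as given.
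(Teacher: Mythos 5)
Your proof is correct and follows exactly the route the paper intends: the lemma is stated there as an immediate consequence of the two inclusions $\tilde K_{\vre}\subset K_{\vre}$ and $K_{\vre}\subset \tilde K_{c\vre^{1/k}}$, and you have simply written out the routine exponent bookkeeping ($\gamma\mapsto\gamma'<k\gamma$, absorbing $c^{-k}$ into large $t$) that the authors leave implicit.
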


\noindent {\bf Remark.} One can also use Proposition \ref{KM - rate of escape condition} 
for an alternative proof  of the multiplicative version of Khintchine's Transference Principle \cite{SW}, that is, 
the equivalence 
of $Y$ and $Y^T$ being VWMA. Indeed, let $\sigma$ be the linear
transformation of $\R^{k}$ induced by the permutation on the $k$ coordinates which
exchanges the group of the first $m$ of them with that of the last $n$, without reordering within groups,
and denote by $\varphi$ the automorphism of $G$ given by $\varphi(g) = \sigma\big((g^T)^{-1}\big)\sigma^{-1}$ for all $g\in G$. Then it is easy to see that $\varphi(g_\vt) = g_{\sigma(\vt)}$ and $\varphi(u_Y) = u_{-Y^T}$.  Since
$\varphi(\Gamma) = \Gamma$, the automorphism $\varphi$ induces a self-map of $\Omega$ which
we can also denote by $\sigma$; geometrically it can be interpreted as $\varphi(\Lambda) = \sigma(\Lambda^*)$ where $\Lambda^*$ is the lattice dual to $\Lambda$. Now the desired equivalence follows from an observation that
 $\varphi(K_\vre) \subset K_{c\vre^{k-1}}$ for all $\vre > 0$, where $c$ is a constant dependent only on $k$.

\section{Quantitative  nondivergence and its applications}
\name{nondiv}

During the last decade, starting from the paper 
\cite{KM}, quantitative nondivergence estimates for unipotent
trajectories on the space of lattices evolved into a powerful 
method
yielding measure estimates as in \equ{measest} for a certain  
broad class of measures $\nu$ and maps $F$. Recall that the  sets
in the left hand side of  \equ{measest} 
consist
of those $\x$ for which the lattice  $g_{\vt} \Lambda_{F(\x)}$ has 
a vector of length less than $\vre$. The crucial ingredient of the method
 is a way to keep track not just of length of vectors in that lattice, but 
 of covolumes of subgroups of arbitrary dimension. The following is our main estimate:

\begin{thm}[\cite{KLW}, Theorem 4.3]
\name{thm: friendly nondivergence}
Given $d,k\in\N$ and 
positive constants $\tilde C,D,\alpha$,
         there exists $C' = C'(d,k,\tilde C,\alpha,D) > 0$
with the following property.
Suppose  a measure  $\nu$  on $\R^d$ is  $D$-Federer on 
a ball $\til B$ centered at  $\supp\,\nu$, $0 < \rho \le 1$,
and $h$ is aa continuous map $\til B
\to G$ 
such that
for each $\vw \in \mathcal{W}$,
\begin{itemize}
\item[(i)]
the function   $\x\mapsto \|h(\x)\vw\|$  is $(\tilde C,\alpha)$-good on $\til B
$
with respect to
$\nu$,
           \end{itemize}
and
\begin{itemize} \item[(ii)]
\label{item: attain rho}
$  \|h(\x)\vw\|\geq \rho$ for some $\x\in\supp\,\nu \cap B$, where $B = 3^{-(k-1)}\til B$.
          \end{itemize}
Then for any $\,0<
\varepsilon
\leq \rho$,
$$
{\nu\big(\big\{\x \in B: 
{h}(\x)\Z^k
 \notin K_{\varepsilon}
\big\}\big)}\le C'
(\varepsilon/\rho)^{\alpha}{\nu(B)} \,.
$$
\end{thm}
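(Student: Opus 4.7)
The plan is to use the quantitative non-divergence scheme originating in \cite{KM} and extended to Federer measures in \cite{KLW}. The left-hand set equals $\bigcup_{\vw\in\mathcal{W}_1}\{\x\in B:\|h(\x)\vw\|<\vre\}$, but a term-by-term union bound is useless since $\mathcal{W}_1$ is infinite. The key observation is that at each point $\x$ one must track the covolumes $\|h(\x)\vw\|$ for all primitive subgroups $\vw\in\mathcal{W}$ of \emph{every} rank $1\le\ell\le k$ simultaneously, and argue by induction on the rank.

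Concretely, I would prove by induction on $\ell=1,2,\dots,k$ the following scheme: for any ball $B'$ centered in $\supp\,\nu$ with $3^{\ell-1}B'\subset\tilde B$ and such that for every $\vw\in\mathcal{W}$ of rank $\le\ell$ one has $\|h(\cdot)\vw\|_{\nu,\,3^{\ell-1}B'}\ge\rho$, the estimate
\[
\nu\big(\{\x\in B':\|h(\x)\vw\|<\vre\text{ for some }\vw\in\mathcal{W}_1\}\big)\le C_\ell(\vre/\rho)^\alpha\nu(B')
\]
holds for some $C_\ell=C_\ell(d,k,\tilde C,D,\alpha)$. The base case $\ell=1$ follows from the hypothesis by a direct appeal to $(\tilde C,\alpha)$-goodness of $\|h(\cdot)\vw\|$ for each $\vw\in\mathcal{W}_1$ that can possibly be short on $B'$ (a finite set, by a volume comparison using Minkowski's lemma). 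For the inductive step $\ell\to\ell+1$, subdivide $B'$ into a bounded-overlap covering by sub-balls whose enlargement is controlled by the $D$-Federer property; on each sub-ball split into two cases. Either every $\vw\in\mathcal{W}$ of rank $\le\ell+1$ still has $\|h(\cdot)\vw\|_{\nu,\text{sub-ball}}\ge\rho$ (apply the inductive hypothesis), or some primitive subgroup $\vw_*$ of rank $\le\ell+1$ drops below $\rho$ on that sub-ball; in the latter case the factor-$3$ enlargement reaches a point where $\|h(\cdot)\vw_*\|\ge\rho$, and the $(\tilde C,\alpha)$-good property applied to $\|h(\cdot)\vw_*\|$ yields the required bound. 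For $\ell=k$ the hypothesis on rank-$k$ primitive vectors is automatic (the unique such $\vw$ satisfies $\|h(\x)\vw\|\equiv 1$ since $h(\x)\in G=\SL_k(\R)$), and hypothesis (ii), combined with $(\tilde C,\alpha)$-goodness propagating the value $\rho$ from one point to a supremum over a slightly larger ball, verifies the induction hypothesis at every intermediate rank on $B=3^{-(k-1)}\tilde B$.

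The main obstacle is the combinatorial bookkeeping around the iterated covering. Each of the $k-1$ inductive steps expands the working ball by a factor of $3$ in order to invoke $(\tilde C,\alpha)$-goodness on a ball containing a point where the relevant covolume reaches $\rho$; this iterated expansion is precisely what dictates the shrinkage factor $3^{-(k-1)}$ in the relation $B=3^{-(k-1)}\tilde B$. A secondary subtlety, essential in the non-Lebesgue setting of \cite{KLW}, is that the Besicovitch covering theorem is unavailable, so its role must be played by the $D$-Federer assumption: the doubling property both controls the measure after passing to a concentric smaller ball and bounds the overlap in the covering used at each inductive step, which is what keeps the constant $C'$ depending only on $d$, $k$, $\tilde C$, $\alpha$, and $D$ rather than on $\nu$ or $h$ themselves.
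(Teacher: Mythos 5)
First, a point of comparison: the paper does not prove this statement at all — it is quoted verbatim from \cite{KLW} (Theorem 4.3 there), which in turn descends from \cite[Theorem 5.2]{KM} — so your proposal can only be measured against the proof in those references. Your outline does reproduce the broad architecture of that proof: induction on the rank of primitive subgroups, factor-$3$ ball enlargements accounting for the shrinkage $B=3^{-(k-1)}\til B$, a covering argument, and the Federer condition to compare measures of concentric balls. But two steps would fail as written.

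In the base case you take a union bound over ``the finite set of $\vw\in\fw_1$ that can be short on $B'$.'' That set is finite for a fixed continuous $h$, but its cardinality is not controlled by $d,k,\til C,\alpha,D$ (a unimodular lattice containing a primitive rank-two subgroup of tiny covolume has arbitrarily many primitive vectors of norm $<\rho$), so this route yields a constant depending on $h$, which is exactly what the theorem must avoid. The actual proof never sums over subgroups: it covers the bad set by balls centered at points of that set, attaches to each ball a \emph{single} subgroup whose covolume function crosses the threshold $\rho$ there, applies $(\til C,\alpha)$-goodness to that one function, and controls the overlap by the Besicovitch constant of $\R^d$. (Incidentally, the Besicovitch covering theorem is valid for arbitrary Radon measures on $\R^d$ and is used in \cite{KLW}; the $D$-Federer hypothesis plays the separate role of comparing $\nu(B')$ with $\nu(3B')$.) The second and more serious gap is in the inductive step: bounding the locus where some higher-rank $\vw_*$ satisfies $\|h(\cdot)\vw_*\|<\rho$ does not by itself bound the locus where a rank-one vector is shorter than $\vre$, since the latter set is not contained in $\{\x:\|h(\x)\vw_*\|<\vre\}$. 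The link is the submultiplicativity of covolumes along flags, $\|h(\x)(\Delta+\Z\vv)\|\le\|h(\x)\Delta\|\cdot\|h(\x)\vv\|$, which forces every bad point to carry a flag of subgroups with small covolumes and hence some subgroup whose covolume function falls into the window where $(\til C,\alpha)$-goodness bites; making this precise is the marked-point/poset formalism that constitutes the heart of \cite[\S4--5]{KM} and of \cite[\S4]{KLW}, and it is absent from your sketch. Either cite the result, as the paper does, or reproduce that formalism in full.
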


This theorem has a long history, starting from Margulis' proof of non-divergence of unipotent flows
\cite{Mar:non-div},
and continuing with a series of papers by Dani \cite{Dani:inv, Dani:unip, Dani:unip2}. The way it appeared in  \cite{KLW} is 
essentially the same as in \cite{KM} but slightly generalized. The crucial step made in 
 \cite{KM} was the introduction of the requirement (condition labeled by (i) in the
 above theorem) that covolumes of subgroups
should give rise to \cag\ functions; this made it possible to significantly expand the applicability of the estimates. In particular, when 
 %
\eq{def h}{h(\x) = g_\vt u_{F(\x)}\,,}
where $\vt\in\fa$ and $F$ is a map from $U$ to $\mr$,
condition (i) will hold for balls $\tilde B$ centered at $\nu$-generic points
as long as $F$ and $\nu$ satisfy assumption (i) of Theorem  \ref{thm: strexgeneral}. 
To show this,
it will be helpful  to have explicit expressions for the coordinate functions of  $g_\vt u_{F(\x)}\vw$. 
Let us denote by $\{\ve_{1},\dots,\ve_{m},\vv_{1},\dots,\vv_{n}\}$ the standard basis of
$\R^{k}$. 
Then one has
\eq{actionbasis}{u_Y\ve_i = \ve_i
\quad\text{ and }\quad u_Y\vv_{j} = \vv_{j} + \sum_{i = 1}^m y_{i,j}\ve_i =\vv_{j} + \vy_j
\,,
}
where in the latter equality we have  identified
the columns  $\vy_1,\dots,\vy_n$ of $Y$ with elements of $E \df \Span(\ve_{1},\dots,\ve_{m})$ via the correspondence $\vy_j \leftrightarrow 
\sum_{i = 1}^m y_{i,j}\ve_i$. 

Now take
$I =
\{i_1,\dots,i_{r}\}\subset \{1,\dots,m\}$ and $J =
\{j_1,\dots,j_{s}\}\subset \{1,\dots,n\}$,
where $i_1 <\dots <i_{r}$ and $j_1<\dots<j_{s}$, and
consider
$\ve_{I} \df 
\ve_{i_1}\wedge\dots\wedge \ve_{i_{r}}$ and $\vv_{J} \df 
\vv_{j_1}\wedge\dots\wedge \vv_{j_{s}}$, with the convention $\ve_\vrn = \vv_\vrn = 1$. 
                              For any $1\le \ell \le k$, 
elements \eq{def basis}{\ve_{I}\wedge\vv_{J},\text{ where }I\subset \{1,\dots,m\} ,\ J\subset \{1,\dots,n\},\ |I| + |J| = \ell\,,} form a basis of $\bigwedge^{\ell}(\R^{k})$. Then one can write
\eq{action}{
\begin{split}
u_Y(\ve_{I}\wedge\vv_{J}) &= \ve_{I}\wedge(\vv_{j_1} + \sum_{i = 1}^m y_{i,j_1}\ve_i)\wedge\cdots\wedge
(\vv_{j_s} + \sum_{i = 1}^m y_{i,j_s}\ve_i) \\ &=
\sum_{L\subset
J}\sum_{\substack{K\subset  \{1,\dots,m\}\ssm I,\,
\\ |K| = |L|}}\pm y_{K,L}\ve_{I\cup K}\wedge\vv_{J\ssm L}
\,,\end{split}
}
where $y_{K,L}$ is defined as in \equ{def yij}, and the choice of sign in $\pm$ depends 
on $K$ and $L$. 

Now we can easily establish

\begin{lem}\label{lem: good}  Let $d,k\in\N$ and $C,\alpha > 0$, and suppose $\til B$ is a ball in $\R^d$, $\nu$ is a measure on  $\til B$,   and $F:\til B\to\mr$ is a continuous map such that $(\vd \circ F,\nu)$ is 
\cag\ on  $\til B$.
Then functions   $\x\mapsto \|g_\vt u_{F(\x)}\vw\|$  are $(N^{\alpha/2} C,\alpha)$-good on $\til B
$
with respect to
$\nu$  for any $\vt\in\fa$ and $\vw\in \mathcal{W}$,  where $N$ 
is as in \equ{def n}.
\end{lem}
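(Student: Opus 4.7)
The plan is to express $\|g_\vt u_{F(\x)}\vw\|$ as the Euclidean norm of a vector whose components are explicit linear combinations of $1$ and the coordinates of $\vd\circ F$, then apply the $(C,\alpha)$-goodness hypothesis componentwise together with a standard bundling trick.

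Concretely, suppose $\vw\in\mathcal{W}_\ell$ for some $\ell$ and expand $\vw=\sum_{I,J:\,|I|+|J|=\ell}w_{I,J}\,\ve_I\wedge\vv_J$ in the basis \equ{def basis} of $\bigwedge^\ell(\R^k)$. Applying \equ{action} term by term and collecting the coefficient of each $\ve_{I'}\wedge\vv_{J'}$, one obtains
\[
u_{F(\x)}\vw=\sum_{I',J'}\phi_{I',J'}(\x)\,\ve_{I'}\wedge\vv_{J'},
\]
where each $\phi_{I',J'}$ is a fixed $\R$-linear combination (with coefficients depending only on the $w_{I,J}$ and on signs) of minors $y_{K,L}(F(\x))$ with $|K|=|L|$, including the constant $y_{\vrn,\vrn}=1$. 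Hence each $\phi_{I',J'}$ is a linear combination of $1,f_1,\dots,f_N$, where $f_1,\dots,f_N$ are the coordinates of $\vd\circ F$. Since $g_\vt$ acts diagonally on the orthonormal basis $\{\ve_{I'}\wedge\vv_{J'}\}$ by the scalars $e^{\sum_{i\in I'}t_i-\sum_{j\in J'}t_{m+j}}$, we get
\[
\|g_\vt u_{F(\x)}\vw\|^{2}=\sum_{I',J'}\tilde\phi_{I',J'}(\x)^{2},
\]
where $\tilde\phi_{I',J'}$ is also a linear combination of $1,f_1,\dots,f_N$. By the hypothesis that $(\vd\circ F,\nu)$ is $(C,\alpha)$-good on $\tilde B$, every such linear combination is $(C,\alpha)$-good on $\tilde B$ with respect to $\nu$.

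To finish, I would invoke the standard fact that if $\eta_1,\dots,\eta_M$ are $(C,\alpha)$-good on $\tilde B$ with respect to $\nu$, then so is $\psi\df\sqrt{\sum_i\eta_i^{2}}$, with constant $M^{\alpha/2}C$: indeed $\{|\psi|<\vre\}\subset\{|\eta_{i_0}|<\vre\}$ for the index $i_0$ maximizing $\|\eta_i\|_{\nu,B}$, and $\|\psi\|_{\nu,B}\le\sqrt{M}\,\|\eta_{i_0}\|_{\nu,B}$, which combine to give the claim. Finally, to match the constant $N^{\alpha/2}C$ rather than the naive $\binom{k}{\ell}^{\alpha/2}C$, I would reduce the effective number of coordinates as follows: the map $\R^{N+1}\to\R^{M}$ sending $(1,f_1(\x),\dots,f_N(\x))$ to $(\tilde\phi_{I',J'}(\x))$ is a fixed linear map of rank at most $N+1$, and an orthogonal change of basis in $\bigwedge^\ell(\R^k)$ (e.g.\ via its singular value decomposition) rewrites $\|g_\vt u_{F(\x)}\vw\|^{2}$ as a sum of at most $N+1$ squared linear combinations of $1,f_1,\dots,f_N$, one of which can be absorbed into the others using the fact that one coordinate is the constant $1$.

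The main obstacle is purely bookkeeping: tracking the combinatorial expansion \equ{action} to confirm that every $\phi_{I',J'}$ really is a linear combination of $1,f_1,\dots,f_N$ with scalars depending only on $\vw$, and performing the rank reduction cleanly so that the constant comes out as $N^{\alpha/2}C$ rather than the weaker bound one gets by taking all $\binom{k}{\ell}$ coordinates at face value.
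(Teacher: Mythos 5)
Your argument coincides with the paper's proof: expand $u_{F(\cdot)}\vw$ via \equ{action} in the basis \equ{def basis}, which consists of eigenvectors of $g_\vt$, note that each coordinate is a fixed linear combination of $1$ and the components of $\vd\circ F$ (hence $(C,\alpha)$-good by hypothesis), and bundle the coordinates with the root-sum-of-squares property \cite[Lemma 4.1]{KLW}, whose proof you correctly sketch. The only place you go beyond the paper is the attempt to sharpen the constant: the singular-value-decomposition step legitimately reduces the count of squares from $\binom{k}{\ell}$ to at most $N+1$, but the final ``absorb one coordinate because one of them is the constant $1$'' does not yield a reduction to $N$ --- and none is needed, since the paper itself simply applies the bundling lemma to all coordinates and only the existence of \emph{some} constants $(\tilde C,\alpha)$ is ever used afterwards (e.g.\ in Theorem \ref{thm: criterion}).
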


\begin{proof} Take  $\vw\in\bigwedge^{\ell}(\R^{k})$. In view of \equ{action},
each coordinate  of $u_{F(\cdot)}\vw$ with respect to the
basis \equ{def basis} is a linear combination of functions $F(\cdot)_{K,L}$
for various $ K\subset  \{1,\dots,m\}$ and  $ L\subset  \{1,\dots,n\}$
with $|K| = |L|$, that is, of $1$ and components of 
$\vd \circ F$.
The same can be said about coordinates of $g_\vt u_{F(\cdot)}\vw$; in fact,  the
basis 
 \equ{def basis} consists of eigenvectors for  $g_\vt$. It remains to apply a well-known
and elementary property, see e.g.\ \cite[Lemma 4.1]{KLW}, that  whenever  $f_1,\dots,f_N$
are $(C,\alpha)$-good on a set
$U$ with respect to a measure
$\nu$, the function $(f_1^2 + \dots +f_N^2)^{1/2}$ is $(N^{\alpha/2}C,\alpha)$-good
on
$U$ with respect to
$\nu$.
\end{proof}

Consequently, whenever $F$ and $\nu$ satisfy assumption (i) of Theorem  \ref{thm: strexgeneral}
(in particular, if $F$ is real analytic and $\nu$ is Lebesgue measure), 
for $\nu$-almost all $\x$ it is possible to choose a ball $\til B$ centered at $\x$ and 
$\til C,\alpha > 0$ such that $h(\cdot)$ as in \equ{def h} satisfies condition (i) of Theorem \ref{thm: friendly nondivergence}.
Our attention will be thus centered on lower bounds for $\|g_\vt u_{F(\cdot)}\vw\|_{\nu,B}$; 
indeed, a bound uniform in $\vw$ and $\vt$ would make it possible to apply Theorem  \ref{thm: friendly nondivergence} and establish \equ{measest}. Moreover, generalizing a result from \cite{gafa}
it is possible to write down a  condition equivalent to the statement
\eq{nolg}{g_{\mathcal{T}}
\Lambda_{F(\x)}\text{ 
 has linear growth for $\nu$-almost no }\vx}
within the class of Federer measures and good pairs. 

\begin{thm}\label{thm: criterion}   Let     an open subset  $U$  of $\R^d$, a  continuous map
  $F: U\to\mr
$
and 
 a  Federer measure   $\nu$ on
$U$ be such that the pair $(\vd\circ F,\nu)$ is good. 
 Also let $\ft$ be an unbounded subset  of $\fa$. Then \equ{nolg} holds if and only if
 for any  ball
$B\subset U$ with $\nu(B) > 0$ 
and any $\beta
> 0$  there exists $T  > 0$ such that 
\eq{condition}{\|g_\vt u_{F(\cdot)}\vw\|_{\nu,B} \geq e^{-\beta t}\quad\forall\,\vw\in\fw \text{ and any }
\,\vt\in\ft\text{ with }t\ge T\,.}
\end{thm}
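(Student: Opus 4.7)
I would handle the two implications separately. For the \emph{necessity of \equ{condition}}, I argue by contrapositive: if \equ{condition} fails at some ball $B$ with $\nu(B)>0$ and some $\beta>0$, then there exist sequences $\vt_n\in\ft$ with $t_n\to\infty$ and $\vw_n\in\fw$ such that $\|g_{\vt_n}u_{F(\cdot)}\vw_n\|_{\nu,B}<e^{-\beta t_n}$ for every $n$. By definition of $\|\cdot\|_{\nu,B}$, every $\x\in B\cap\supp\nu$ then yields a nonzero element $g_{\vt_n}u_{F(\x)}\vw_n$ in the exterior algebra of the lattice $g_{\vt_n}\Lambda_{F(\x)}$ of norm less than $e^{-\beta t_n}$, so $g_{\vt_n}\Lambda_{F(\x)}\notin\tilde K_{e^{-\beta t_n}}$. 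Lemma \ref{lem: higher lg} then says $g_\ft\Lambda_{F(\x)}$ has linear growth for every such $\x$, and these $\x$ form a set of positive $\nu$-measure, contradicting \equ{nolg}.

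For the \emph{sufficiency of \equ{condition}}, the plan is to combine the quantitative nondivergence estimate of Theorem \ref{thm: friendly nondivergence} with a Borel--Cantelli argument, using the dynamical correspondence of \S\ref{dioph}. Let $\x_0\in U$ be a $\nu$-generic point; choose $\til B\subset U$ centered at $\x_0$ on which $\nu$ is $D$-Federer and $(\vd\circ F,\nu)$ is $(C,\alpha)$-good, and set $B=3^{-(k-1)}\til B$. Fix $\gamma>0$ and pick $\beta\in(0,\gamma)$. Lemma \ref{lem: good} guarantees that each $\x\mapsto\|g_\vt u_{F(\x)}\vw\|$ is $(N^{\alpha/2}C,\alpha)$-good on $\til B$, while \equ{condition} supplies a $T=T(\beta,B)$ such that, for all $\vt\in\ft$ with $t\ge T$ and \emph{all} $\vw\in\fw$, $\|g_\vt u_{F(\cdot)}\vw\|_{\nu,B}\ge e^{-\beta t}$. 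Applying Theorem \ref{thm: friendly nondivergence} to $h(\x)=g_\vt u_{F(\x)}$ with $\rho=e^{-\beta t}$ and $\vre=e^{-\gamma t}$ then yields
\[
\nu\bigl(\{\x\in B:g_\vt\Lambda_{F(\x)}\notin K_{e^{-\gamma t}}\}\bigr)\le C' e^{-\alpha(\gamma-\beta)t}\nu(B)
\]
for every such $\vt$.

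By the remark at the end of \S\ref{dioph}, I may replace $\ft$ by a countable discrete subset $\fs$ of bounded Hausdorff distance from $\ft$; since $\fa$ has dimension $k-1$, the cardinality of $\{\vt\in\fs:t\le R\}$ grows only polynomially in $R$, so the series $\sum_{\vt\in\fs,\,t\ge T}e^{-\alpha(\gamma-\beta)t}$ converges. The Borel--Cantelli Lemma then implies that for $\nu$-a.e.\ $\x\in B$ the lattice $g_\vt\Lambda_{F(\x)}$ lies in $K_{e^{-\gamma t}}$ for all but finitely many $\vt\in\fs$, hence outside a bounded subset of $\ft$. Since $\gamma>0$ was arbitrary, $g_\ft\Lambda_{F(\x)}$ fails to have linear growth for $\nu$-a.e.\ $\x\in B$. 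A standard covering argument, using that the Federer and goodness hypotheses hold locally at $\nu$-a.e.\ point, then extends this conclusion to all of $U$, yielding \equ{nolg}.

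The main point of delicacy I anticipate lies in the uniformity in $\vw$: condition (ii) of Theorem \ref{thm: friendly nondivergence} must hold with a common lower bound $\rho$ over \emph{all} $\vw\in\fw$, and this is precisely what \equ{condition} supplies in one stroke through the common $T$. Without that uniformity one would be forced to apply the nondivergence estimate separately to each primitive subgroup and no summable bound would be available; once the uniform bound is secured, the remaining bookkeeping (discretising $\ft$, summing the geometric series, and patching balls together) is routine.
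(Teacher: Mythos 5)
Your proposal is correct and follows essentially the same route as the paper's own proof: the converse direction via Lemma \ref{lem: higher lg} applied to the witnesses $g_{\vt}u_{F(\x)}\vw$ of small norm on all of $B\cap\supp\nu$, and the forward direction via Lemma \ref{lem: good}, Theorem \ref{thm: friendly nondivergence} with $\rho=e^{-\beta t}$ and $\vre=e^{-\gamma t}$, discretisation of $\ft$, and Borel--Cantelli. Your added remark on why the sum over the discretised $\fs$ converges (polynomial count versus exponential decay) merely makes explicit what the paper leaves implicit.
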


\begin{proof} Let us start with the `if' part. Take an arbitrary positive $\gamma$. Since  $\nu$ is Federer, $(\vd\circ F,\nu)$ is good and in view of 
Lemma \ref{lem: good},  for $\nu$-almost every $\x_0\in U$ there exists a ball $\til B$ 
centered at $\x_0$ and constants $\til C,\alpha,D$ such that all the functions $\x\mapsto \|g_\vt u_{F(\x)}\vw\|$  are $(\til C,\alpha)$-good on $\til B
$
with respect to
$\nu$, and $\nu$ is $D$-Federer on $\til B$. Then take $B = 3^{-(k-1)}\til B$, choose an arbitrary $0 < \beta < \gamma$ and  $T$ 
such that \equ{condition} holds. This will enforce condition (ii) of  Theorem  \ref{thm: friendly nondivergence} with $\rho = e^{-\beta t}$ and $h$ as in \equ{def h} with $t\ge T$.
Applying Theorem  \ref{thm: friendly nondivergence} with $\vre = e^{-\gamma t}$
will yield
$$
{\nu\big(\big\{\x \in B: g_\vt \Lambda_{F(\x)}\ \notin K_{e^{-\gamma t}}
\big\}\big)}\le C'
(e^{-(\gamma - \beta) t})^{\alpha}{\nu(B)} \,.
$$
Now choose a countable subset  $\fs$ of $ \mathcal{T}$ with finite Hausdorff
 distance from $ \mathcal{T}$ and such that $\inf_{\vt_1,\vt_2\in \fs,
 \vt_1\ne\vt_2}\| \vt_1-\vt_2\| > 0$.
The above estimate implies that
$$
\sum_{\T\in\fs}\nu\left(
\big\{\x\in B : g_{\vt} \Lambda_{F(\x)}
\notin K_{e^{-\gamma t}}\big\} \right) < \infty\,.$$
Applying   the Borel-Cantelli Lemma, one concludes that for $\nu$-a.e.\  $\x\in B$ one has
$$g_{\vt} \Lambda_{F(\x)}
\in K_{e^{-\gamma t}}$$ for all
but finitely many $\T \in \fs$, which, in view of the remark before Proposition \ref{KM - rate of escape condition} and since $\gamma$ could be chosen arbitrary small, implies 
 \equ{nolg}.

As for the converse, suppose that there exists a ball $B\subset U$
 with $\nu(B) > 0$ and $\beta > 0$ such that
 for an unbounded set of $\vt\in\ft$ one has
$$\|g_\vt u_{F(\cdot)}\vw\|_{\nu,B} < e^{-\beta t}$$ for some $\vw\in\fw$ (dependent on $\vt$).
This means that for any $\x\in  B\cap \supp\,\nu$ and 
for each $\vt$ as above, $g_\ft \Lambda_{F(\x)}$ is not in $\tilde K_{e^{-\beta t}}$,
This, in view of Lemma \ref{lem: higher lg}, implies that  $g_\ft \Lambda_{F(\x)}$ has linear growth 
for all $\x$ in
$ B\cap \supp\,\nu$.
\end{proof}
   
In particular, in view of Proposition \ref{KM - rate of escape condition}, 
for $\ft = \fr$ or $\fa$ we get criteria for extremality and strong extremality of 
$F_*\nu$  within the class of good pairs. Note that here we see a dichotomy
between a certain property happening either for almost no points or for all points in  some 
nonempty open ball. This is typical
for this class of problems, see \cite{gafa, dima tams, dichotomy, yuqing}.

\ignore{
\medskip

Our next statement similarly gives a condition necessary and sufficient for
\eq{di measure zero}{ F_*\nu\big(\DI_\vre(\mathcal{T})\big) = 0\ 
\text{ for some } \vre > 0 
}
under the assumption that $\nu$ is  $D$-Federer  and $(\vd \circ F,\nu)$ is 
 $(C,\alpha)$-good. 
 %

\begin{thm}\name{thm: di criterion}   Let     an open subset  $U$  of $\R^d$, a  continuous map
  $F: U\to\mr
$
and 
 a  $D$-Federer measure   $\nu$ on
$U$ be such that the pair $(\vd\circ F,\nu)$ is \cag. 
 Also let $\ft$ be an unbounded subset  of $\fa$. Then \equ{di measure zero} holds if and only if
there exists $\rho > 0$ such that  for any  ball
$B\subset U$ with $\nu(B) > 0$ 
 one can find an  unbounded subset $\fs\subset \ft$ with 
  \eq{di condition}{\|g_\vt u_{F(\cdot)}\vw\|_{\nu,B} \geq \rho \quad\forall\,\vw\in\fw\,, \vt\in\fs\,.}
\end{thm}

\begin{proof} Start with the `if' part. By replacing $\rho$ with $\min(\rho,1)$ we can assume that
$\rho \le 1$.   Since  
$(\vd\circ F,\nu)$ is \cag\ and in view of 
Lemma \ref{lem: good},  for $\nu$-almost every $\x_0\in U$ there exists a ball $\til B = B(\x_0,r_0)$ 
such that 
condition (i) of  Theorem  \ref{thm: friendly nondivergence} holds for $h$ as in \equ{def h} 
and $\til C$ depending only on $C$ and $k$.   Then take $B = B(\x_0,r )$ with $r \le 3^{-(k-1)}r_0$, and choose an  unbounded $\fs\subset \ft$ such that 
  \equ{di condition} holds. This will enforce condition (ii) of  Theorem  \ref{thm: friendly nondivergence} for $\vt\in\fs$. Therefore for any $0 < \vre \le \rho$ and $\vt\in\fs$ we can conclude that
$$
{\nu\big(\big\{\x \in B: g_\vt \Lambda_{F(\x)}\ \notin K_{\vre}
\big\}\big)}\le C' (\vre/\rho)^\alpha{\nu(B)} \,,
$$
where $C'$ depends only on $d,k,C,\alpha$ and $D$. 
Taking $\vre$ small enough so that $c\df C' (\vre/\rho)^\alpha < 1$ ensures that
for any $B$ centered at $\x_0$ with radius at most $3^{-(k-1)}r_0$ and any $T > 0$,
$$
{\nu\left(\bigcap_{\vt\in\ft ,\,t > T}\big\{\x \in B: g_\vt \Lambda_{F(\x)}\ \notin K_{\vre}
\big\}\right)}\le c {\nu(B)} \,.
$$
Therefore, in view of   \equ{liminf},  one  concludes that
$
{\nu\big(\big\{\x \in B:  F(\x)\in\DI_\vre(\mathcal{T})
\big\}\big)}
$ is not greater than $ c {\nu(B)}  
$. 
Applying a density theorem for Radon measures on Euclidean spaces
\cite[Corollary 2.14]{Mattila}, one sees that this forces $\DI_\vre(\mathcal{T})$ to be null
with respect to $F_*\nu$.
\medskip

The assumption for the converse is  that for any positive $\rho$ there exists a ball $B\subset U$
 with $\nu(B) > 0$ and $T > 0$ such that for any $\vt\in\ft$ with $t \ge T$ one has
 $ \|g_\vt u_{F(\x)}\vw\|  \leq \rho $ for some $\vw\in\fw$ and all $\x\in \supp\,\nu\cap B$.
Consequently (in view of Minkowski's Lemma), for any positive $\vre$ there exists  $B\subset U$
 with positive measure and $T > 0$ such that for any $\vt\in\ft$ with $t \ge T$ one has
 $ \|g_\vt u_{F(\x)}\vv\|  < \vre $ for all $\x\in \supp\,\nu\cap B$ and  some $\vv\in\fw_1$ (perhaps 
 dependent on $\x$). This precisely means that
 $$\supp\,\nu\cap B\subset \bigcap_{\vt\in\mathcal{T} ,\,t > T}\big\{\x \in U: g_\vt \Lambda_{F(\x)}\ \notin K_{\vre}
\big\} \subset F^{-1}\big( \DI_\vre(\mathcal{T})\big)\,,$$
finishing the proof. \end{proof}
      
We remark that it is clear from the above proof that a lower bound on $\vre$ for which \equ{di measure zero} holds depends only on the constants $C,\alpha,D,k,d$ and not on $\ft$. This will be important
in the derivation of Theorem \ref{thm: digeneral} in the next section.
}

\section{Proof of Theorem \ref{thm: strexgeneral} 
}
\name{exp}

In general, checking a conditions like \equ{condition} 
seems to be a complicated task; 
the full strength of the vector case 
  ($n=1$) of Theorem \ref{thm: criterion} has been utilized in  \cite{gafa}, see also \cite{dima tams, yuqing}.
However, we will show that the nonplanarity assumption of Theorems \ref{thm: strexgeneral} implies a stronger
property, namely $e^{-\beta t}$ in the right hand side of  \equ{condition} can be replaced by a 
positive constant dependent only on $B$. To establish such lower bounds,
we are going to  look closely at projections of `curves'
$\{u_{F(\x)}\vw\}$ in $\bigwedge(\R^k)$ onto subspaces expanded by the 
$g_\vt$-action. Namely, for a fixed $\vt$ let us denote by $E^+_\vt$ the span of all 
the eigenvectors of $g_\vt$ in  $\bigwedge(\R^k)$ with eigenvalues greater or equal to one (in other words, those
which are not contracted by the $g_\vt$-action). It is easy to see that $E^+_\vt$ is spanned by 
elements $\ve_I \wedge \vv_J$ where 
$I \subset
\{1,\dots,m\}$ and $J \subset
\{1,\dots,n\}$ are such that $$\sum_{i\in I}t_{i} \ge \sum_{j \in J}t_{m+j}\,.$$
Also let 
 $\pi^+_\vt$ be the orthogonal projection onto $E^+_\vt$. 
%
 As a straightforward application of Theorem \ref{thm: criterion}, we have

\begin{cor}\name{cor: uniform lower}   Let   
  $F: U\to\mr
$,
$\nu$ 
 and $\ft$ be as in  Theorem \ref{thm: criterion}. 
 Suppose that  for any  ball
$B\subset U$ with $\nu(B) > 0$ 
one has
 \eq{cor condition}{\inf_{ \vw\in\fw,\, \vt\in\ft} \|\pi^+_\vt u_{F(\cdot)}\vw\|_{\nu,B} > 0
 \,.}
Then \equ{nolg} holds.
\end{cor}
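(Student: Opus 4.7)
The plan is to deduce the corollary directly from Theorem \ref{thm: criterion}. The hypothesis \equ{cor condition} gives, for each ball $B$ with $\nu(B) > 0$, a uniform positive lower bound on $\|\pi^+_\vt u_{F(\cdot)}\vw\|_{\nu,B}$; what I need is the weaker requirement \equ{condition}, namely a lower bound by $e^{-\beta t}$ with $\beta$ arbitrarily small and $t$ sufficiently large. So I want to transfer the lower bound from the projection $\pi^+_\vt u_{F(\x)}\vw$ to the full vector $g_\vt u_{F(\x)}\vw$.

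First I would record the basic linear-algebra fact underlying everything: the basis \equ{def basis} of $\bigwedge^{\ell}(\R^k)$ consists of simultaneous eigenvectors of $g_\vt$ that are orthonormal with respect to the extended Euclidean inner product on $\bigwedge(\R^k)$. Consequently $g_\vt$ is self-adjoint in this inner product, and $\pi^+_\vt$ (the orthogonal projection onto the eigenspace for eigenvalues $\ge 1$) commutes with $g_\vt$. In particular, for any $\vu \in \bigwedge(\R^k)$,
\[
\|g_\vt \vu\|^2 \;=\; \|\pi^+_\vt g_\vt \vu\|^2 + \|(\Id-\pi^+_\vt)g_\vt \vu\|^2 \;\ge\; \|g_\vt \pi^+_\vt \vu\|^2 \;\ge\; \|\pi^+_\vt \vu\|^2,
\]
where the last inequality uses that $g_\vt$ acts on $E^+_\vt$ with all eigenvalues $\ge 1$.

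Applying this pointwise with $\vu = u_{F(\x)}\vw$ and taking the supremum over $\x \in B \cap \supp\,\nu$ yields
\[
\|g_\vt u_{F(\cdot)}\vw\|_{\nu,B} \;\ge\; \|\pi^+_\vt u_{F(\cdot)}\vw\|_{\nu,B}\,.
\]
By assumption \equ{cor condition}, the right-hand side is bounded below by some $\rho = \rho(B) > 0$ that is independent of $\vw\in\fw$ and $\vt\in\ft$. Given any $\beta > 0$, choose $T$ large enough that $e^{-\beta T} \le \rho$; then for every $\vt\in\ft$ with $t\ge T$ and every $\vw\in\fw$,
\[
\|g_\vt u_{F(\cdot)}\vw\|_{\nu,B} \;\ge\; \rho \;\ge\; e^{-\beta t}.
\]

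This is exactly the condition \equ{condition} of Theorem \ref{thm: criterion}, from which \equ{nolg} follows. There is no substantial obstacle here: once the commutation of $g_\vt$ with $\pi^+_\vt$ in the orthonormal eigenbasis \equ{def basis} is noted, everything is routine. The only point that deserves care is ensuring the ambient norm really is the Euclidean one (as stipulated in \S\ref{dioph}), since the inequality $\|g_\vt \vu\| \ge \|\pi^+_\vt \vu\|$ uses orthogonality in an essential way.
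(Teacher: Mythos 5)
Your argument is correct and is essentially the paper's own proof: the same chain of inequalities $\|g_\vt \vu\| \ge \|\pi^+_\vt g_\vt \vu\| = \|g_\vt \pi^+_\vt \vu\| \ge \|\pi^+_\vt \vu\|$ transfers the uniform lower bound $\rho$ from the projection to $\|g_\vt u_{F(\cdot)}\vw\|_{\nu,B}$, after which one picks $T$ with $e^{-\beta T}\le \rho$ and invokes Theorem \ref{thm: criterion}. Your extra remarks on the orthonormal eigenbasis and the commutation of $g_\vt$ with $\pi^+_\vt$ merely make explicit what the paper leaves implicit.
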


\begin{proof} By the definition of the map $\pi^+_\vt$, for any $\vw\in \bigwedge(\R^k)$ one has
$$
  \|g_\vt \vw\| \ge \|\pi^+_\vt g_\vt \vw\| =  \|g_\vt \pi^+_\vt \vw\|\ge  \| \pi^+_\vt \vw\|\,,
  $$
hence a uniform lower bound, say $c$, on   $\| \pi^+_\vt  u_{F(\x)}\vw\|$  implies a similar bound on
  $\|g_\vt u_{F(\x)}\vw\|$. Thus   \equ{condition} will hold as long as $e^{- \beta T} \le c$.
   \end{proof}

Our strategy for checking extremality or strong extremality will be to derive estimates
of type  \equ{cor condition} from the nonplanarity assumptions, in particular from those of Theorem 
 \ref{thm: strexgeneral}. However before proceeding let us exhibit a partial converse to the above corollary:

\begin{cor}\name{cor: uniform zero}  
 Let  \amr\ and let $\ft$  be an unbounded subset  of $\fa$.      Suppose that  there exist  $\vt_0\in\ft$  
 and $\vw\in\fw$ such that:
 \begin{itemize}
 \item[(a)] $\fs\df\{c\vt_0  :  c > 0\} \cap \ft$ is unbounded; and
 \item[(b)]  $ \pi^+_{\vt_0} u_{Y}\vw = 0$.
  \end{itemize}
 Then  $g_\ft \Lambda_{Y}$ has linear growth.
\end{cor}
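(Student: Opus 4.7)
The plan is to exhibit the primitive element $\vw$ from the hypothesis as the explicit witness to linear growth along the ray $\mathcal{S}$, via Lemma \ref{lem: higher lg}.

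First I would expand $u_Y\vw$ in the eigenbasis of $g_{\vt_0}$. The vectors $\ve_I\wedge\vv_J$ from \equ{def basis} diagonalize every $g_\vt$, with $g_\vt(\ve_I\wedge\vv_J) = e^{\lambda_{I,J}(\vt)}\ve_I\wedge\vv_J$ where
$$\lambda_{I,J}(\vt) = \sum_{i\in I}t_i - \sum_{j\in J}t_{m+j}.$$
Writing $u_Y\vw = \sum_{I,J} c_{I,J}\,\ve_I\wedge\vv_J$, assumption (b) together with the definition of $E^+_{\vt_0}$ forces $c_{I,J}=0$ whenever $\lambda_{I,J}(\vt_0)\ge 0$. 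Thus every pair $(I,J)$ with $c_{I,J}\ne 0$ satisfies $\lambda_{I,J}(\vt_0)<0$, and I set $\mu \df \min\{-\lambda_{I,J}(\vt_0) : c_{I,J}\ne 0\}>0$.

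Next I would exploit the homogeneity of $\lambda_{I,J}$ in $\vt$. For any $c>0$ with $\vt = c\vt_0 \in \mathcal{S}$, the corresponding scalar from \equ{def t} satisfies $t = c t_0$, and
$$
\|g_{c\vt_0} u_Y\vw\|
= \Bigl\|\sum_{I,J} c_{I,J}\,e^{c\lambda_{I,J}(\vt_0)}\ve_I\wedge\vv_J\Bigr\|
\le C_0\, e^{-\mu c} = C_0\, e^{-(\mu/t_0)\,t},
$$
where $C_0 = \sum_{I,J}|c_{I,J}|$. Choose any $\gamma$ with $0<\gamma<\mu/t_0$. Then for all sufficiently large $c$, and in particular for an unbounded collection of $\vt\in\mathcal{S}\subset\mathcal{T}$ (which exists by (a)), one has $\|g_\vt u_Y\vw\|<e^{-\gamma t}$. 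Since $\vw\in\mathcal{W}$, this exhibits $g_\vt\Lambda_Y \notin \tilde K_{e^{-\gamma t}}$, so by Lemma \ref{lem: higher lg} the trajectory $g_\mathcal{T}\Lambda_Y$ has linear growth.

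I expect no serious obstacle: the argument is essentially a one-line computation once the eigenvalue decomposition is set up. The only thing to be careful about is the bookkeeping between the parameter $c$ rescaling $\vt_0$ and the scalar $t$ from \equ{def t}, and verifying that the rate $\mu/t_0$ indeed translates into a bound of the form $e^{-\gamma t}$ valid along the whole unbounded ray $\mathcal{S}$, not merely at isolated $\vt$.
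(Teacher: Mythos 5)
Your proposal is correct and follows essentially the same route as the paper's proof: both observe that (b) places $u_Y\vw$ in the orthogonal complement of $E^+_{\vt_0}=E^+_{c\vt_0}$, which is uniformly exponentially contracted along the ray, and then invoke Lemma \ref{lem: higher lg}. Your explicit eigenvalue bookkeeping (the constant $\mu$ and the identity $t=ct_0$) just spells out the paper's one-line assertion that $\|g_\vt u_Y\vw\|\le e^{-\beta t}\|u_Y\vw\|$ for some $\beta>0$.
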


\begin{proof} From (a) and (b) it follows that 
$u_{Y}\vw$ belongs to the
orthogonal complement of $E_\vt^+$ whenever $\vt\in\fs$ (clearly the spaces $E_\vt^+$ do not change if 
$\vt$ is replaced by a proportional vector). Hence it is exponentially contracted by the $g_\vt$-action, that is,
  for some 
$\beta > 0$ and all $\vt\in\fs$  one can write
$$
\|g_\vt u_{Y}\vw\| \le e^{-\beta t}\|u_{Y}\vw\| \le C e^{-\beta t}\,,
$$
where $C$ is a constant depending on $\vw$ and $Y$.  Consequently  \equ{higher lg} is satisfied with $\Lambda = \Lambda_{Y}$, 
and Lemma \ref{lem: higher lg} readily implies the linear growth of  $g_\ft \Lambda_{Y}$. \end{proof}

In particular, whenever conditions (a) and (b) above are satisfied for some $\vt_0\in\ft$ and $Y$ of the form $F(\x)$
for all $\x\in\supp\,\nu\cap B$, where $B\subset U$ is a ball of positive measure (that is, the infimum in the left hand side of \equ{cor condition} is equal to zero and {\it is attained\/}), it follows  that 
  \equ{nolg} does not hold, and, moreover, $g_\ft \Lambda_{F(\x)}$ has linear growth for all $\x\in B\cap \supp\,\nu$. We will explore this when it comes to discussing specific examples at the end of the paper.
  
\ignore{
 
 point out that whenever \equ{cor condition}
 is `violated in a strong sense', that is, the infimum in the right hand side
Note also that vanishing of $\pi^+_\vt u_{Y}\vw$ for some $\vw\in\fw$ and $\vt\in\ft$ 
easily  implies linear growth of $g_\ft \Lambda_Y$ under an additional assumption that $c\vt\in\ft$ for all $c > 0$ 
(clearly satisfied for $\ft = \fa$ or $\fr$). 
Indeed, suppose that $u_{Y}\vw$ belongs to the
orthogonal complement of $E_\vt^+$. Then for some 
$\beta > 0$ and $\vt$ as above one can write
$$
\|g_\vt u_{Y}\vw\| \le e^{-\beta t}\|u_{Y}\vw\| \le C e^{-\beta t}\,,
$$
where $C$ is a constant depending on $\vw$ and $Y$. The same is true with $\vt$ replaced by $c\vt$
for any $c > 0$, since $E_\vt^+ = E_{c\vt}^+$. In view of Lemma \ref{lem: higher lg} this forces
$g_\ft \Lambda_Y$ to have linear growth.

In particular, we have a partial converse to the above corollary:

\begin{cor}\name{cor: uniform zero}  
 Let  $\nu$ be a measure on $\R^d$, $B\subset \R^d$
a ball with $\nu(B) > 0$,   $F$ a continuous map
  $\overline{B}\to\mr
$, and $\ft$   an unbounded subset  of $\fa$.      Suppose that  for some  $\vw\in\fw$   one has 
 \eq{cor condition zero}{  \|\pi^+_\vt u_{F(\cdot)}\vw\|_{\nu,B} = 0
 \,\text{for an unbounded set of }\vt\in\ft
 \,.}
 Then \equ{nolg} does not hold, and, moreover, $g_\ft \Lambda_{F(\x)}$ has linear growth for all $\x\in B\cap \supp\,\nu$.
\end{cor}

\begin{proof} From \equ{cor condition zero} it follows that there exists an unbounded subset 
$\fs$ of $\ft$ such that $u_{F(\x)}\vw$ belongs to the
orthogonal complement to $E_\vt^+$ whenever $\vt\in\fs$ and $\vx \in\supp\, \nu \cap B$.
 Therefore for some 
$\beta > 0$ and $\vt,\vx$ as above one can write
$$
\|g_\vt u_{F(\x)}\vw\| \le e^{-\beta t}\|u_{F(\x)}\vw\| \le C e^{-\beta t}\,,
$$
where $C$ is a constant depending on $\vw$ and $B$. Hence for this $B$ and some $\beta > 0$
it is impossible
to find $T$  to satisfy  \equ{condition}. \end{proof}}

\medskip

Now let us get back to Corollary \ref{cor: uniform lower}  and its applications.
The next observation immediately follows from 
 the compactness of spheres in finite-dimensional spaces:
 
\begin{lem}\label{lem: nonpl}   Let   $\nu$  be a measure on $\R^d$
and
 $\vf = (f_1,\dots,f_N)$ a map $U\to \R^N$, where $U\subset \R^d$ is open with $\nu(U) > 0$.  
Then $(\vf,\nu)$ is nonplanar if and only if for any ball $B\subset U$ with $\nu(B) > 0$
there exists $c > 0$ 
such that
$$\|a_0 + \sum_{i = 1}^Na_if_i \|_{\nu,B} \ge c\text{ for any }a_0,a_1,\dots,a_N 
\text{ with }\max |a_i| \ge 1,.
$$  \end{lem}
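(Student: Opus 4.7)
The plan is to view the quantity
$P(\mathbf a) \df \|a_0 + \sum_{i=1}^{N} a_i f_i\|_{\nu,B}$
as an (extended-real-valued) seminorm in the variable $\mathbf a = (a_0,\dots,a_N) \in \R^{N+1}$, and then to apply a standard compactness argument on the unit sphere.

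The ``if'' direction is routine. Given the hypothesized uniform bound on every ball of positive $\nu$-measure, take any nontrivial $(a_0,\dots,a_N)$ and rescale so that $\max_i|a_i| = 1$; the bound yields $P(\mathbf a) \ge c > 0$ on any such ball $B$, so $a_0 + \sum a_i f_i$ does not vanish identically on $B \cap \supp\,\nu$. Hence $1,f_1,\dots,f_N$ are linearly independent on $B \cap \supp\,\nu$, i.e.\ $(\vf,\nu)$ is nonplanar.

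For the ``only if'' direction, fix a ball $B$ with $\nu(B) > 0$. The functional $P : \R^{N+1} \to [0,\infty]$ is positively homogeneous and subadditive. For each fixed $x \in B \cap \supp\,\nu$ the map $\mathbf a \mapsto |a_0 + \sum_i a_i f_i(x)|$ is continuous on $\R^{N+1}$, and $P$ is the pointwise supremum of these continuous functions, hence is lower semicontinuous in $\mathbf a$. Nonplanarity says precisely that $P(\mathbf a) > 0$ whenever $\mathbf a \ne 0$.

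I would then restrict $P$ to the compact unit sphere $S = \{\mathbf a \in \R^{N+1} : \|\mathbf a\|_\infty = 1\}$. Any $[0,\infty]$-valued lower semicontinuous function on a compact set attains its infimum, so $c \df \inf_S P$ is attained at some $\mathbf a^* \in S$; by nonplanarity, $c = P(\mathbf a^*) > 0$. For arbitrary $\mathbf a$ with $M \df \|\mathbf a\|_\infty \ge 1$, the vector $\mathbf a/M$ lies in $S$, and positive homogeneity gives $P(\mathbf a) = M\,P(\mathbf a/M) \ge Mc \ge c$, which is the desired bound. The only mild subtlety is that $P$ can take the value $+\infty$ when some $f_i$ is unbounded on $B \cap \supp\,\nu$; lower semicontinuity (rather than continuity) is used precisely so that the infimum is still attained on the compact sphere, so no additional hypotheses on the $f_i$ are required.
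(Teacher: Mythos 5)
Your proof is correct and is exactly the argument the paper has in mind: the paper gives no details beyond remarking that the lemma ``immediately follows from the compactness of spheres in finite-dimensional spaces,'' and your restriction of the homogeneous functional $\mathbf a\mapsto\|a_0+\sum_i a_if_i\|_{\nu,B}$ to the unit sphere is precisely that compactness argument. The extra observation that this functional is only lower semicontinuous (and possibly $+\infty$-valued) when the $f_i$ are unbounded on $B\cap\supp\nu$, and that this still suffices for the infimum to be attained, is a worthwhile refinement of the paper's one-line justification.
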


Since it is assumed in Theorem 
 \ref{thm: strexgeneral} that $(\vd\circ F,\nu)$ is nonplanar, in view of the above lemma and corollary to check  \equ{cor condition}  it would suffice to bound $ \|\pi^+_\vt u_{F(\cdot)}\vw\|$ from below by the absolute value of a  linear combination  of $1$ coordinates
 of $\vd\circ F$ with big enough coefficients.
 
 \medskip
 
 Note that the spaces $E^+_\vt$ may be different for different  $\vt$ (although, as was mentioned above,
 $E^+_\vt = E^+_{\vt'}$ if $\vt$ and $\vt'$ are proportional). However, it turns out that in the set-up
 of Theorem  \ref{thm: strexgeneral} one can work with the intersection of all those spaces:
 $$
 E^+ \df \cap_{\vt\in\fa}E^+_\vt 
 $$
 consisting of elements which are not contracted by $g_\vt$ for all $\vt\in\fa$. It is easy to see that $
 E^+ $ is spanned by   \eq{basisplus}{\big\{\ve_I, \ve_{\{1,\dots,m\}}\wedge \vv_J: I \subset
\{1,\dots,m\},J \subset
\{1,\dots,n\}\big\}\,.}
The next proposition
explains that for any $\vw\in\fw
$ it is possible to find an element of $E^+$ on which
the  `curves'
$\{u_{F(\x)}\vw\}$ project nontrivially.


\begin{prop}\name{prop: nonzero}  For any $\vw\in\fw$ 
 it is possible to choose
an element $\vw_0$ of the basis  \equ{basisplus} of  $E^+$ such that 
the function $Y\mapsto
 \langle  u_{Y}\vw, \vw_0\rangle$ is a nontrivial integer linear combination 
of $1$ and components of $\vd(Y)$.
\end{prop}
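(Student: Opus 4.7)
The plan is to compute $\langle u_Y\vw,\vw_0\rangle$ explicitly via \equ{action} and then deduce nontriviality from the primitivity of $\vw$. Writing $\vw=\sum_{|I|+|J|=\ell} w_{I,J}\,\ve_I\wedge\vv_J$ with $w_{I,J}\in\bz$, a term-by-term application of \equ{action} yields
\[
\langle u_Y \vw,\,\ve_{I_0}\wedge\vv_{J_0}\rangle \;=\; \sum_{\substack{I\subset I_0,\,J\supset J_0 \\ |I_0\ssm I|=|J\ssm J_0|}} \pm\, w_{I,J}\, y_{I_0\ssm I,\,J\ssm J_0},
\]
with the convention $y_{\varnothing,\varnothing}=1$. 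This expression is manifestly an integer linear combination of $1$ and components of $\vd(Y)$, with constant term $\pm w_{I_0,J_0}$. Since the polynomials $1$ together with the $\{y_{K,L}\}$ are linearly independent in $\br[y_{ij}]$ (each determinant has a distinct leading monomial under any fixed monomial order), the pairing is identically zero in $Y$ if and only if every coefficient on the right side vanishes.

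Nontriviality is then established by contradiction. Only $\vw_0$ of degree $\ell$ can pair nontrivially with $u_Y\vw$, and in \equ{basisplus} these are $\vw_0=\ve_{I_0}$ with $|I_0|=\ell$ (requires $\ell\le m$) or $\vw_0=\ve_{\{1,\dots,m\}}\wedge\vv_{J_0}$ with $|J_0|=\ell-m$ (requires $\ell\ge m$). Assuming the pairing vanishes identically for every such $\vw_0$, I would argue as follows. In the case $\ell\le m$, taking $J_0=\varnothing$, vanishing of the constant term gives $w_{I_0,\varnothing}=0$ for every $I_0$, and vanishing of the coefficient of $y_{K,L}$ gives $w_{I_0\ssm K,L}=0$ for every $K\subset I_0$ with $|K|=|L|\ge 1$. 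Any pair $(I,L)$ with $|I|+|L|=\ell$ is realized this way: pick any $K\subset\{1,\dots,m\}\ssm I$ of size $|L|$, which exists since $|I|+|L|\le m$, and set $I_0=I\cup K$. Hence every $w_{I,J}=0$, giving $\vw=0$. In the case $\ell\ge m$, the dual bookkeeping (with $K=\{1,\dots,m\}\ssm I$ and $L=J\ssm J_0$ for any $J_0\subset J$ of size $\ell-m$) forces the same conclusion. Either case contradicts $\vw\in\fw$.

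The only real obstacle is the combinatorial bookkeeping: verifying that every index pair $(I,J)$ with $|I|+|J|=\ell$ is captured by the coefficient of some monomial $y_{K,L}$ relative to at least one admissible $\vw_0$ from \equ{basisplus}. The case split $\ell\le m$ versus $\ell\ge m$ matches precisely the two types of basis elements there, and the required set-theoretic choices of $K$, $L$, and $J_0$ are elementary in both regimes; the overlap at $\ell=m$ causes no trouble since either form of $\vw_0$ already suffices.
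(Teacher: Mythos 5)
Your argument is correct and is essentially the paper's own proof: you expand $\langle u_Y\vw,\ve_{I_0}\wedge\vv_{J_0}\rangle$ via \equ{action}, observe that the coefficient of each minor $y_{K,L}$ is a single $\pm w_{I,J}$, split into the cases $\ell\le m$ and $\ell\ge m$ matching the two types of basis elements in \equ{basisplus}, and make the same combinatorial choice ($K\subset\{1,\dots,m\}\ssm I$ with $|K|=|J|$, $I_0=I\cup K$, resp.\ the dual choice of $J_0\subset J$). The only cosmetic difference is that you run the argument contrapositively (all pairings trivial $\Rightarrow$ $\vw=0$) where the paper starts from a fixed nonzero coefficient $a_{I,J}$ and exhibits $\vw_0$ directly.
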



\begin{proof} Denote by $\pi^+$ the orthogonal projection onto $E^+$.  We are going to
use  \equ{action} to explicitly write down the coordinates  
$\pi^+u_Y\vw$ with respect to the basis  \equ{basisplus} for any
$\vw\in\fw_\ell$, that is,
\eq{w}{\vw = \sum_{I,J,\,|I| + |J| = \ell}a_{I,J}\ve_{I}\wedge\vv_{J}
\,.}
Consider two cases. \medskip 

{\bf Case 1.} If 
$ \ell \le m$, 
using \equ{action} one can see that
$${\pi^+u_Y(\ve_{I}\wedge\vv_{J})= \ve_{I}\wedge \sum_{K\subset  \{1,\dots,m\}\ssm I,\, |K| = |J|}\pm y_{K,J} \ve_K\,.}
$$
Note that $|I|$ can take values between $\max(0,\ell-n)$ and $\ell$; equivalently, $|J| = \ell - |I|$ ranges between $0$ and $\ell - \max(0,\ell-n) = \min(\ell,n)$.
Thus
$$
\pi^+u_Y\vw  = \sum_{\substack{I\subset \{1,\dots,m\}\\\ \max(0,\ell-n)\le |I|\le \ell}}
\ve_{I}\wedge \sum_{\substack{J\subset  \{1,\dots,n\}\\\ |J| = \ell - |I|}} a_{I,J}\sum_{\substack{K\subset  \{1,\dots,m\}\ssm I \\ |K| = |J|}}\pm y_{K,J} \ve_K\,.
$$
Rearranging terms and substituting $L = I\cup K$, we get
$$
\pi^+u_Y\vw = \sum_{\substack{L\subset \{1,\dots,m\}\\\  |L| = \ell}}
\left( \sum_{\substack{K\subset  L\\ 0 \le |K| \le \min(\ell,n)\le |I|\le \ell}}\  \sum_{\substack{J\subset  \{1,\dots,n\}\\\ |J| = |K|}}\pm a_{L\ssm K,J} y_{K,J}\right) \ve_L\,.
$$
\ignore{
Now recall that we are also given  $\vt\in\fa$. Without loss of generality assume that the components of $\vt$ are ordered so that
$t_1 = \max_{i = 1,\dots,m} t_i$.
This implies 
that $t_1\geq t/m$, and, therefore
\eq{expansion1}{\Vert g_{\T}\ve_{L}\Vert\geq e^{t_1}\Vert\ve_{L}\Vert\geq
e^{t_{1}}\geq e^{t/m} } 
whenever $L\subset \{1,\dots,m\}$ contains
 $1$.
We also know that $\vw\notin E^+$, thus
  $a_{I,J} \ne 0$ for some $J\ne\varnothing$. 
  }
  Recall that the coefficients in the expansion \equ{w} are integer and at least one
of them, say $a_{I,J}$, is nonzero. Take any $K\subset \{1,\dots,m\}\ssm I$
  with $|K|= |J|$ and denote $L\df I\cup K$. Then 
  $$
  \langle  u_{Y}\vw, \ve_L \rangle = \sum_{\substack{K\subset  L\\ 0 \le |K| \le \min(\ell,n)\le |I|\le \ell}}\  \sum_{\substack{J\subset  \{1,\dots,n\}\\\ |J| = |K|}}\pm a_{L\ssm K,J} y_{K,J}
  $$
  will be a nontrivial (since  $a_{I,J}$ is one of the coefficients) integer  linear combination 
of $1$ and components of  $\vd(Y)$.


\medskip 

{\bf Case 2.} If $\ell \ge m$, we get
\begin{equation*}
\begin{aligned}
\pi^+u_Y(\ve_{I}\wedge\vv_{J}) &= \ve_{I}\wedge \left(\sum_{K\subset  J,\, |K| = m - |I|}\pm y_{\{1,\dots,m\}\ssm I,K} \ve_{\{1,\dots,m\}\ssm I} \wedge \vv_{J\ssm K}\right)\\ &= \ve_{\{1,\dots,m\}}\wedge\left(\sum_{K\subset  J,\, |K| = m - |I|}\pm y_{\{1,\dots,m\}\ssm I,K}\wedge \vv_{J\ssm K}\right)\,.
\end{aligned}
\end{equation*}
Note that this time we must have $\max(0,\ell-n)\le |I|\le m$, or, equivalently, $
\{1,\dots,m\} \ssm I| \le m - \max(0,\ell-n) = \min(m,k-\ell)$.
Therefore:
$$
\pi^+u_Y\vw = \ve_{\{1,\dots,m\}}\wedge\sum_{\substack{I\subset \{1,\dots,m\}\\  |I| \ge \max(0,\ell-n)}}\ 
 \sum_{\substack{J\subset \{1,\dots,n\}\\ |J| = \ell - |I|}} a_{I,J}\sum_{\substack{K\subset J \\ |K| = m - |I|}}
\pm y_{\{1,\dots,m\}\ssm I,K}\vv_{J\ssm K}\,.
$$
Rearranging terms, substituting $L = J\ssm K$ and replacing $I$ with $\{1,\dots,m\}\ssm I$, we get
$$
\pi^+u_Y\vw \ 
= \sum_{\substack{L\subset \{1,\dots,n\}\\  |L| = \ell-m}} 
\left( \sum_{\substack{I\subset  \{1,\dots,m\}\\  |I| \le   \min(m,k-\ell)}}\  \sum_{\substack{K\subset  \{1,\dots,n\}\ssm L\\ |K| =  |I|}}
\pm a_{\{1,\dots,m\}\ssm I,K\cup L} y_{ I,K}\right)\ve_{\{1,\dots,m\}}\wedge\vv_L\,.
$$
 \ignore{
 This time we can without loss of generality assume that  $t_{m+1} = \max_{j = 1,\dots,n} t_{m+j}$, hence
 $t_{m+1}\geq t/n$ and \eq{expansion2}{\Vert g_{\T}(\ve_{ \{1,\dots,m\}}\wedge\vv_{L})\Vert\geq  e^{t_{m+1}}\Vert \ve_{ \{1,\dots,m\}}\wedge\vv_{L}\Vert\geq e^{t/n}}
 whenever $L\subset \{1,\dots,n\}$ with 
 $1\notin L$. }
 Now let  $ a_{\{1,\dots,m\}\ssm I,J} $ be a nonzero coefficient.
Then one can take any $K\subset   J$ with $|K|= |I|$ 
and conclude 
 that
  $$
  \langle u_{Y}\vw, \ve_{\{1,\dots,m\}}\wedge\vv_L \rangle =  \sum_{\substack{I\subset  \{1,\dots,m\}\\  |I| \le   \min(m,k-\ell)}}\  \sum_{\substack{K\subset  \{1,\dots,n\}\ssm L\\ |K| =  |I|}}
\pm a_{\{1,\dots,m\}\ssm I,K\cup L} y_{ I,K}
  $$
  is a nontrivial (since  $a_{\{1,\dots,m\}\ssm I,J}$ is one of the coefficients) integer  linear combination 
of $1$ and the components of  $\vd(Y)$.
This finishes the proof
of the proposition.
\end{proof}

We remark that in the case $m = 1$ or $n = 1$ all the spaces $E^+_\vt$, $\vt\in\fa$, coincide with
each other and with $E^+$; 
in that case  in \cite{KM} and \cite{KLW}   a simplified form of the above computation 
was used to prove \cite[Theorem 5.4]{KM} and \cite[Theorem 3.3]{KLW}  respectively.


Finally we can complete the 

\begin{proof}[Proof of Theorem 
 \ref{thm: strexgeneral}]  Recall that it suffices to check that the
 assumption  of Corollary \ref{cor: uniform lower} are satisfied. Take $B\subset U$ with $\nu(B) > 0$,
 and write
 $$\|\pi_\vt^+ u_{F(\cdot)}\vw\| \ge | \langle\pi_\vt^+ u_{F(\cdot)} \vw,\vw_0\rangle|  =  | \langle u_{F(\cdot)} \vw,\vw_0\rangle| \cdot\|\pi_\vt^+ \vw_0\|\ge  | \langle u_{F(\cdot)} \vw,\vw_0\rangle| \,,
 $$
where $\vw_0$  is as in Proposition \ref{prop: nonzero}, so that  $ \langle u_{F(\cdot)} \vw,\vw_0\rangle$
is a nontrivial integer linear combination of $1$ and the components of $\vd\circ F$.
Therefore Lemma \ref{lem: nonpl} and the nonplanarity of $(\vd\circ F,\nu)$ imply 
\equ{cor condition}.
 In view of Corollary \ref{cor: uniform lower}, 
 Theorem \ref{thm: criterion}  and Proposition \ref{KM - rate of escape condition}(b), $F_*\nu$ is strongly extremal.
 \end{proof}

\ignore{
\begin{proof}[Proof of Theorem 
 \ref{thm: digeneral}]  In view of Theorem  \ref{thm: di criterion} and a remark following its proof,
 it suffices to show  that  
   for any  ball
$B\subset U$ with $\nu(B) > 0$ 
 there exists $T > 0$ such that 
  \eq{di suffcond}{\|g_\vt u_{F(\cdot)}\vw\|_{\nu,B} \geq 1 \quad\forall\,\vw\in\fw\text{ and } \forall\, \vt\in\fa\text{ with }t \ge T \,.}
Indeed, then to satisfy  \equ{di condition} with $\ft$ being any unbounded subset of $\fa$, one can take
$\rho = 1$ and $\fs =   \{\vt\in\ft : t \ge T\}$. 

Now take $B$ as above. As was the case in the proof of Theorem 
 \ref{thm: strexgeneral},  \equ{di suffcond} holds   whenever $\vw\in E^+$. 
Otherwise take $\vw_0$  is as in Proposition \ref{prop: nonzero} and write
\begin{equation*}
\begin{split}
\|g_\vt  u_{F(\cdot)} \vw\| &\ge | \langle g_\vt u_{F(\cdot)} \vw,\vw_0\rangle|  =  | \langle u_{F(\cdot)} \vw,\vw_0\rangle| \cdot\|g_\vt \vw_0\|\\  &\ge  e^{t/\max(m,n)} | \langle u_{F(\cdot)} \vw,\vw_0\rangle| \,.
 \end{split}
\end{equation*}
 Therefore  \equ{di suffcond} holds whenever
$ce^{T/\max(m,n)}\ge 1$, where $c$ is as in Lemma \ref{lem: nonpl} applied to $\vd\circ F$. 
 \end{proof}

 On the other hand, in many
examples one can violate the necessary and sufficient condition of  Theorem \ref{thm: criterion}
in a simple way as follows:

\begin{cor}\name{cor: counterex}   Let   
  $F: U\to\mr
$,
$\nu$ 
 and $\ft$ be as in  Theorem \ref{thm: criterion}, 
 and suppose that for some $\x\in U$, $\vw\in\fw$  and $\vt_0\in\ft$ one has
$ u_{F(\x)}\vw\in E^-_{\vt_0} $. 
 Also suppose that $\ft$ contains the whole ray passing through $\vt_0$. 
 Then $g_{\mathcal{T}}
\Lambda_{F(\x)}$
 has linear growth. 
\end{cor}

\begin{cor}\name{cor: suff cond}   Let    an open subset  $U$ of $\R^d$, 
  a map $F: U\to\mr
$
and  a 
 measure  $\nu$  on
$U$ be given. Also fix an unbounded subset $\ft$ of $\fa$. Suppose that the pair $(\vd\circ F,\nu)$ is good, and also that 
 for $\nu$-almost every point $\vx_0$ of $U$ and 
every ball $B\subset U$ centered at $\vx_0$ one has there exists $\rho > 0$
and positive $E, \alpha,\vre_0, t_0$
such that for any $\,0 < \vre \le \vre_0$ and $\vt\in\mathcal{T}$ with $\|\vt\| \ge t_0$ 
one has
\eq{measest}{
\nu\left(
\big\{\x\in B : g_{\vt} \Lambda_{F(\x)}
\notin K_\vre\big\} \right) \le E\vre^\alpha \nu(B)\,.}
Then $\nu\big(\{\x\in U \mid g_{\mathcal{T}}  \Lambda_{F(\x)}\text{  
 has linear growth}\}\big) = 0$.
\end{cor}

Our attention will be thus centered on condition (ii) of Theorem \ref{thm: friendly nondivergence}.
In other words, we would like to have a condition sufficient for 
}

\section{Consequences of Theorem \ref{thm: strexgeneral}}
\name{indepvar}
Our goal in this section is to construct examples of pairs $(F,\nu)$ such that
the assumptions of Theorem \ref{thm: strexgeneral} 
are satisfied. As mentioned in \S\ref{statements}, whenever $\vf: U\to \R^N$ is a nondegenerate smooth map, the pair $(\vf,\lambda)$ is  
good and
nonplanar   (see \cite[Proposition 3.4]{KM}). 
Since Lebesgue measure is  Federer, Theorem  \ref{thm: strexgeneral} as a special case implies

\begin{cor}\name{cor: nondeg} Let  $F:U\to \mr$ be a differentiable map such that $\vd\circ F$ is nondegenerate.  Then $F_*\lambda$
is strongly extremal.
\end{cor}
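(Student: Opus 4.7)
The plan is to observe that Corollary \ref{cor: nondeg} is an immediate specialization of Theorem \ref{thm: strexgeneral} to the case $\nu = \lambda$, so the entire argument consists of verifying the three hypotheses of that theorem. First, Lebesgue measure on $\R^d$ is well known to be Federer (one may take $D = 3^d$, since $\lambda(\frac13 B) = 3^{-d}\lambda(B)$ for every Euclidean ball $B$). Thus the only work is to check conditions (i) and (ii) of Theorem \ref{thm: strexgeneral} for the map $\vg \df \vd\circ F : U \to \R^N$.

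For condition (i), I would invoke \cite[Proposition 3.4]{KM}, which says precisely that whenever $\vg : U \to \R^N$ is smooth and $\ell$-nondegenerate at $\x$, there is a neighborhood $V$ of $\x$ and constants $C,\alpha$ (depending only on $d$, $N$ and $\ell$) so that every linear combination of $1, g_1,\dots, g_N$ is $(C,\alpha)$-good on $V$ with respect to $\lambda$. Since $\vg = \vd\circ F$ is nondegenerate by hypothesis, such a neighborhood exists at $\lambda$-a.e.\ $\x \in U$, so the pair $(\vg,\lambda)$ is good in the sense defined in \S\ref{statements}.

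For condition (ii), I would argue by contradiction. Suppose $(\vg,\lambda)$ fails to be nonplanar: then there exist a ball $B\subset U$ with $\lambda(B)>0$ and scalars $a_0,a_1,\dots,a_N$, not all zero, such that the function $\varphi \df a_0 + \sum_{i=1}^N a_i g_i$ vanishes identically on $B$. Consequently $\partial^\beta \varphi \equiv 0$ on $B$ for every multi-index $\beta$ with $|\beta|\ge 1$, i.e.\ $\sum_{i=1}^N a_i\, \partial^\beta g_i(\x) = 0$ for every $\x \in B$ and every such $\beta$. Pick a point $\x \in B$ at which $\vg$ is $\ell$-nondegenerate for some $\ell$ (such a point exists in $B$ because nondegeneracy holds $\lambda$-a.e.\ and $\lambda(B)>0$); then the vectors $\{\partial^\beta\vg(\x) : 1 \le |\beta| \le \ell\}$ span $\R^N$, forcing $a_1 = \cdots = a_N = 0$, and then also $a_0 = 0$ from $\varphi(\x)=0$, a contradiction.

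Having verified all three hypotheses, I would then apply Theorem \ref{thm: strexgeneral} to conclude that $F_*\lambda$ is strongly extremal. There is essentially no obstacle here: condition (i) is quoted from \cite{KM}, nonplanarity is a routine Taylor-type consequence of nondegeneracy, and the Federer property of $\lambda$ is trivial. The whole substance of the result has already been absorbed into Theorem \ref{thm: strexgeneral}.
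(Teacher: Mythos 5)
Your proposal is correct and follows exactly the route the paper takes: the paper derives Corollary \ref{cor: nondeg} by noting that Lebesgue measure is Federer, that $(\vd\circ F,\lambda)$ is good by \cite[Proposition 3.4]{KM}, and that nonplanarity is immediate from nondegeneracy, and then citing Theorem \ref{thm: strexgeneral}. The only difference is that you spell out the Taylor-expansion argument for nonplanarity, which the paper dismisses as ``immediate''; your version of that step is valid.
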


Specific examples include
\eq{examples new}{
\begin{aligned}x \mapsto \begin{pmatrix} x & x^2\\ x^3 & x^5\end{pmatrix}\,,\ \  \text{ or } \ \ x \mapsto  \begin{pmatrix} x & x^2 & x^3  \\ x^4 & x^6 & x^8\end{pmatrix}\,,\end{aligned}} 
where $x\in\R$. 
More generally, here is a definition introduced in \cite{KLW}: given 
 $C$, $\alpha > 0$ and  an open subset $U$ of $\R^d$,   say that
$\nu$ is 
{\em absolutely decaying\/} if for  $\nu$-a.e.\
$\x \in \R^n$  there exist a neighborhood $U$ of $\x$ and $C,\alpha >
0$ such that  for 
any non-empty open ball $B \subset U$ centered at $\supp\,\nu$, 
any affine hyperplane $\mathcal{L} \subset
\R^n$  and any $\varepsilon >0$
one has
\begin{equation*}
\label{eq: defn abs decaying}
{\nu \left( B \cap \mathcal{L}^{(\varepsilon)} \right) }
\le C
\left(
\frac{\varepsilon}{r}
\right)^{\alpha}{\nu(B)}\,,
\end{equation*}
where $r$ is the radius of $B$ and $\mathcal{L}^{(\varepsilon)}$
is the $\vre$-neighborhood of $\mathcal{L}$. 
%
%
 Following a terminology suggested in \cite{PV}, say that
$\nu$ is {\sl absolutely friendly\/} if it is Federer and absolutely decaying. The following was essentially 
proved in \cite{KLW} (see \cite[Theorem 2.1(b) and \S7]{KLW}): suppose that  $\nu$ is an absolutely friendly measure, $\ell\in\N$, and $\,\vf$ is a 
$C^{\ell+1}$  map which  is $\ell$-nondegenerate at
$\nu$-a.e.\ point; then $(\vf,\nu)$  is good. Since the nonplanarity of 
$(\vf,\nu)$ is immediate from the nondegeneracy condition,  the following is 
also a special case of Theorem \ref{thm: strexgeneral}:

\begin{cor}\name{cor: af}
Let $\nu$ be an absolutely friendly measure on $\R^d$, $U$ an open subset of
$\R^d$, $\ell\in\N$, and   $F:U\to\mr$ a $C^{\ell+1}$  map such that $\vd \circ F$ is 
$\ell$-nondegenerate at
$\nu$-almost every point.  Then $F_*\nu$
is strongly extremal.
\end{cor}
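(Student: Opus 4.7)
The plan is to derive Corollary~\ref{cor: af} as a direct application of Theorem~\ref{thm: strexgeneral} to the pair $(\vd\circ F,\nu)$. This requires checking its three standing hypotheses: that $\nu$ is Federer, and that $(\vd\circ F,\nu)$ is both good and nonplanar. The Federer property of $\nu$ is immediate from the definition of absolute friendliness.

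For goodness, I would apply verbatim the result from \cite{KLW} quoted in the sentence preceding the corollary: for an absolutely friendly measure $\mu$ and a $C^{\ell+1}$ map that is $\ell$-nondegenerate at $\mu$-a.e.\ point, the pair is good. Each coordinate of $\vd\circ F$ is a polynomial (in fact a subdeterminant) in the entries of $F$, so $\vd\circ F$ inherits the $C^{\ell+1}$ regularity of $F$, and $\ell$-nondegeneracy at $\nu$-a.e.\ point of $\vd\circ F$ is a direct hypothesis; thus $(\vd\circ F,\nu)$ is good.

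For nonplanarity, I would argue by contradiction using the standard scheme for absolutely friendly measures. If $(\vd\circ F,\nu)$ were not nonplanar, there would exist a ball $B\subset U$ with $\nu(B)>0$ and a nonzero affine functional $\phi:\R^N\to\R$ such that $\phi\circ\vd\circ F$ vanishes on $B\cap\supp\nu$. But $\phi\circ\vd\circ F$ is then a nonzero $C^{\ell+1}$ function on $U$, and at any point where $\vd\circ F$ is $\ell$-nondegenerate, some partial derivative of $\phi\circ\vd\circ F$ of order at most $\ell$ is nonzero. Hence, locally, the zero locus of $\phi\circ\vd\circ F$ is contained in a proper $C^{\ell+1}$ subvariety of positive codimension. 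The absolute decay of $\nu$ then forces the $\nu$-measure of this subvariety in $B$ to vanish, contradicting $\nu(B)>0$.

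The main (and modest) obstacle is this final measure-theoretic step--converting $\ell$-nondegeneracy together with absolute decay into vanishing of $\nu$-mass on a hyperplane preimage--but it is essentially the same argument used in \cite{KLW} to derive nonplanarity from nondegeneracy in the vector case. Once the three hypotheses are checked, Theorem~\ref{thm: strexgeneral} applies and strong extremality of $F_*\nu$ follows at once.
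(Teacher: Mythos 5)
Your proposal is correct and follows essentially the same route as the paper: verify that $\nu$ is Federer (part of absolute friendliness), obtain goodness of $(\vd\circ F,\nu)$ from the quoted \cite{KLW} result applied to the $C^{\ell+1}$ map $\vd\circ F$, deduce nonplanarity from $\ell$-nondegeneracy together with absolute decay, and then invoke Theorem~\ref{thm: strexgeneral}. The paper simply states that nonplanarity is immediate from the nondegeneracy condition (deferring, as you do, to the argument of \cite{KLW}), whereas you spell out the zero-locus and measure-theoretic details; this is elaboration, not a different proof.
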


Numerous examples of absolutely friendly measures have been constructed  in 
\cite{KLW, bad, Urbanski, Urbanski-Str}. In particular, limit measures of finite 
irreducible systems of contracting similarities \cite[\S8]{KLW}
(or, more generally, self-conformal contractions, \cite{Urbanski}) 
satisfying the
open set condition are absolutely friendly. Thus, if $\nu$ is, say, the natural measure
on the Cantor set in $\R$ and $F$ is one of the maps of the form \equ{examples new},
the pushforward of $\nu$ by $F$ is strongly extremal.

\medskip

In general checking the nondegeneracy of $\vd \circ F$ may be a complicated task.
However, in the important special case 
when 
the rows (or 
columns) of $F$ are functions of independent variables, 
the assumptions of Theorem \ref{thm: strexgeneral} turn out to be easier to check.
Namely, the following is true:

\begin{thm}\name{thm: indepvargeneral} For every $i = 1,\dots,m$, let $\vf_i$ be a continuous
 map
from an
 open subset $U_i$ of $\br^{d_i}$ to $\br^n$, and  let $\nu_i$ be a Federer measure on
$\br^{d_i}$  such that for each $i$, the pair $(\vf_i,\nu_i)$ is good and nonplanar. 
Define $F$ by \equ{f} and let $\nu = \nu_1\times\dots\times \nu_m$. Then  {\rm (a)} $\nu$ is Federer,
and  $(\vd \circ F,\nu)$ is 
 {\rm (b)}  good and
 {\rm (c)} nonplanar.
\end{thm}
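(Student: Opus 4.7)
The plan is to verify (a), (b), (c) in turn, with (b) and (c) proved by induction on $m$, slicing in the first variable $\vx_1$ at each step. For (a), I equip $\br^{d_1+\dots+d_m}$ with a sup-product norm so that balls centered at points of $\supp\,\nu$ decompose as products $B=B_1\times\dots\times B_m$ with $B_i$ a ball in $\br^{d_i}$ of the same radius centered at $\supp\,\nu_i$. On a product neighborhood $U_1'\times\dots\times U_m'$ where each $\nu_i$ is $D_i$-Federer, Fubini gives $\nu(\tfrac13 B)=\prod_i\nu_i(\tfrac13 B_i)\ge (\prod_i D_i)^{-1}\nu(B)$, so $\nu$ is $\prod_i D_i$-Federer there. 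Since the Federer property is preserved under bi-Lipschitz changes of metric up to constants, this settles (a).

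The structural observation powering (b) and (c) is that every component $y_{I,J}$ of $\vd(Y)$ is a determinant whose $i$-th row depends only on $\vx_i$ when $Y=F(\vx_1,\dots,\vx_m)$, so any linear combination $L=c_0+\sum a_{I,J}\,(y_{I,J}\circ F)$ is (by multilinear expansion) a sum of products of the form $\prod_{i\in I} f_{i,j_i}(\vx_i)$. Grouping by the dependence on $\vx_1$ yields
\[ L(\vx_1,\dots,\vx_m)=G_0(\vx_2,\dots,\vx_m)+\sum_{j=1}^n f_{1,j}(\vx_1)\,G_j(\vx_2,\dots,\vx_m), \]
and each $G_j$ is itself a linear combination of $1$ and components of $\vd\circ\til F_j$, where $\til F_j$ is $F$ with its first row removed ($j=0$) or with its first row and $j$-th column removed ($j\ge 1$). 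Since linear independence and the $(C,\alpha)$-good property pass to any coordinate projection, the rows of $\til F_j$ together with $\nu_2,\dots,\nu_m$ still satisfy the hypotheses of the theorem, so the inductive hypothesis may be applied to $(\vd\circ\til F_j,\nu_2\times\dots\times\nu_m)$.

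For (b), the $\vx_1$-slice of $L$ is a linear combination of $1,f_{1,1},\dots,f_{1,n}$, hence $(C,\alpha)$-good on a neighborhood w.r.t.\ $\nu_1$ by assumption. By Lemma \ref{lem: nonpl} applied to $(\vf_1,\nu_1)$, the sup-norm $\|L(\cdot,\vx_2,\dots,\vx_m)\|_{\nu_1,B_1}$ is comparable, with constants depending only on $B_1$, to $\max_{0\le j\le n}|G_j(\vx_2,\dots,\vx_m)|$. Each $G_j$ is good w.r.t.\ $\nu_2\times\dots\times\nu_m$ by the inductive hypothesis, the max of finitely many good functions is good, and comparability preserves the good property, so a Fubini-type good lemma on product measures yields that $L$ is $(C',\alpha')$-good on a product neighborhood w.r.t.\ $\nu$. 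For (c), if $L$ vanishes $\nu$-a.e.\ on $B_1\times\dots\times B_m$, Fubini gives that for $(\nu_2\times\dots\times\nu_m)$-a.e.\ $(\vx_2,\dots,\vx_m)$ the slice $L(\cdot,\vx_2,\dots,\vx_m)$ vanishes on $\supp\,\nu_1\cap B_1$; nonplanarity of $(\vf_1,\nu_1)$ forces each $G_j$ to vanish on $\supp(\nu_2\times\dots\times\nu_m)\cap(B_2\times\dots\times B_m)$, and the inductive hypothesis applied to $(\vd\circ\til F_j,\nu_2\times\dots\times\nu_m)$ forces all original coefficients $a_{I,J}$ and $c_0$ to vanish.

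The main obstacle is the bookkeeping in (b): ensuring that the Fubini-type good lemma is invoked with uniform constants across slices requires that the comparability from Lemma \ref{lem: nonpl} be realized on a single ball $B_1$ independent of $(\vx_2,\dots,\vx_m)$, and that the quantitative passage through \emph{``good functions are closed under finite maxima''} and \emph{``a function comparable to a good function is good''} be tracked carefully so that the resulting $(C,\alpha)$ constants at each level of the induction remain under control.
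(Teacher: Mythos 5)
Your overall strategy matches the paper's: induction on $m$, cofactor expansion along the first row to isolate the dependence on $\vx_1$, a product-good lemma for (b), and the nonplanarity of the factors for (c). Parts (a) and (c) are fine as written (the paper cites \cite[Theorem 2.4]{KLW} for (a), and proves (c) in the quantitative form supplied by Lemma \ref{lem: nonpl} rather than your vanishing/contrapositive form, but by that lemma the two are equivalent; your two ways of grouping the bilinear expansion — by the functions $f_{1,j}(\vx_1)$ versus by the minors of the truncated matrix — are also just dual readings of the same identity, both of which the paper uses).

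The gap is in part (b). You pass to goodness of $L$ via the comparability $\|L(\cdot,\vx')\|_{\nu_1,B_1}\asymp\max_{j}|G_j(\vx')|$, whose lower constant $c_1=c_1(B_1)$ comes from Lemma \ref{lem: nonpl}. But the $(C,\alpha)$-good property must hold with the \emph{same} constants for \emph{every} ball in the neighborhood, and when the slicing argument is run on a ball $B$ whose first-factor projection $B_1$ is small, $c_1(B_1)$ degenerates: e.g.\ already for $\sum_j a_jf_{1,j}$ with $a_0=0$ and $\max_j|a_j|=1$, the sup over $B_1\cap\supp\nu_1$ can tend to $0$ as $B_1$ shrinks. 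The constants $(C',\alpha')$ you extract therefore depend on the ball through the ratio of the comparability constants, and goodness is not established; this is exactly the uniformity issue you flag at the end, and your route does not resolve it. The repair is to drop the comparability step: for each \emph{fixed} $\vx_1$, regroup your own expansion the other way, writing $L(\vx_1,\cdot)$ as a linear combination of $1$ and the components of $\vd\circ F'$ (with $F'$ the matrix of rows $2,\dots,m$) whose coefficients are $a_{I,J}+\sum_{j\notin J}\pm a_{I\cup\{1\},J\cup\{j\}}f_{1,j}(\vx_1)$; the inductive hypothesis then makes $L(\vx_1,\cdot)$ $(C',\alpha')$-good with respect to $\nu_2\times\dots\times\nu_m$ with constants independent of $\vx_1$. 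Combined with goodness in $\vx_1$ for each fixed $\vx'$, this verifies the hypotheses of Lemma \ref{lem: prod good} (\cite[Lemma 2.2]{KT}), whose proof controls the sublevel sets of $\vx'\mapsto\|L(\cdot,\vx')\|_{\nu_1,B_1}$ without any comparability constants, via the inclusion of $\{\vx': \|L(\cdot,\vx')\|_{\nu_1,B_1}<\delta\}$ in $\{\vx': |L(\vx_1,\vx')|<\delta\}$ for every $\vx_1\in B_1\cap\supp\nu_1$. This is precisely the paper's argument.
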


In view of the discussion preceding Corollary \ref{cor: nondeg}, the above theorem includes Proposition  \ref{prop: indepvar} as a special case;
hence its proof also establishes Theorem \ref{thm: strex}.

\begin{proof} The fact that the  product of Federer measures is Federer is straightforward,
see e.g.\ \cite[Theorem 2.4]{KLW}. For parts  (b) and (c) we will use  induction on $m$.
The case $m = 1$ is obvious since in that case $\vd \circ F$ is the same as $F$.
The induction step is based on the following elementary 
observation: given  \amr\ with $m > 1$, any linear combination  of   components 
of $\vd(Y)$  and $1$, that is,
\eq{lincomb1}{
\sum_{\substack{I\subset  \{1,\dots,m\},\  J\subset  \{1,\dots,n\}\\  0  \le  |I| = |J| \le \min(m,n)}}a_{I,J}y_{I,J}
}
can be rewritten as 
\eq{lincomb2}{
\sum_{\substack{I\subset  \{2,\dots,m\},\  J\subset  \{1,\dots,n\}\\  0  \le  |I| = |J| \le \min(m-1,n)}}\left( a_{I,J} + \sum_{j \notin J} a_{I \cup\{1\},J\cup\{j\}}y_{1,j}\right)y_{I,J}\,,
}
where the choice of signs in $\pm$ depends on  $j$ and $J$. 

Let us first establish the nonplanarity of $(\vd \circ F,\nu)$. 
Denote $\x' = (\x_2,\dots,\x_m)$, $\nu' = \nu_2\times\dots\times \nu_m$,  and let
$$
F': U_2\times\dots\times U_m\to\mr,\quad \vx' \mapsto \begin{pmatrix} \vf_2(\vx_2)\\
\vdots\\
\vf_m(\vx_m)\end{pmatrix}
$$
Assume that the statement is true for $m-1$ in place of $m$, which in particular 
implies that the pair $(\vd\circ F',\nu') $ is nonplanar.
Take a ball $B\subset \R^{d_1 + \dots + d_m}$ with $\nu(B) > 0$. 
Choose coefficients $a_{I,J}\in\R$, where $I\subset  \{1,\dots,m\}$, $J\subset  \{1,\dots,n\}$, 
 $0  \le  |I| = |J| \le \min(m,n)$, such that one of them has absolute value at least $ 1$, and denote
$$
\varphi(\x) = \sum_{I,J}a_{I,J}f(\x) _{I,J}
$$ 
Then, using the equivalence of \equ{lincomb1} and \equ{lincomb2}, one can write
\eq{phi}{
\varphi(\x) =  
\sum_{\substack{I\subset  \{2,\dots,m\},\  J\subset  \{1,\dots,n\}\\  0  \le  |I| = |J| \le \min(m-1,n)}}\left( a_{I,J} + \sum_{j \notin J} a_{I \cup\{1\},J\cup\{j\}}f_{1,j}(\x_1)\right)f(\x')_{I,J}\,.
}
Since $\max |a_{I,J}| \ge 1$, one can choose $I\subset  \{2,\dots,m\}$ and $  J\subset  \{1,\dots,n\}$
such that the absolute value of some coefficient  in the expression 
$$a_{I,J} + \sum_{j \notin J} a_{I \cup\{1\},J\cup\{j\}}f_{1,j}$$ is at least $ 1$.
Since $(\vf_1,\nu_1)$ is nonplanar, Lemma \ref{lem: nonpl} implies that there exists $\tilde\x_1\in B_1\cap \supp\,\nu_1$
and $c_1 > 0$ such that $|a_{I,J} + \sum_{j \notin J} a_{I \cup\{1\},J\cup\{j\}}f_{1,j}(\x_1)|\ge c_1$. 
Fixing $\vx_1 = \tilde\vx_1$, we infer that at least one of the functions $f(\cdot)_{I,J}$ in the
linear combination \equ{phi} has a coefficient of absolute value at least $c_1$. From  the nonplanarity of $(\vd\circ F',\nu')$ we can then    deduce the existence of $c > 0$ and $\vx'$ such that $\vx = (\tilde\vx_1,\vx')\in B\cap \supp\,\nu$ and $|\varphi(\x)| > c$. This, again in view of Lemma  \ref{lem: nonpl},  shows the nonplanarity of $(\vd\circ F,\nu)$.

\medskip 
The proof of part (c) goes along similar lines and is based on the following 

\begin{lem}[\cite{KT}, Lemma 2.2]\label{lem: prod good}   Let   metric spaces $X,Y$ with measures $\mu,\nu$
be given. Suppose $\varphi$ is a continuous function  on $U\times V$,
where $U\subset X$ and $V\subset Y$ are open subsets, and
suppose $C,D, \alpha, \beta$
are positive constants such that
$$
\aligned
\text{for  all $y\in V\cap\,\supp\,\nu$, the function }x\mapsto \varphi&(x,y)\\
\text{ is
$(C,\alpha)$-good on $U$ with respect to $\mu$}\,,
\endaligned
$$
and
$$
\aligned
\text{for  all $x\in U\cap\,\supp\,\mu$, the function }y\mapsto \varphi&(x,y)\\
\text{  is
$(D,\beta)$-good on $V$  with respect to $\nu$}
\,.\endaligned
$$
Then $\varphi$ is $(E,\gamma)$-good on $U\times V$  with respect to $\mu\times \nu$,
         where $E$ and $\gamma$ can be explicitly expressed in terms of $\alpha, \beta, C, D$.
\end{lem}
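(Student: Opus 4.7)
The plan is to prove the lemma via a Fubini-type decomposition that feeds both goodness hypotheses into complementary regions of a product ball. First I would reduce to the case where the ambient ball in $U \times V$ is a product $B = B_1 \times B_2$ of balls $B_i$ centered in $\supp \mu$ and $\supp \nu$ respectively; this is automatic under the usual convention of taking the maximum-of-distances metric on $X \times Y$. Set $M := \|\varphi\|_{\mu \times \nu,\, B}$ and assume $M > 0$, the other case being trivial.

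Next, introduce an auxiliary threshold $\delta \in (0,1)$ to be optimized, and split $B_2 \cap \supp\nu$ into the ``thin slice'' set
$$V_\delta := \big\{y \in B_2 \cap \supp\nu : \|\varphi(\cdot, y)\|_{\mu, B_1} < \delta M\big\}$$
and its complement. The first key step is to estimate $\nu(V_\delta)$. By continuity of $\varphi$ and the definition of $M$, there is a point $(x_0, y_0) \in (B_1 \cap \supp\mu) \times (B_2 \cap \supp\nu)$ with $|\varphi(x_0, y_0)|$ arbitrarily close to $M$, so $\|\varphi(x_0, \cdot)\|_{\nu, B_2} \ge M$ up to an arbitrarily small multiplicative loss that can be absorbed into the final constants. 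For any $y \in V_\delta$ one has
$$|\varphi(x_0, y)| \le \|\varphi(\cdot, y)\|_{\mu, B_1} < \delta M,$$
so the $(D,\beta)$-good hypothesis applied to $\varphi(x_0, \cdot)$ yields $\nu(V_\delta) \le D' \delta^\beta \nu(B_2)$ for an explicit $D'$.

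Then Fubini gives
\begin{align*}
(\mu \times \nu)\big(\{|\varphi| < \varepsilon\} \cap B\big)
&= \int_{B_2} \mu\big(\{x \in B_1 : |\varphi(x, y)| < \varepsilon\}\big)\, d\nu(y) \\
&\le \mu(B_1)\, \nu(V_\delta) + \int_{B_2 \setminus V_\delta} C\left(\frac{\varepsilon}{\|\varphi(\cdot, y)\|_{\mu, B_1}}\right)^\alpha \mu(B_1)\, d\nu(y) \\
&\le \Big(D'\delta^\beta + C(\varepsilon/(\delta M))^\alpha\Big) (\mu \times \nu)(B),
\end{align*}
using the $(C,\alpha)$-good hypothesis on the second integral together with the lower bound $\|\varphi(\cdot,y)\|_{\mu, B_1} \ge \delta M$ off $V_\delta$. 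Finally, choosing $\delta = (\varepsilon/M)^{\alpha/(\alpha + \beta)}$ (valid when $\varepsilon \le M$; the opposite regime is absorbed into the constant $E$) balances the two terms and produces an estimate of the form $E(\varepsilon/M)^\gamma (\mu \times \nu)(B)$ with $\gamma = \alpha\beta/(\alpha + \beta)$ and $E$ explicitly computable from $\alpha, \beta, C, D$.

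The main technical nuisance is the gap between $M$ (a supremum over $(B_1 \cap \supp\mu) \times (B_2 \cap \supp\nu)$) and the quantity $\|\varphi(x_0, \cdot)\|_{\nu, B_2}$ used to invoke $(D,\beta)$-goodness; continuity of $\varphi$ and a harmless $\eta$-loss handle this, and the reduction of arbitrary balls in $U \times V$ to products of balls is routine once one fixes a product metric. Everything else is bookkeeping of the type standard in the manipulation of $\cag$ functions as in \cite{KM}.
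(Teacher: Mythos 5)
The paper does not actually prove this lemma --- it is quoted from \cite[Lemma 2.2]{KT} --- and your argument is a correct, self-contained proof that follows essentially the same route as the cited source: Fubini over a product ball $B_1\times B_2$, a bound on the $\nu$-measure of the set of slices $y$ with $\|\varphi(\cdot,y)\|_{\mu,B_1}<\delta M$ obtained by applying the $(D,\beta)$-good hypothesis to $\varphi(x_0,\cdot)$ for an $x_0\in B_1\cap\supp\mu$ nearly attaining the supremum $M$, the $(C,\alpha)$-good hypothesis on the remaining slices, and optimization of $\delta$ yielding $\gamma=\alpha\beta/(\alpha+\beta)$ and $E=C+D$. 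The only delicate points --- the $\eta$-loss (removed by letting $\eta\to 0$, since the estimate holds for every $\eta>0$) and the convention that balls in the product metric are products of balls --- are exactly the ones you identify, and both are handled correctly.
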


It is given that $\nu_1$-a.e.\ point of $\R^{d_1}$ 
has a neighborhood $U_1$ such that $(\vf_1,\nu_1)$ is $(C_1,\alpha_1)$-good on $U_1$
for some $C_1,\alpha_1 > 0$. 
From the induction assumption  it follows that $\nu'$-a.e.\ point of 
$\R^{d_2 + \dots + d_m}$
has a neighborhood $U'$ such that $(\vd\circ F',\nu')$ is $(C',\alpha')$-good on $U'$
for some $C',\alpha' > 0$. 
Taking $U = U_1$, $V = U'$
and  $\varphi$ as in \equ{phi}, one sees that the assumptions of the above 
lemma are satisfied, and therefore for $\nu$-a.e.\ $(\x_1,\x')$ there exists a 
 neighborhood $U$ of $ (\x_1,\x')$ and $C,\alpha > 0$ such that $(\vd\circ F,\nu)$ is $(C,\alpha)$-good on $U$. This finishes the proof of Theorem \ref{thm: indepvargeneral}.
 \end{proof}

\section{Low-dimensional examples}
\name{moreexamples}

It is not hard to guess, looking at the information used in Corollary \ref{cor: uniform lower},
that 
it might be possible to weaken the nonplanarity assumption  of Theorem \ref{thm: strexgeneral} by requiring only some, and not all, linear combimations of components of 
$\vd\circ F$  to be nonzero.
In this section we consider some low-dimensional special cases 
and exhibit conditions
sufficient for strong extremality and extremality of $F_*\nu$ which are weaker
than the ones required by Theorem \ref{thm: strexgeneral}, thus  generating
 new examples of 
extremal and strongly extremal measures.

With some abuse of notation, let us
 introduce the following definition: say that a pair $(F,\nu)$, where
$F: U\to \mr$ and $\nu$ is a measure on $U$, is {\sl nonplanar\/} if for any ball $B\subset U$
with $\nu(B) > 0$ and any   nonzero $\vv\in\R^n$, the 
restriction of the map
$\vx\mapsto F(\vx) \vv$ to $B\,\cap \,\supp\,\nu$
is nonconstant. Clearly it coincides with the definition
of nonplanarity if $m = 1$, and clearly  $(F,\nu)$ is  row-nonplanar if
$F$ has a row $\vf$ such that $(\vf,\nu)$ is  nonplanar (but converse is not true).

\medskip

For the first result of this section, let us take $m = n = 2$.

\begin{thm}\name{thm: twobytwo} Let $\nu$ be a Federer measure on
$\br^d$, $U\subset\br^d$ open,  and $F:U\to M_{2,2}$ a continuous map such that $(\vd \circ F,\nu)$ is 
 good.
  \begin{itemize}
  \item[(a)] Suppose that   $(\vf ,\nu)$ 
is nonplanar for any row or column $\vf$ of $F$; then $F_*\nu$
is strongly extremal.
\item[(b)] Suppose that both $(F,\nu)$ and $(F^T,\nu)$ are row-nonplanar;
then $F_*\nu$ 
is  extremal.
\end{itemize}
\end{thm}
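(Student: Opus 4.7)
The plan is to invoke Theorem \ref{thm: criterion}: for part (a) (strong extremality, $\ft = \fa$) it suffices to verify \equ{condition} for every $\vw\in \fw$ and every $\vt \in \fa$, and for part (b) the same with $\ft = \fr$. In both parts I will in fact establish the stronger uniform bound $\|g_\vt u_{F(\cdot)}\vw\|_{\nu,B} \geq c(B) > 0$ independent of $\vt$. Expanding $u_Y\vw$ via \equ{action} in the eigenbasis $\{\ve_I\wedge \vv_J\}$ of $g_\vt$, the cases $\vw \in \fw_1$ and $\vw \in \fw_3$ are straightforward: for $\vw\in\fw_1$, if the $\vv$-component of $\vw$ is nonzero then the $\ve_i$-coefficient of $u_Y\vw$ is an affine combination of $1$ and the entries of row $i$ of $F$ with at least one coefficient of absolute value $\geq 1$, and Lemma \ref{lem: nonpl} applied to any nonplanar row yields $\|\cdot\|_{\nu, B}\geq c$; if the $\vv$-component vanishes then $\|g_\vt\vw\|\geq e^{t_i}\geq 1$ trivially. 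The case $\vw\in\fw_3$ is dual and uses nonplanarity of columns.

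The heart of the argument is $\vw \in \fw_2$. Writing the Pl\"ucker coordinates $w_{ij}$ of $\vw$ (with $i<j\in\{1,2,3,4\}$ relative to the ordered basis $\ve_1,\ve_2,\vv_1,\vv_2$, subject to $w_{12}w_{34} - w_{13}w_{24} + w_{14}w_{23} = 0$), a direct expansion shows that for $i\in\{1,2\}$ and $j\in\{3,4\}$ the coefficient of $\ve_i\wedge \vv_{j-2}$ in $u_Y\vw$ has the form $w_{ij}\pm w_{34}\,y_{i,5-j}$; the coefficient of $\vv_1\wedge\vv_2$ is $w_{34}$; and the coefficient of $\ve_1\wedge\ve_2$ is a polynomial in the entries of $Y$. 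If $w_{34}\neq 0$, then $|w_{34}|\geq 1$, and Lemma \ref{lem: nonpl} applied to the nonplanar row $i$ gives a lower bound $\geq c(B)$ on the sup-norm of the $\ve_i\wedge\vv_{j-2}$-coefficient evaluated at $F(\cdot)$; since the four exponents $t_i - t_j$ (for $(i,j)\in\{1,2\}\times\{3,4\}$) sum to zero, at least one is nonnegative and the corresponding direction contributes $\geq c(B)^2$ to the overall estimate. If $w_{34} = 0$, the Pl\"ucker relation yields the rank-one factorization $w_{ij} = p_i q_{j-2}$ for integer vectors $\vp = (p_1,p_2)$ and $\vq = (q_1,q_2)$. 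When $\vp = 0$ or $\vq = 0$, primitivity forces $\vw = \pm\ve_1\wedge\ve_2$, so $\|g_\vt\vw\|^2 = e^{2t}$. When both are nonzero and all four $p_1, p_2, q_1, q_2$ are nonzero, AM--GM gives
\[
\|g_\vt(\vp\wedge\vq)\|^2 \;=\; \Bigl(\sum_{i=1,2} p_i^2\, e^{2t_i}\Bigr)\Bigl(\sum_{j=1,2} q_j^2\, e^{-2t_{j+2}}\Bigr) \;\geq\; 4\,|p_1 p_2 q_1 q_2|\;\geq\; 4,
\]
using $t_1 + t_2 = t_3 + t_4$. Otherwise some $p_i$ or $q_j$ vanishes (while $\vp,\vq$ remain nonzero), and the $\ve_1\wedge\ve_2$-coefficient of $u_Y\vw$, which carries eigenvalue $e^{2t}$, becomes a nontrivial affine combination of $1$ and the entries of a single row (when some $p_i = 0$) or single column (when some $q_j = 0$) of $F$; the corresponding nonplanarity hypothesis then provides the bound. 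This last subcase is precisely where both row and column nonplanarity hypotheses of (a) are indispensable.

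For part (b), with $\vt\in\fr$ the mixed eigenvalues $e^{t_i - t_{j+2}}$ all equal $1$ and the tolerance required in \equ{condition} weakens to $e^{-\beta t}$ for any $\beta > 0$. A similar but simpler dichotomy applies: if the $\ve_1\wedge\ve_2$-coefficient of $u_{F(\cdot)}\vw$ has positive sup-norm on $B$ then its $e^{2t}$-weighting dominates trivially; otherwise, primitivity of $\vw$ combined with the hypothesis that some row and some column of $F$ are nonplanar forces at least one mixed coefficient to have positive sup-norm, since simultaneous identical vanishing of all of them would (by a standard linear dependence argument applied separately to the row- and column-expressions) force every entry of $F$ to be constant on $B\cap\supp\nu$, contradicting the hypothesis. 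The main obstacle throughout is the bookkeeping in the $\fw_2$ analysis --- enumerating which Pl\"ucker coordinates may vanish and matching each configuration with the correct nonplanarity hypothesis; once this is arranged, all estimates follow from Lemma \ref{lem: nonpl} and the AM--GM inequality.
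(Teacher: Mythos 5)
Your part (a) is correct, and it reaches the same destination as the paper by a somewhat different route. The paper runs the $\fw_2$ case by picking the index $i_0$ with $t_{i_0}$ maximal (which places two mixed wedges together with $\ve_{\{1,2\}}$ inside $E^+_\vt$) and then splitting on which of $b_{11},b_{12},c$ vanish; you instead split on the Pl\"ucker coordinate $w_{34}=c$, use the Pl\"ucker relation to factor the mixed block as $p_iq_j$, and apply AM--GM in the ``all $p_i,q_j\neq 0$'' subcase. The AM--GM step, while correct, is unnecessary: when $c=0$ and some $b_{ij}=p_iq_j$ is a nonzero integer, that coefficient of $u_Y\vw$ is a constant integer of absolute value $\geq 1$, and since (as you note) at least one mixed direction is not contracted you already get a bound $\geq 1$. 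A harmless slip: the $g_\vt$-eigenvalue on $\ve_1\wedge\ve_2$ is $e^{t_1+t_2}=e^{t}$, not $e^{2t}$. The genuinely important point, which you do identify correctly, is that column nonplanarity is forced precisely in the subcase $c=0$ with a degenerate mixed block.

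Part (b), however, has a real gap. Its hypothesis is row-nonplanarity of $(F,\nu)$ and $(F^T,\nu)$, which the paper explicitly notes is \emph{strictly weaker} than the existence of a nonplanar row or column: it only asserts that $\x\mapsto F(\x)\vv$ is nonconstant on $B\cap\supp\nu$ for every $\vv\neq 0$. Your $\fw_1$ and $\fw_3$ arguments apply Lemma \ref{lem: nonpl} to ``any nonplanar row'' (resp.\ column), and your $\fw_2$ argument for (b) invokes ``the hypothesis that some row and some column of $F$ are nonplanar'' --- neither is available under (b). The repair, which is what the paper does, is to treat the relevant coefficients as a vector: for $\vw\in\fw_1$ the projection onto $\Span(\ve_1,\ve_2)$ is $F(\cdot)\vb+\va$ with $(\va,\vb)\in\Z^4\nz$, and for $\vw\in\fw_2$ with $\vt\in\fr$ one projects onto $\Span(\ve_1\wedge\vv_1,\ve_2\wedge\vv_1)$ to get $-cF(\cdot)\ve_2+(b_{11},b_{21})^T$; row-nonplanarity of $(F,\nu)$ then applies (the transposed hypothesis only enters through the dual case $\fw_3$), via a compactness analogue of Lemma \ref{lem: nonpl} for such vector-valued affine expressions. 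Relatedly, your dichotomy ``either the $\ve_1\wedge\ve_2$-coefficient has positive sup-norm or it vanishes identically'' does not deliver what \equ{condition} and Corollary \ref{cor: uniform lower} require, namely a single constant $c(B)>0$ working for \emph{all} $\vw\in\fw$ and $\vt\in\ft$ at once; as in your part (a), the case analysis must be organized by which integer coordinates of $\vw$ are nonzero (hence of absolute value $\geq 1$), so that the uniform lower bound comes from one application of the compactness lemma rather than from a $\vw$-dependent ``positive sup-norm''.
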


As was mentioned before, $(\vd \circ F,\nu)$ happens to be 
 good 
 when $\nu$ is Lebesgue and functions $f_{ij}$
are real analytic. Thus, in particular, the pushforwards of Lebesgue measure by
$$
x\mapsto \begin{pmatrix} x & x^2\\ x^3 & x^4\end{pmatrix}\quad\text{ or } \quad x\mapsto \begin{pmatrix} x & x^2\\ x^2 & x\end{pmatrix}
$$
are strongly extremal, even though the determinant of the first map is identically zero, 
and the image of the second one is contained in a two-dimensional subspace of $M_{2,2}$ -- and therefore the nonplanarity condition of Theorem \ref{thm: strexgeneral} is violated in both cases.
Likewise, the pushforwards of Lebesgue measure by  
$
x\mapsto \begin{pmatrix} x & x\\ x & x^2\end{pmatrix}$ or even $
x\mapsto \begin{pmatrix} x & x\\ x & 2x\end{pmatrix}$ are extremal 
(it is clear that strong extremality
fails in the latter cases).
The proof will be an illustration of techniques described in  \S\ref{exp}: we will
estimate from below the norms of  projections of 
$u_{F(\cdot)}\vw$ onto  $E_\vt^+$ uniformly in $\vw$ and $\vt$.

\begin{proof}[Proof of Theorem \ref{thm: twobytwo}] We will be proving  both parts simultaneously,
since they are based on the same  computation. 
We need to look through elements of $\fw_\ell$ where $\ell = 1$, $2$ or $3$. 
Since the assumptions on $F$ are obviously invariant under transposition, the computations for $\ell=1$ and $\ell=3$ are identical, i.e.\ dual to each other (see the remark at the
end of \S\ref{dioph}). Thus the two cases to consider correspond to vectors and bi-vectors $\vw$ respectively. As in \S\ref{nondiv}, we will denote the standard basis of $\R^4$ by $\{\ve_1,\ve_2,\vv_1,\vv_2\}$, so that the $u_Y$-action is described via \equ{actionbasis} and \equ{action}.

First consider $\ell = 1$ and take
$$\vw=a_1\ve_1+a_2\ve_2+b_1\vv_1+b_2\vv_2\in\fw_1\,.$$ 
Note that for any $\vt\in\fa$, $E_\vt^+\cap \bigwedge^1(\R^4)$ is spanned by $\ve_1$ and $\ve_2$,
and therefore
$$
\pi_\vt^+ (u_{F(\vx)}\vw)=\big(a_1+b_1f_{11}(\vx)+b_2f_{12}(\vx)\big)\ve_1+\big(a_2+b_1f_{21}(\vx)+b_2f_{22}(\vx)\big)\ve_2\,.$$
Identifying $\ve_1$ with $\begin{pmatrix}1 \\ 0\end{pmatrix}$ and  $\ve_2$ with $\begin{pmatrix}0 \\ 1\end{pmatrix}$ one can write
$$
\pi_\vt^+ u_{F(\vx)}\vw=F(\vx)\begin{pmatrix}b_1 \\ b_2\end{pmatrix} + \begin{pmatrix}a_1 \\ a_2\end{pmatrix}\,.$$
Since at least one of $a_i$, $b_j$ is a nonzero integer,  the row-nonplanarity of $(F,\nu)$ implies that for any ball $B\subset U$ with $\nu(B) > 0$
there exists $c > 0$ such that norms of the  above vectors, uniformly in $\vw\in\fw_1$ and $\vt\in\fa$, are not less than $c$ for some $\vx\in \supp\,\nu \cap B$. Hence Corollary \ref{cor: uniform lower} applies. 
Note that in this case the weaker assumption of part (b) was sufficient to draw the required conclusion.


For the case $\ell=2$, take
$$\vw=a\ve_1\wedge\ve_2+b_{11}\ve_1\wedge\vv_1+b_{12}\ve_1\wedge\vv_2+b_{21}\ve_2\wedge\vv_1+b_{22}\ve_2\wedge\vv_2+c\vv_1\wedge\vv_2\in\fw_2$$
and write
\eq{112}
{\begin{aligned}u_{F(\cdot)}\vw&=\big(a+b_{11}f_{21}+b_{12}f_{22}-b_{21}f_{11}-b_{22}f_{21}+c \det(F)\big)\ve_{\{1,2\}} \\
     &+(b_{11}-cf_{12})\ve_1\wedge\vv_1+(b_{12}+cf_{11})\ve_1\wedge\vv_2 \\
     &+(b_{21}-cf_{22})\ve_2\wedge\vv_1+(b_{22}+cf_{21})\ve_2\wedge\vv_2+c\vv_{\{1,2\}}\end{aligned}}
First let us describe the argument in case (b). When $\vt\in\fr$, that is, $t_1 = t_2 = t_3 = t_4$,
it is easy to see that all the elements $\ve_i \wedge \vv_{j}$ are in $E_\vt^+$. Let $\pi$ be the orthogonal 
projection of $\bigwedge^2(\R^4)$ onto the span of $\ve_1\wedge\vv_1$ and $\ve_2\wedge\vv_1$.
Identifying these with $\begin{pmatrix}1 \\ 0\end{pmatrix}$ and   $\begin{pmatrix}0 \\ 1\end{pmatrix}$,
one can write
$$
\pi (u_{F(\cdot)}\vw )=F(\cdot)\begin{pmatrix}0 \\ -c \end{pmatrix} + \begin{pmatrix}b_{11} \\ b_{21}\end{pmatrix}\,.$$
Thus the desired estimate holds whenever at least  one of $b_{11},b_{21},c$ is nonzero.
Otherwise, either $b_{12} = \langle u_{F(\cdot)}\vw ,\ve_1\wedge\vv_2\rangle$ or  $b_{22} = \langle u_{F(\cdot)}\vw ,\ve_2\wedge\vv_2\rangle$ or  $a = \langle u_{F(\cdot)}\vw ,\ve_{\{1,2\}}\rangle$ is a nonzero integer, and therefore the estimate  of Corollary \ref{cor: uniform lower} holds in this case as well.

Now turn to part (a). 
 It is not hard to see that for any $\vt\in\fa$, the
dimension
of $E_\vt^+\cap \bigwedge^2(\R^4)$ is at least three. Specifically, let $i_0$ be such that $t_{i_0} = \max_{i = 1,\dots,4}t_i$. If $i_0 \le 2$, then clearly $\ve_{i_0}\wedge \vv_i \in E_\vt^+$, $i = 1,2$, 
and otherwise $\ve_i \wedge \vv_{i_0-2}\in E_\vt^+$, $i = 1,2$. In addition, $\ve_{\{1,2\}}$ is clearly 
also always in $ E_\vt^+$. Without loss of generality let us assume that $i_0 = 1$ (the other cases
are treated similarly). Then both $\ve_1\wedge\vv_1$ and $\ve_1\wedge\vv_2$ belong to $E_\vt^+$,
so whenever  at least  one of $b_{11},b_{12},c$ is nonzero, the nonplanarity of $\big((f_{11}, f_{12}),\nu\big)$
implies the desired estimate.  Otherwise, the projection of $u_{F(\cdot)}\vw$ onto $\ve_{\{1,2\}}$ is equal to $a-b_{21}f_{11}-b_{22}f_{21}$, and one of $b_{21},b_{22},a$ is definitely nonzero; therefore the nonplanarity of  $\big((f_{11}, f_{21}),\nu\big)$ applies and finishes the proof.
\end{proof}

We would like to point out that the nonplanarity conditions of the above theorem
are as close to being optimal as the standard nonplanarity assumption on the pair $(\vf,\nu)$
in the case $\min(m,n) = 1$. Indeed, if the nonplanarity of   $(\vf,\nu)$ is 
violated by the existence of a nontrivial {\it integer\/} linear combination of $1,f_1,\dots,f_n$
vanishing on $B\,\cap \,\supp\,\nu$, then clearly every point of $\vf(B\,\cap \,\supp\,\nu)$ is \vwa.
Likewise, if a nontrivial integer linear combination of $1$ and the components of 
some row or   column of $F$ 
vanishes on $B\,\cap \,\supp\,\nu$, then  $\vf(B\,\cap \,\supp\,\nu)$   consists of VWMA matrices.
Indeed, if the above is the case for one of the rows of $F$, then $\Pi\big(F(\cdot)\vq + \vp\big) \equiv 0$
on $B\,\cap \,\supp\,\nu$;  the same conclusion for one of the columns
follows from the transference principle. Similarly,  if the row-nonplanarity assumption is 
violated by the existence of a nonzero integer vector $\vq\in\Z^n$ such that
the 
restriction of the map
$\vx\mapsto F(\vx) \vq$ to $B\,\cap \,\supp\,\nu$
is  a constant integer $\vp\in\Z^m$, then $ F(\cdot)\vq - \vp  \equiv 0$  on $B\,\cap \,\supp\,\nu$, 
and hence obviously $F(B\,\cap \,\supp\,\nu)$   consists of VWA matrices.

Of course there is a gap between vanishing of {\it all\/} linear combinatinons and non-vanishing
of a non-trivial  {\it integer\/} linear combination; a precise criterion (in the class of Federer measures
and good pairs) is likely to involve some \di\ conditions on the parameterizing coefficients of
the smallest affine subspace containing the image of $F$, similarly to the results of 
\cite{gafa, dima tams, yuqing},

\medskip

Looking at Theorem \ref{thm: twobytwo} one may wonder whether or not it is possible 
in general to derive
strong extremality or at least extremality of $F_*\nu$ 
from conditions involving just linear combinations
of rows/columns of $F$. This turns out not to be the case when $\max(m,n) > 2$. 
Indeed, as we have seen in Corollary \ref{cor: uniform zero}, linear growth of 
$g_\fr \Lambda_Y$  is implied by   vanishing of the projection of $ u_Y\vw$ for some $\vw\in\fw$
onto the space $E^+_\vt$, where $\vt\in\fr$ is arbitrary. Next we are going to show that such vanishing 
conditions can boil  down to higher degree polynomial relations between the columns (or rows) of $Y$.

For simplicity consider the case $n = 2$ (similarly one can treat the general case). Fix $t > 0$ and
$\vt =  (\frac tm, \ldots,
\frac tm, \frac t2,  
\frac t2)$, and observe that  \eq{span}{ \textstyle \bigwedge ^2 \,(\R^{m+2})\cap E^+_\vt\text{   is spanned by }\ve_i\wedge \ve_j, \ 
1\le j < j \le m\,.} Indeed, unlike   the case $m = 2$ considered in Theorem \ref{thm: twobytwo}, elements $\ve_i\wedge \vv_j$ are contracted by $g_\vt$, namely one has $g_\vt(\ve_i\wedge \vv_j) = e^{t(\frac 1n - \frac 12)}\ve_i\wedge \vv_j$. Denote by $\vy_1,\vy_2$ the columns of $Y$. 
Now take an arbitrary $\vw\in\fw_2$, and denote
by $W$ the  plane in $\R^{m+2}$ corresponding to $\vw$. Also denote 
 by $V$ the plane spanned by $\vv_1,\vv_2$ and by $E$ the span of $\{\ve_i : i = 1,\dots,m\}$. 
Clearly the following three cases can occur: the orthogonal projection of $W$ onto $V$ can have
dimension $1$, $2$ or $0$. In the latter case $\vw$ belongs to $E^+_\vt$ and is $u_Y$-invariant, 
therefore $\pi_\vt^+ u_Y\vw$ does not vanish. The other two cases are more interesting.
\medskip

\noindent{\bf Case 1.} If $W$ projects  onto a one-dimensional subspace of $V$, one can write
$\vw = \vv \wedge \vu$ where $\vv$, $\vu$ are nonzero integer vectors in $V$ and $E$ respectively.
In other words (identifying $E$ with $\R^m$ as before), 
$$ \vw = \vu\wedge (a\vv_1 + b\vv_2), \text{ where }\vu\in \Z^m\nz, \ (a,b)\in\Z^2\nz\,.$$
From \equ{actionbasis} and \equ{span} it then follows  that 
 $$
  \pi_\vt^+u_Y\vw = \pi_\vt^+ \vu\wedge \big(a(\vv_1+ \vy_1) + b(\vv_2 + \vy_2)\big) =  \vu\wedge (a \vy_1 + b \vy_2)\,.$$
\noindent{\bf Conclusion 1:} if a nontrivial integer linear combination of columns of $Y$ is proportional
to an integer vector, then $Y$ is very well approximable. In particular, if this happens for $Y = F(\x)$
with the coefficient of proportionality being a function of $\x$, then $F(\x)$ is VWA for every $\x$.
Consider for example 
$F(x) =  \left(
\begin{array}{ccccc}
x & x^2 + x^3 \\
x^2 & x + x^3 \\
x^3 & x + x^2
\end{array}\right)$.
Each  row (resp.,  column) of $F$ is a nondegenerate
polynomial map $\br\to\br^2$ (resp.,  $\br\to\br^3$). However $ F(x)$ is \vwa\ for every $x$,
since the sum of its columns is equal to $(x + x^2 + x^3)(\ve_1 + \ve_2 + \ve_3)$.
\medskip

\noindent{\bf Case 2.} In the generic situation, when the plane $W$ projects surjectively onto $V$,
using Gaussian reduction over integers, one can express $\vw$ 
as
$$
 \vw =  ( \vu_1 +a \vv_1)\wedge (\vu_2 + b\vv_2), \text{ where }\vu_1,\vu_2\in \Z^m\nz,\, a,b\in\Z\nz\,.$$
Then
 $$
  \pi_\vt^+u_Y\vw =   \pi_\vt^+ \big( \vu_1 + a(\vv_1+  \vy_1)\big)\wedge \big(\vu_2 + b(\vv_2 +  \vy_2)\big) =      ( \vu_1 + a\vy_1)\wedge (\vu_2  + b\vy_2)\,.$$
  \noindent{\bf Conclusion 2:} if a integer translate  of an integer multiple of a column  of $Y$ is proportional
to an integer  translate  of an integer multiple of the other column, then $Y$ is very well approximable.
For example matrices
$$F_1(x) =  \left(
\begin{array}{ccccc}
x & x^4 \\
x^2 & x^5 \\
x^3 & x^6
\end{array}\right)\text{ and } \ F_2(x) =   \left(
\begin{array}{ccccc}
x\  & 2x^2 + 3x \\
x^2\  & 2x^3 +2x^2 - x \\
x^3\  & 2x^4 +2x^3 + x
\end{array}\right)$$
are VWA for every $x$ (even though, as in the previous example, their rows and columns   are nondegenerate
polynomial maps). This is completely clear as far as $F_1$  is concerned -- its columns  are proportional. 
However it is far less obvious to understand the reason for the non-extremality of $(F_2)_*\lambda$,
namely, that $$2\begin{pmatrix}
x \\
x^2 \\
x^3
\end{pmatrix} + \begin{pmatrix}
1 \\
-1 \\
1
\end{pmatrix}\text{ and }\begin{pmatrix}
2x^2 + 3x  \\
2x^3 +2x^2 - x \\
2x^4 +2x^3 + x
\end{pmatrix} + \begin{pmatrix}
1 \\
-1 \\
1
\end{pmatrix}\text{ are proportional.} 
$$
It appears to be a challenging task to devise an algorithm which detects all the aforementioned
obstructions to extremality, say for matrices whose elements are integer polynomials in one real variable\footnote{Arguing similarly to the proof of  Theorem \ref{thm: twobytwo}, it is possible to show that the obstructions listed in Cases 1 and 2 above, together with the  linear
ones taken care of by assuming the row-nonplanarity of $F$ and $F^T$, can be used to generate a complete 
list of obstructions within the class of Federer measures and good pairs when $n = 2$ and $m = 3$. 
For higher dimensions the situation is more complicated, 
that is, one can produce 
non-trivial obstructions by considering $u_{Y}$-action on $\fw_p$ for $p\ge 3$.}. This is part of a vague general problem, asked in \cite[\S 9.1]{Gorodnik},  to describe general 
conditions which are sufficient for extremality or strong extremality and are `close to being optimal', in the sense of the  discussion after Theorem \ref{thm: twobytwo},  within certain class of maps. 
(The latter theorem, incidentally,  settles the problem for $m = n = 2$ in the class of Federer measures
and good pairs.) This circle of problems will be addressed in a forthcoming paper \cite{BKM}.

\section{Concluding remarks and open questions}
\name{concl}

\subsection{Improving Dirichlet's Theorem}\name{dt}
Another application of techniques developed in  this paper yields a generalization of a  theorem from \cite{KW}, which in its turn has generalized many earlier results.
The starting point for the general set-up of the problem is a multi-parameter form of Dirichlet's Theorem\footnote{In \cite{nimish mult} it was
referred to as Dirichlet-Minkowski Theorem.}: for any system  of  linear forms $Y_1,\dots,Y_m$
(rows of \amr)
and for any $\,\vt \in\fa
$
there exist solutions $\vq  = (q_1,\dots,q_n)\in \Z^n\nz$ and 
$\vp = (p_1,\dots,p_m) \in \Z^m$ 
of
\eq{mdt}{
\begin{cases}
|Y_i\vq - p_i| < e^{-t_i}\,,\quad &i = 1,\dots,m
 \\  
\ \ |q_j| \le e^{t_{m+j}}\,,\quad &j = 1,\dots,n
\,.
\end{cases}}
Then, given an unbounded subset $\mathcal{T}$ of $\fa
$
and positive $\vre < 1$, one says that
{\sl Dirichlet's Theorem can be $\vre$-improved for $Y$  along\/} $\mathcal{T}$, 
or $Y\in\DI_\vre(\mathcal{T})$,  
 if there is $T$ such that for every  $\vt =
(t_1,\dots,t_{m+n})\in\mathcal{T}$ with $t > T$, 
the  inequalities 
\eq{mdtw}{
\begin{cases}
|Y_i\vq - p_i| < \vre e^{-t_i}\,,\quad &i = 1,\dots,m
 \\  
\ \ |q_j| < \vre e^{t_{m+j}}\,,\quad &j = 1,\dots,n
\,,
\end{cases}
}
i.e., \equ{mdt} with the right hand side terms 
 multiplied by $\vre$,
 have nontrivial integer solutions. 
Using 
an elementary argument  
dating back to  Khintchine, one can show that 
for any $m,n$ and any unbounded $\mathcal{T}\subset\fa
$,  
$\DI_\vre(\mathcal{T})$ has Lebesgue 
measure zero as long as $\vre < 1/2
$. In \cite{KW} a similar statement was proved   for pushforwards of Federer  measures
to  $\R^n \cong M_{1,n}$ by continuous maps $\vf$. Namely, 
let 
$\nu$ be a  $D$-Federer 
measure  on
$\R^d$, $U\subset\br^d$ open, and   $\vf:U\to\br^n$  continuous such that 
the pair $(\vf,\nu)$ is 
$(C,\alpha)$-good and
nonplanar. Then it was proved in \cite[Theorem 1.5]{KW} that $ \vf_*\nu\big(\DI_\vre(\mathcal{T})\big) = 0$
for any unbounded $ 
\mathcal{T}\subset\fa$ and any $\vre < \vre_0$, where
$\vre_0$ depends only on $d,n,C,\alpha,D$. Note that here one needs a uniform version of 
the definition of a good pair: $(\vf,\nu)$ is said to be {\sl
\cag\/}   if for $\nu$-a.e.\ $x$
   there exists  a
neighborhood $U$ of $x$  
such that $(\vf,\nu)$ is 
 $(C,\alpha)$-good  on $U$. 
 
 We refer the reader to \cite{KW} and \cite{nimish} for a history of the 
 subject, which had been initiated in \cite{Davenport-Schmidt, Davenport-Schmidt2} for the case $\ft = \fr$, that is, dealing with Dirichlet's Theorem in its classical form. Also note that recent results of Shah \cite{nimish, nimish mult} show that in many cases, with $\nu = \lambda$ and $\vf$ real analytic, a similar
 result holds with $\vre_0 = 1$. 

\medskip
It turns out that a combination of methods of  \cite{KW} and the present paper can produce 
the following generalization to the case $\min(m,n) > 1$:

\begin{thm}\name{thm: digeneral} For any\,
$d,m,n\in\N$ and   
$\,C,\alpha,D > 0$ there exists
$\vre_0$
with the following property.
 Let $U$ be an open subset of
$\R^d$,     $F:U\to\mr$ continuous and
 $\nu$  a  
measure  on
$U$. 
Assume that $\nu$ is  $D$-Federer,  and $(\vd \circ F,\nu)$ is 
 $(C,\alpha)$-good and
 nonplanar.
Then 
$F_*\nu\big(\DI_\vre(\mathcal{T})\big) = 0$ 
 for any unbounded 
$\mathcal{T}\subset\fa$ and any $\vre < \vre_0$.
\end{thm}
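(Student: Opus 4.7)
The plan is to combine the dynamical characterization of $\DI_\vre$, the quantitative nondivergence Theorem \ref{thm: friendly nondivergence}, Mattila's density theorem for Radon measures, and a sharpening of Proposition \ref{prop: nonzero}, paralleling the strategy used in \cite{KW} in the vector case $m=1$.

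As a first step I would translate the problem dynamically: exactly as in Proposition \ref{KM - rate of escape condition}, unravelling definitions yields that $Y\in\DI_\vre(\ft)$ iff there is $T>0$ with $g_\vt\Lambda_Y\notin K_\vre$ for every $\vt\in\ft$ satisfying $t\ge T$, so that
$$F^{-1}\bigl(\DI_\vre(\ft)\bigr)\;=\;\bigcup_{T>0}\,\bigcap_{\vt\in\ft,\,t\ge T}\bigl\{\x\in U : g_\vt u_{F(\x)}\Z^k\notin K_\vre\bigr\}.$$
By the density theorem for Radon measures \cite[Corollary 2.14]{Mattila}, $F_*\nu(\DI_\vre(\ft))=0$ will follow once we produce, for $\nu$-a.e.\ $\x_0$, arbitrarily small balls $B\ni\x_0$ and a universal constant $c<1$ (independent of $\x_0$, $B$ and $\vt$) such that for some $\vt\in\ft$ of arbitrarily large $t$,
$\nu\bigl(\{\x\in B : g_\vt u_{F(\x)}\Z^k\notin K_\vre\}\bigr)\le c\,\nu(B)$;
the intersection over $\vt$ then inherits this bound, as does its monotone union in $T$. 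Such estimates are to come from Theorem \ref{thm: friendly nondivergence} applied to $h(\x)=g_\vt u_{F(\x)}$: Lemma \ref{lem: good} provides hypothesis (i) with constants $(N^{\alpha/2}C,\alpha)$ depending only on $m,n,C,\alpha$, and if hypothesis (ii) holds with $\rho=1$, then taking $\vre_0:=(2C')^{-1/\alpha}$ (with $C'$ the constant of Theorem \ref{thm: friendly nondivergence}, which depends only on $d,m,n,C,\alpha,D$) forces $c\le 1/2$ for every $\vre<\vre_0$, yielding the universal $\vre_0$ demanded by the statement.

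The main technical point lies in establishing the lower bound $\|g_\vt u_{F(\cdot)}\vw\|_{\nu,B}\ge 1$ uniformly in $\vw\in\fw$ for $\vt\in\ft$ of sufficiently large $t$. For $\vw\in E^+\cap\textstyle\bigwedge(\Z^k)$ primitive, a direct inspection of \equ{action} shows that $u_Y$ fixes each element of the basis \equ{basisplus} and hence acts trivially on $E^+$, while $g_\vt$ acts diagonally on \equ{basisplus} with eigenvalues $\ge 1$; thus $\|g_\vt u_{F(\x)}\vw\|=\|g_\vt\vw\|\ge\|\vw\|\ge 1$ automatically. For $\vw\notin E^+$ I would sharpen Proposition \ref{prop: nonzero} by exploiting the freedom its proof leaves in (i) choosing which nonzero coefficient $a_{I,J}$ to isolate and (ii) the subsidiary choice of $K\subset\{1,\dots,m\}\ssm I$ in Case~1 or $K\subset J$ in Case~2. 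Precisely, when $\vw\notin E^+$ there is always a nonzero $a_{I,J}$ with $J\ne\varnothing$ in Case~1 (respectively $I\ne\{1,\dots,m\}$ in Case~2), so $|K|\ge 1$ and one may choose $K$ to contain the index realizing $\max_i t_i$ (respectively $\max_j t_{m+j}$); this produces a $\vw_0$ in \equ{basisplus} for which the conclusion of Proposition \ref{prop: nonzero} holds and which in addition satisfies the eigenvalue estimate $\lambda(\vt,\vw_0)\ge t/\max(m,n)$. Using that $g_\vt$ is self-adjoint with $\vw_0$ an eigenvector, together with Lemma \ref{lem: nonpl} applied to the nonplanar pair $(\vd\circ F,\nu)$, we obtain
$$\|g_\vt u_{F(\cdot)}\vw\|_{\nu,B}\;\ge\;e^{\lambda(\vt,\vw_0)}\bigl|\langle u_{F(\cdot)}\vw,\vw_0\rangle\bigr|_{\nu,B}\;\ge\;e^{t/\max(m,n)}\,c_B,$$
where $c_B>0$ denotes the nonplanarity constant for the ball $B$. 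Hence $\|g_\vt u_{F(\cdot)}\vw\|_{\nu,B}\ge 1$ as soon as $t\ge\max(m,n)\log(1/c_B)$, a condition satisfied on an unbounded subset of any unbounded $\ft$. The hardest and most delicate step is this refinement of Proposition \ref{prop: nonzero}, which requires carefully tracking the combinatorial freedom in its proof to guarantee the eigenvalue bound simultaneously with the nontriviality of $\langle u_Y\vw,\vw_0\rangle$; once it is in place, the threshold in $t$ depends on the ball $B$ (and hence on $\x_0$), but the universal $\vre_0$ emerges from $C'$ alone, completing the proof via Mattila's theorem.
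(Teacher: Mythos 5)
Your proposal is correct and follows essentially the same route as the paper's argument: the dynamical reformulation of $\DI_\vre(\ft)$, quantitative nondivergence with $\rho=1$ plus Mattila's density theorem to extract a universal $\vre_0$ from $C'$, and — the key step — the sharpening of Proposition \ref{prop: nonzero} in which $\vw_0$ is chosen (for $\vw\notin\fw\cap E^+$, the case $\vw\in E^+$ being handled by the $u_Y$-invariance of $E^+$) so that $L$ contains the index realizing $\max_i t_i$, respectively avoids the index realizing $\max_j t_{m+j}$, yielding the eigenvalue bound $e^{t/\max(m,n)}$ and hence the uniform lower bound $\|g_\vt u_{F(\cdot)}\vw\|_{\nu,B}\ge 1$ for $t$ large. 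This is precisely the paper's scheme (its criterion for $\DI_\vre$-nullity combined with the proposition giving \equ{di suffcond}).
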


It can be shown, by combining the argument of \S\ref{indepvar} with \cite[Proposition 4.4]{KW},
that a uniform version of Proposition \ref{prop: indepvar} holds; that is,
for $\nu = \lambda$ and $F$ as in \equ{f} one can choose $C,\alpha$ such that
 $(\vd\circ F,\nu)$ is \cag; thus 
 the conclusion of the above theorem holds for $F_*\lambda$ as in Theorem 
  \ref{thm: strex}, with some positive $\vre_0$. 
  Details and further results along these lines will appear in a forthcoming paper. 
  It seems natural to conjecture that for $\nu = \lambda$ and real analytic $F$ such that
  $\vd\circ F$ is nonplanar an analogue of Shah's result holds, that is,  sets
  $\DI_\vre(\mathcal{T})$ are $F_*\lambda$-null for any $\vre < 1$.

\subsection{Inhomogeneous \di\ problems}\name{inhom}
A method allowing to transfer results on extremality and strong extremality of measures
on $\mr$ to inhomogeneous \da\  has been recently developed by Beresnevich and Velani in \cite{BV}. In the  inhomogeneous set-up, instead
of systems of linear forms given by \amr, one considers systems of {\it affine\/} forms $(Y,\vz)$, 
that is, maps $\vq\mapsto Y\vq + \vz$ where \amr\ and $\vz\in\R^m$. Generalizing the homogeneous setting by identifying $Y$ with $(Y,0)$, let us say that 
$(Y,\vz)$ is VWA if for some $\delta > 0$  there are infinitely many $\vq\in \Z^n$
such that $$\|
Y\vq + \vz - \vp\| < \|\vq\|^{ - n /
m - \delta} \text{ for some }\vp\in\Z^m\,, $$
and that it is VWMA if for some $\delta > 0$ there are infinitely many $\vq\in \Z^n$
such that $$\Pi(Y\vq  + \vz -\vp) < \Pi_{+}(\vq)^{-(1+\delta)} \text{ for some }\vp\in\Z^m\,.$$
From the Borel-Cantelli Lemma it is clear
 that for any $\vz\in\R^m$ the set
$${\rm VWMA}_\vz\df \{ Y\in\mr : (Y,\vz)\text{ is VWMA}\} $$
 has zero Lebesgue measure; and, since VWA obviously implies VWMA, the same is true for
$${\rm VWA}_\vz\df \{ Y\in\mr : (Y,\vz)\text{ is VWA}\}\,. $$
Following \cite{BV}, let us say that a measure $\mu$ on $\mr$ is {\sl inhomogeneously extremal\/} (resp.,  {\sl inhomogeneously strongly extremal\/})
if $\mu(\rm{VWA}_\vz) = 0$ for any $\vz\in\R^m$    (resp.,  $\mu(\rm{VWMA}_\vz) = 0\ \forall\,\vz\in\R^m$). 

One of the main results of  \cite{BV} is the following transference phenomenon: under some 
regularity conditions on $\mu$, the inhomogeneous  properties defined above are equivalent to their
(apriori weaker) homogeneous analogues. Specifically, Beresnevich and Velani define  the class of measures
on $\mr$ which they call {\sl contracting almost everywhere} and a subclass of measures
 {\sl strongly contracting  almost everywhere} (we refer the reader  to \cite{BV} for precise definitions). According to \cite[Theorem 1]{BV},  a (strongly) contracting  almost everywhere measure on $\mr$ 
is  (strongly) extremal if and only if it is inhomogeneously  (strongly) extremal. 
 Using this, \cite{BV} establishes inhomogeneous strong extremality of many measures proved earlier to be strongly extremal, such as
$\vf_*\lambda$ where $\vf$ is as in Theorem \ref{thm: strex}, or, more generally, arbitrary friendly measures  on $\R^n$.

As remarked at the end of \cite{BV}, `any progress on the homogeneous extremality 
problem can be transferred over to the inhomogeneous setting'. Indeed, many measures on $\mr$
discussed in the present paper can be shown to be strongly contracting almost everywhere.
Here is an example:
 suppose that for any $i = 1,\dots, m$ we are given a contracting measure $\mu_i$ on 
$\R^n$, where the latter space is  identified with the space of $i$th rows of  \amr. Then  it is clear from the definitions  that 
$\mu_1\times\dots\times \mu_m$ is strongly contracting. Therefore Theorem \ref{thm: strexanddi}
and the results of \cite{BV} imply

\begin{thm}\name{thm: inhstrex} Let $F$ be as in Theorem \ref{thm: strexanddi}. Then $F_*\lambda$
 is 
inhomogeneously   strongly extremal.
 \end{thm}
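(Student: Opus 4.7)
The strategy is to combine Theorem~\ref{thm: strexanddi} with the transference principle of Beresnevich--Velani \cite{BV}. By Theorem~\ref{thm: strexanddi} the measure $F_*\lambda$ is already known to be strongly extremal, so the only issue is to check that it falls within the regularity class for which \cite[Theorem 1]{BV} applies, i.e., that $F_*\lambda$ is strongly contracting almost everywhere. Once this is verified, the transference theorem of \cite{BV} immediately upgrades strong extremality to inhomogeneous strong extremality.

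First I would exploit the block structure of $F$. Identifying $M_{m,n}$ with $(\R^n)^m$ via the row decomposition used in \equ{f}, the pushforward factors as a product:
\[
F_*\lambda \;=\; (\vf_1)_*\lambda_1 \,\times\, (\vf_2)_*\lambda_2 \,\times\, \cdots \,\times\, (\vf_m)_*\lambda_m,
\]
where $\lambda_i$ is Lebesgue measure on $U_i\subset\br^{d_i}$. This is the reason the product reduction sketched in the paragraph preceding the theorem statement applies: it suffices to verify that each factor $(\vf_i)_*\lambda_i$ is a contracting measure on $\R^n$, because then \cite{BV} (as quoted in the paper) tells us that their product is strongly contracting almost everywhere.

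Next, for each $i$, I would verify that $(\vf_i)_*\lambda_i$ belongs to the contracting class of \cite{BV}. Since $\vf_i$ is smooth and nondegenerate, $(\vf_i)_*\lambda_i$ is a friendly measure on $\R^n$ (as pointed out in \S\ref{statements}: $\lambda_i$ is Federer, and $(\vf_i,\lambda_i)$ is good and nonplanar by \cite[Proposition 3.4]{KM} together with the immediate nonplanarity of nondegenerate maps). The fact that friendly pushforwards along nondegenerate maps are contracting in the sense of Beresnevich--Velani is recorded in \cite{BV}, where it is precisely this class that motivates their definitions; alternatively one can verify the contracting property directly from the nondegeneracy of $\vf_i$ and standard sublevel-set estimates for polynomial-like functions that underlie the Federer and good properties already used in \cite{KLW}.

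Putting the pieces together: each $(\vf_i)_*\lambda_i$ is contracting almost everywhere, hence $F_*\lambda$ is strongly contracting almost everywhere as a product, and $F_*\lambda$ is strongly extremal by Theorem~\ref{thm: strexanddi}; therefore \cite[Theorem 1]{BV} yields inhomogeneous strong extremality. The main obstacle I expect is verifying the contracting condition for each $(\vf_i)_*\lambda_i$ in the precise sense of \cite{BV}; this is where I would have to match definitions carefully and cite (or reproduce) the relevant lemma from \cite{BV} showing that nondegenerate pushforwards of Lebesgue measure meet their contracting axioms. Everything else in the argument is essentially a bookkeeping reduction exploiting the independent-variables structure of $F$.
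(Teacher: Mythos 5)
Your proposal follows exactly the route the paper takes: factor $F_*\lambda$ as the product $(\vf_1)_*\lambda_1\times\dots\times(\vf_m)_*\lambda_m$, note that each factor is a contracting measure on $\R^n$ by the results of \cite{BV} for nondegenerate pushforwards (equivalently, friendly measures), observe that a product of contracting measures is strongly contracting, and then combine the strong extremality from Theorem \ref{thm: strexanddi} with the transference theorem \cite[Theorem 1]{BV}. The only cosmetic difference is that the paper attributes the product-implies-strongly-contracting step to being ``clear from the definitions'' rather than to a lemma in \cite{BV}, but the argument is the same.
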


This motivates a problem of checking   contracting  and strongly contracting  properties of 
other measures on $\mr$ proved in the present paper to be extremal or strongly extremal. 
For example, it would be interesting to understand under what conditions on a smooth submanifold
of $\mr$ its Riemannan volume measure is (strongly) contracting (Theorem 4 of \cite{BV}
deals with the case  $n = 1$).

\subsection{What is next?}\name{other} Here is an incomplete list of other possible directions for further research:

\subsubsection{} Can one characterize extremal or strongly extremal affine subspaces of $\mr$
in the spirit  of \cite{gafa} which settled the problem for $\min(m,n) = 1$? Or, more generally, subspaces with a given \de\ following \cite{dima tams}?

\subsubsection{} Is it possible to obtain some Khintchine-type results for smooth submanifolds of $\mr$ with
$\min(m,n) > 1$? That is, study inequalities of type \equ{vwa} with a power  of the  norm of $ \vq $ in the right hand side
replaced by a general non-increasing function of $\|\vq\|$ satisfying the convergence or divergence conditions of the
Khintchine-Groshev Theorem. 
 Note that, as of now,  the convergence case of the problem is not settled even for nondegenerate submanifolds
of $M_{m,1}$ where $m > 2$; however  recent divergence theorems  \cite{B, BDV}  gives a hope of 
possible extensions to curves in the space of matrices. Likewise,  convergence-type results
of \cite{VV} for planar curves give a hope for a complete Khintchine-type theorem for smooth 
`sufficiently nondegenerate' (in the spirit of Theorem \ref{thm: twobytwo}) smooth curves in $M_{2,2}$.

\subsubsection{} Following \cite{KT} and \cite{Anish}, it should not be difficult to extend the results of the present paper
to metric \di\ problems over non-Archimedean local fields and their products.

\enlargethispage{3\baselineskip}

\begin{thebibliography}{BBKM}


\bibitem[B]{B} V.\ Beresnevich,
{\em  Rational points near manifolds and
metric Diophantine approximation},   {\tt arXiv:0904.0474}.



\bibitem[BD]{BD}
V.\,I.\ Bernik and M.\,M.\ Dodson, {\em Metric {D}iophantine
approximation on manifolds}, Cambridge University Press,
Cambridge, 1999.

%

\bibitem[BDV]{BDV} V.\ Beresnevich, H.\ Dickinson and S.\ Velani,
{\em  Diophantine approximation on planar curves and the distribution of rational points},
Ann.\ of Math.  {\bf 166} (2007), 
367--426. 




\bibitem[BKM]{BKM} V.\ Beresnevich, D.\ Kleinbock and G.\,A.\
Margulis, in preparation.



\bibitem[BV]{BV}
V.\ Beresnevich and S.\ Velani, {\em An Inhomogeneous Transference Principle and Diophantine Approximation},  {\tt arXiv:0802.1837}.


\bibitem[C]{Cassels}
J.\,W.\,S.\ Cassels,
      {\em  An introduction to {D}iophantine approximation},
Cambridge University Press, New York, 1957.



\bibitem[D1]{Dani:inv}
S.\,G.\ Dani,
{\em   On invariant measures, minimal
sets, and a lemma of Margulis},
      Invent.\ Math.\  {\bf 51} (1979), 239--260.



\bibitem[D2]{Dani:unip}
 \bysame,
{\em   On orbits of unipotent flows on homogeneous spaces},
      Ergodic Theory Dynam.\ Systems {\bf 4} (1984), 25--34.



\bibitem[D3]{Dani}  \bysame, {\em Divergent trajectories of flows on
homogeneous spaces and diophantine approximation}, J.\ Reine Angew.\
Math.\ {\bf
359} (1985), 55--89.



\bibitem[D4]{Dani:unip2}
 \bysame,
{\em   On orbits of unipotent flows on homogeneous spaces, {I}{I}},
      Ergodic Theory Dynam.\ Systems {\bf 6} (1986), 167--182.



\bibitem[Do]{Dodson} M.\,M.\ Dodson,  {\em   Hausdorff dimension, lower order and Khintchine's theorem in metric Diophantine approximation},   J.\ Reine Angew.\ Math.\ {\bf 432} (1992),  69--76.


\bibitem[DS1]{Davenport-Schmidt}  H.\ Davenport and W.\,M.\ Schmidt,
{\em Dirichlet's theorem on diophantine approximation}, in: Symposia
Mathematica, Vol.\ IV (INDAM, Rome, 1968/69),  pp.\ 113--132, 1970.



\bibitem[DS2]{Davenport-Schmidt2} \bysame,
{\em Dirichlet's theorem on diophantine approximation. II},  Acta
Arith.  {\bf 16}  (1969/1970) 413--424.



\bibitem[G]{Anish} A. Ghosh,  Metric Diophantine approximation over a local field of
positive characteristic,
    {\em J.\ Number Theory}, {\bf 124} (2007), 454--469.

\bibitem[Go]{Gorodnik} A.\ Gorodnik, {\em Open problems in dynamics and related fields},  J.\ Mod.\ Dyn.\  {\bf 1}  (2007),  no.\ 1, 1--35.


\bibitem[K1]{gafa}
D.\  Kleinbock,
  {\em Extremal subspaces and their
submanifolds}, Geom.\ Funct.\ Anal.\ {\bf 13} (2003), no.\  2,
437--466.



\bibitem[K2]{dima tams} \bname{dima tams}  \bysame, {\em  An extension of quantitative nondivergence 
and applications to 
Diophantine exponents}, Trans.\ Amer.\ Math.\  Soc.\  {\bf 360}  (2008), 6497--6523.


\bibitem[K3]{dima pamq} \bname{dima pamq}  \bysame, {\em Diophantine exponents of measures and homogeneous dynamics},
Pure Appl.\ Math.\ Q.\ {\bf 4} (2008), 81--97.

\bibitem[K4]{dichotomy} \bname{dichotomy}  \bysame, {\em An `almost all versus no' dichotomy in homogeneous dynamics and Diophantine approximation},
{\tt arXiv:0904.1614}.

\bibitem[KLW]{KLW} D.\ Kleinbock, E.\ Lindenstrauss and B.\
Weiss, {\em On fractal measures and diophantine approximation},
Selecta Math.\  {\bf 10} (2004), 479--523.


\bibitem[KM1]{KM}  D.\ Kleinbock and G.\,A.\
Margulis,   {\em Flows on homogeneous spaces and Diophantine
approximation on manifolds}, Ann.\ Math.\  {\bf 148} (1998),
339--360.



\bibitem[KM2]{loglaws}  \bysame, 
   {\em Logarithm laws for flows  on
homogeneous spaces},   Invent.\ Math.\ {\bf 138} (1999), 451--494.

\bibitem[Ko1] {Ko1} E.\ Kovalevskaya, {\em Simultaneously extremal manifolds}, Dokl.\ Akad. Nauk BSSR {\bf 31} (1987), 405--408 (Russian).

   \bibitem[Ko2] {Ko2}\bysame, {\em Jointly extremal manifolds}, Mat.\ Zametki  {\bf 41} (1987), 3Ð8.

   \bibitem[Ko3] {Ko3}\bysame, {\em Strongly jointly extremal manifolds}, Vests\i\  Akad.\ Navuk BSSR Ser.\ F\i z.-Mat.\ Navuk {\bf 6}  (1987), 16--19  (Russian).


\bibitem[KT]{KT} D.\ Kleinbock and G.\ Tomanov, {\em Flows on
$S$-arithmetic homogeneous spaces and applications  to metric
Diophantine approximation}, Comm.\ Math.\ Helv.\ {\bf 82}  (2007),   519--58.



\bibitem[KW1]{bad}
D.\ Kleinbock and B.\ Weiss, {\em Badly approximable vectors on
fractals}, 
Israel J.\ Math.\ {\bf 149} (2005), 137--170.






\bibitem[KW2]{KW} \bysame, {\em Dirichlet's
theorem on diophantine approximation and homogeneous flows},
 J.\ Mod.\ Dyn.\ {\bf 2} (2008),   43--62.



\bibitem[M]{mahler} K.\ Mahler,
 {\em \" Uber das Mass der Menge aller $S$-Zahlen}, 
 Math.\ Ann.\ {\bf 106}, (1932), 131--139.


\bibitem[Mar]{Mar:non-div}
G.\,A.\ Margulis,
{\em  On the action of unipotent groups in the space of lattices},
      in: {Lie groups and their representations}, Proc.\ Summer
School,
       Bolyai, J\'anos Math.\ Soc., Budapest, 1971),  365--370,
Halsted, New
       York, 1975.






\bibitem[MU]{MU}D.\ Mauldin and M.\ Urbanski, {\em The doubling property
of conformal measures of infinite iterated function systems}, 
J.\ Number Th.\ {\bf 102} (2003), 23--40.




\bibitem[PV]{PV} A.\ Pollington and S.\ Velani, {\em
Metric Diophantine approximation and `absolutely friendly' measures},
 Selecta Math.\ {\bf 11} (2005),
297--307.



\bibitem[R]{Rag} M.\,S.\ Raghunathan, {\em Discrete Subgroups
of Lie Groups}, Springer, Berlin, 1972.



\bibitem[Sh1]{nimish}\bname{nimish}  N.\,A.\ Shah,  {\em Equidistribution of expanding
translates of curves and Dirichlet's theorem on diophantine approximation},  {\tt arXiv:0802.3278}.


\bibitem[Sh2]{nimish mult}\bname{nimish mult} \bysame, {\em Expanding translates of curves and Dirichlet-Minkowski theorem on linear forms},  {\tt arXiv:0804.1424}. 




\bibitem[Sp1]{Sprindzuk-original}
V.\,G.\ Sprind{\v{z}}uk,
{\em More on {M}ahler's conjecture},
Dokl.\ Akad.\ Nauk SSSR, 155:54--56, 1964.


%
\bibitem[Sp2]{Sprindzuk-Mahler} \bysame, {\em Mahler's
problem in metric number theory}, Translated from the Russian by
B.\ Volkmann. Translations of Mathematical Monographs, Amer. Math.
Soc., Providence, R.I., 1969.

%
\bibitem[Sp3]{Sprindzuk-Uspekhi} \bysame,
{\em Achievements and problems in Diophantine approximation theory},
Russian Math.\ Surveys {\bf  35} (1980),  1--80.



\bibitem[SU]{Urbanski-Str}  B.\ Stratmann and M.\ Urbanski,
  {\em Diophantine extremality of the Patterson measure},
Math.\ Proc.\ Cambridge Phil.\ Soc.\  {\bf  140} (2006),
297--304.


\bibitem[SW]{SW} W.M.\, Schmidt and Y.\ Wang, {\em A note on a transference theorem of linear forms}, 
Sci.\ Sinica {\bf 22} (1979), 
276--280. 
 



\bibitem[U]{Urbanski} M.\ Urbanski,  {\em Diophantine
approximation of self-conformal measures}, J.\ Number Th.\
{\bf 110} (2005), no.\ 2, 219--235.


\bibitem[VV]{VV} R.\ Vaughan  and S.\ Velani,
{\em  Diophantine approximation on planar curves: the convergence theory},
 Invent.\ Math.\  {\bf 166}  (2006),  no.\ 1, 103--124.  


\bibitem[W]{junbo}\bname{junbo} J.\ Wang, {\em Diophantine approximation of linear forms}, Ph.D.\ Thesis, Brandeis University, 2008. 


\bibitem[Zh]{yuqing}\bname{yuqing} Y.\ Zhang, {\em Diophantine Exponents of Affine Subspaces: The Simultaneous Approximation Case}, 
{\tt arXiv:0809.0386}, J. Number Theory, to appear. 



\end{thebibliography}
\end{document}

\section{Improving Dirichlet's Theorem}
\name{dt}

A combination of methods of  \cite{KW} and the present paper produces
the following generalization to the case $\min(m,n) > 1$:

It can be shown, by combining the argument of \S\ref{indepvar} with \cite[Proposition 4.4]{KW},
 that for $\nu = \lambda$ and $F$ as in \equ{f} one can choose $C,\alpha$ such that
 $(\vd\circ F,\nu)$ will be \cag; thus \equ{result general} holds for $F_*\lambda$ as in Theorem 
  \ref{thm: strex}.

The proof of Theorem 
  \ref{thm: digeneral} is based on the reduction to dynamics which was already known to
  (and used by) Davenport and Schmidt. Namely, it is easy to show, see \cite[Proposition 2.1]{KW},
  that  This opens the door for the use of  
Theorem \ref{thm: friendly nondivergence}.    
Note that it is clear from the above proof that a lower bound on $\vre$ for which \equ{di measure zero} holds depends only on the constants $C,\alpha,D,k,d$ and not on $\ft$. 
   Therefore  Theorem  \ref{thm: digeneral} is a consequence of Theorem \ref{thm: di criterion}
   and the following

\begin{prop}\name{prop: di suffcond}   Let     an open subset  $U$  of $\R^d$, a  continuous map
  $F: U\to\mr
$
and 
 a   measure  $\nu$ on
$U$ be such that the pair $(\vd\circ F,\nu)$ is nonplanar. 
Then   for any  ball
$B\subset U$ with $\nu(B) > 0$ 
 there exists $T > 0$ such that 
  \eq{di suffcond}{\|g_\vt u_{F(\cdot)}\vw\|_{\nu,B} \geq 1 \quad\forall\,\vw\in\fw\text{ and } \forall\, \vt\in\fa\text{ with }t \ge T \,.}
\end{prop}

Indeed, then to satisfy  \equ{di condition} with $\ft$ being any unbounded subset of $\fa$, one can take
$\rho = 1$ and $\fs =   \{\vt\in\ft : t \ge T\}$. This proposition is (essentially) proved in \cite[\S3]{KW}
in the case $m = 1$; our proof is a generalization of the argument presented there.
   
\begin{proof}[Proof of Proposition \ref{prop: di suffcond}] We are going to proceed as in the proof
of Theorem \ref{thm: strexgeneral} and look at the projection of $g_\vt u_{F(\cdot)}\vw$ onto the `most
expanding' space $E^+$. Recall that we proved (see Proposition \ref{prop: nonzero}) that
 for any $\vw\in\fw$ it is possible to choose
a unit vector $\vw_0\in E^+$ such that 
\eq{nontriv}{\begin{aligned}\langle\pi^+ u_{Y}\vw, \vw_0\rangle\text{ is a nontrivial integer}\qquad\\\text{ linear combination 
of components of  }\vd(Y)\,.
\end{aligned}}
 From this and,
the non-planarity condition we derived the existence of a constant $c > 0$, dependent on $B$,
such that  $ \|u_{F(\cdot)}\vw\|_{\nu,B}\geq c \quad\forall\,\vw\in\fw$, and, since
$g_\vt$ does not contract $E^+$, 
the same  for  $ g_\vt u_{F(\cdot)}\vw$ in place of $u_{F(\cdot)}\vw$. However our goal now is to have a uniform bound independent on $B$,
and we are allowed to choose $\vt$ large enough ($t \ge T$ where $T$ may depend on $B$). 

First note that it is easily follows from \equ{actionbasis} or \equ{action} that 
the space $E^+$ is pointwise fixed by $u_Y$ for any $Y$. Thus the desired lower bound holds
when $\vw\in E^+$: indeed, for any $\vx$ and $\vt$ one has $\|g_\vt u_{F(\vx)}\vw\| \ge \| u_{F(\vx)}\vw\| = 
 \| \vw\| \ge 1$. Our plan is to  take an arbitrary $\vw\in\fw_\ell\ssm E^+$  and  strengthen Proposition \ref{prop: nonzero}
by showing that for any  $\vt\in\fa$  it is possible to choose
a unit vector $\vw_0\in E^+$ satisfying \equ{nontriv} and, in addition,
with \eq{bigeigenvalue}{\|g_\vt \vw_0\|\ge e^{t/\max(m,n)}\,.}  

 Without loss of generality we can assume that the components of $\vt$ are ordered so that
$t_1 = \max_{i = 1,\dots,m} t_i$ and $t_{m+1} = \max_{j = 1,\dots,n} t_{m+j}$.
This implies 
that $t_1\geq t/m$ and $t_{m+1}\geq t/n$ and allows one to choose elements of $E^+$
which are rapidly expanded by $g_\vt$. Indeed, if $L\subset \{1,\dots,m\}$ with 
 $1\in L$, then  one has
$$\Vert g_{\T}\ve_{L}\Vert\geq e^{t_1}\Vert\ve_{L}\Vert\geq
e^{t_{1}}\geq e^{t/m}\,,$$ and if $L\subset \{1,\dots,n\}$ with 
 $1\notin L$, then
$$\Vert g_{\T}(\ve_{ \{1,\dots,m\}}\wedge\vv_{L})\Vert\geq  e^{t_{m+1}}\Vert \ve_{ \{1,\dots,m\}}\wedge\vv_{L}\Vert\geq e^{t/n}\,.$$

 The rest of the argument splits into two cases, similarly to the proof of Proposition \ref{prop: nonzero}. 
\medskip

{\bf Case 1.} If 
$ \ell \le m$, write $\pi^+u_Y\vw$ as in \equ{case1}. Since $\vw\notin E^+$, we know that
  $a_{I,J} \ne 0$ for some $J\ne\varnothing$. Clearly it is possible to find $K\subset \{1,\dots,m\}\ssm I$
  with $|K|= |J|$ such that $L\df I\cup K$ contains $1$. Then 
  $$
  \langle\pi^+ u_{Y}\vw, \ve_L \rangle = \sum_{\substack{K\subset  L\\ 0 \le |K| \le \min(\ell,n)\le |I|\le \ell}}\  \sum_{\substack{J\subset  \{1,\dots,n\}\\\ |J| = |K|}}\pm a_{L\ssm K,J} y_{K,J}
  $$
  will be a nontrivial (since  $a_{I,J}$ is one of the coefficients) integer  linear combination 
of components of  $\vd(Y)$.
\medskip

{\bf Case 2.} If 
$ \ell \ge m$, write $\pi^+u_Y\vw$ as in \equ{case2}. Since $\vw\notin E^+$, we know that
  $ a_{\{1,\dots,m\}\ssm I,J} \ne 0$ for some nonempty  $I\subset\{1,\dots,m\}$. It remains to find $K\subset \ J$ with $|K|= |I|$ 
  such that $L\df J\ssm K$ does not contain  $1$. Then 
  $$
  \langle\pi^+ u_{Y}\vw, \ve_{\{1,\dots,m\}}\wedge\vv_L \rangle =  \sum_{\substack{I\subset  \{1,\dots,m\}\\  |I| \le   \min(m,k-\ell)}}\  \sum_{\substack{K\subset  \{1,\dots,n\}\ssm L\\ |K| =  |I|}}
\pm a_{\{1,\dots,m\}\ssm I,K\cup L} y_{ I,K}
  $$
  will be a nontrivial (since  $a_{\{1,\dots,m\}\ssm I,J}$ is one of the coefficients) integer  linear combination 
of components of  $\vd(Y)$.

  \medskip

We see that $\vw_0 =  \ve_L $ in Case 1 and $\vw_0 =  \ve_{\{1,\dots,m\}}\wedge\vv_L $ in Case 2
satisfies both \equ{nontriv} and \equ{bigeigenvalue}. Therefore  \equ{di suffcond} holds whenever
$ce^{T/\max(m,n)}\ge 1$, where $c$ is as in Lemma \ref{lem: nonpl} applied to $\vd\circ F$. \end{proof}

 \ignore{As with the problems involving linear growth of trajectories, to check  \equ{di condition} it will be
 helpful to    project
$\{u_{F(\x)}\vw\}$   onto subspaces expanded by the 
$g_\vt$-action. However we will need to study subspaces of a slightly different kind.
 Namely, for an unbounded subset $\fs$ of $\fa$ let us  denote by $E^+_\fs$ the direct sum of all 
the common eigenvectors $\vw$ of $g_\vt$, $\vt\in\fs$, such that \eq{conv}{g_\vt\vw\to\infty\text{ as }\vt\to\infty, \ \vt\in\fs\,.}
 For example if $\fs$ is a ray passing through a fixed $\vt$, the space  $E^+_\fs$ is spanned by 
elements $\ve_I \wedge \vv_J$ where 
$I \subset
\{1,\dots,m\}$ and $J \subset
\{1,\dots,n\}$ are such that $$\sum_{i\in I}t_{i} > \sum_{j \in J}t_{m+j}\,.$$
Note that it follows from the compactness of spheres in finite-dimensional spaces
that the convergence in \equ{conv} is uniform; in other words, 
\eq{unifconv}{\inf_{\vw\in E^+_\fs,\,\|\vw\| = 1}\|g_\vt\vw\|\to\infty\text{ as }\vt\to\infty, \ \vt\in\fs\,.}
We also let 
 $\pi^+_\fs$ be the orthogonal projection onto $E^+_\fs$. 
%
 The next corollary provides a checkable sufficient condition for the validity of 
\equ{di measure zero}:

\begin{cor}\name{cor: di uniform lower}   Let   
  $F: U\to\mr
$,
$\nu$ 
 and $\ft$ be as in  Theorem \ref{thm: di criterion}. 
 Suppose that  for any  ball
$B\subset U$ with $\nu(B) > 0$  there exists an  unbounded subset $\fs\subset \ft$ and positive  $c $ such that 
 \eq{di cor condition}{ \|\pi^+_\fs u_{F(\cdot)}\vw\|_{\nu,B} \ge c\ \text{ for all } \vw\in\fw,\, \vt\in\fs
 \,.}
 Then \equ{di measure zero} holds.
\end{cor}

\begin{proof} For any $B\subset U$ with $\nu(B) > 0$ take $\fs$ and   $c $ as above. Then for 
any $\vw\in\fw$ one can choose $\vx_\vw\in\supp\,\nu\cap B$ such that  $\|\pi^+_\fs u_{F(\vx_\vw)}\vw\|\ge c$. In view of \equ{unifconv} there exists $T > 0$ such that
$\|g_\vt\pi^+_\fs u_{F(\vx_\vw)}\vw\|\ge 1$ whenever $\vt\in\fs$, $t \ge T$. 
Since 
$
  \|g_\vt \vw\| \ge \|\pi^+_\fs g_\vt \vw\| =  \|g_\vt \pi^+_\fs \vw\|
  $ for any $\vw$, condition \equ{di condition} follows with $\rho= 1$ and $\fs$ replaced with 
  $\{\vt\in\fs : t \ge T\}$. \end{proof}

We finish the section by remarking that vanishing of $\pi^+_\vt u_{Y}\vw$ for some $\vw\in\fw$ and $\vt\in\ft$ 
easily  implies linear growth of $g_\ft \Lambda_Y$ under an additional assumption that $c\vt\in\ft$ for all $c > 0$ 
(clearly satisfied for $\ft = \fa$ or $\fr$). 
Indeed, suppose that $u_{Y}\vw$ belongs to the
orthogonal complement of $E_\vt^+$. Then for some 
$\beta > 0$ and $\vt$ as above one can write
$$
\|g_\vt u_{Y}\vw\| \le e^{-\beta t}\|u_{Y}\vw\| \le C e^{-\beta t}\,,
$$
where $C$ is a constant depending on $\vw$ and $Y$. The same is true with $\vt$ replaced by $c\vt$
for any $c > 0$, since $E_\vt^+ = E_{c\vt}^+$. In view of Lemma \ref{lem: higher lg} this forces
$g_\ft \Lambda_Y$ to have linear growth.}

\vfil\eject


\begin{thm}\name{thm: mbytwo} Suppose that
$Y\in M_{m,2}$ has rank $1$ (that is, its columns are  proportional). Then $g_\fr\Lambda_Y$ has linear growth   (that is, $Y$ is VWA).
\end{thm}


Consequently, the pushforward of Lebesgue measure by e.g.
$$F: x\mapsto \left(
\begin{array}{ccccc}
x & x^4 \\
x^2 & x^5 \\
x^3 & x^6
\end{array}\right) $$
is not extremal -- even though each  row (resp.,  column) of $F$ gives rise to a nondegenerate
polynomial map $\br\to\br^2$ (resp.,  $\br\to\br^3$). Similar examples exist in $\mr$ with both $m$ and
$n$ bigger than $2$.

\begin{proof}[Proof of Theorem \ref{thm: mbytwo}] We take $\vw = \vv_{\{1,2\}}$ and compute
the projection of $ u_Y\vw$ onto $E_\vt^+$, $\vt\in\fr$. Note that the latter space in this case is spanned by $\ve_{\{i,j\}}$, where $i,j \in\{1,\dots,m\}$, $i < j$. Using  \equ{actionbasis}, we see that
\begin{equation*}
\begin{split}
\pi_\vt^+ u_Y \vv_{\{1,2\}} &=  \pi_\vt^+\left(\vv_{1} + \sum_{i = 1}^m y_{i,1}\ve_i\right)\wedge \left(\vv_{2} + \sum_{i = 1}^m y_{i,2}\ve_i\right)\\ &= \left( \sum_{i = 1}^m y_{i,1}\ve_i\right)\wedge \left( \sum_{i = 1}^m y_{i,2}\ve_i\right) = 0
\end{split}
\end{equation*}
since the columns of $Y$ are proportional. The statement therefore follows from the remark at the end of \S\ref{nondiv}. \end{proof}

Thus one is left   with a problem   (in a  vague form  stated in \cite[\S 9.1]{Gorodnik}) of describing general conditions sufficient for etxremality and strong extremality of pushforwards (in the class of Federer measures
and good pairs) 
which are `close to being optimal' in the sense of the above discussion. Theorem \ref{thm: twobytwo}
settles the problem for $m = n = 2$.

Given  $0 < \vre < 1$, 
say that 
Dirichlet's Theorem {\sl can be $\vre$-improved\/} for \amr, 
and write $Y\in\DI_\vre(m,n)$, or  $Y\in\DI_\vre$ when the
dimensionality is clear from the context, 
if 
for every sufficiently
large $t$  one can find
$\vq  \in \Z^n\nz$ and 
$\vp \in \Z^m$ with
\eq{di}{
\|Y\vq - \vp\|
 < \vre e^{-t/m} 
  \ \ \ \mathrm{and}  
\ \ \|\vq\|
 < \vre  e^{t/n} 
\,,}
i.e., satisfy \equ{dt} with the right hand side terms 
 multiplied by $\vre$.  
One also says that 
$Y$ is   {\sl singular\/} if 
$Y\in\DI_\vre$
for any $\vre > 0$. The latter termonology was introduced
by  Khintchine who showed 
that Lebesgue-a.e.\ \amr is not singular.
Then   Davenport and Schmidt \cite{Davenport-Schmidt2}
proved that for any
$m,n\in\N$ and any $\,\vre < 1$, 
the sets  $\DI_\vre(m,n)$ 
have Lebesgue measure zero.
In another paper \cite{Davenport-Schmidt},  
they considered 
row
matrices
${\vf(x) = \begin{pmatrix}x & x^2\end{pmatrix}\in
M_{1,2}}$
and showed that
for any $\,\vre < 4^{-1/3}$, the set of $x\in \R$ for which 
$\vf(x)\in\DI_\vre(1,2)$ has zero Lebesgue measure. 
This  was subsequently extended by Baker, Bugeaud and others; namely, for some other smooth submanifolds of  $\R^n$ they exhibited
constants $\vre_0$ such that almost no points on these submanifolds
(viewed as row or column matrices) are in $\DI_\vre
$ for $\vre < \vre_0$. More details on the history of the subject can be found in \cite{KW, nimish}.

In this section we extend these and other results
of this flavor to measures on $\mr$ with $\min(m,n) > 1$. Similarly
to Theorem \ref{thm: strex} and following \cite{KW,  nimish mult}, we will do it in a   multi-parameter setting. 
Now,